\def\eqref#1{equation~\ref{#1}}
\def\Eqref#1{Equation~\ref{#1}}
\def\1{\bm{1}}
\def\rvw{{\mathbf{w}}}
\DeclareMathAlphabet{\mathsfit}{\encodingdefault}{\sfdefault}{m}{sl}
\SetMathAlphabet{\mathsfit}{bold}{\encodingdefault}{\sfdefault}{bx}{n}
\newcommand{\E}{\mathbb{E}}
\newcommand{\R}{\mathbb{R}}
\DeclareMathOperator*{\argmin}{arg\,min}
\DeclareMathOperator{\sign}{sign}
\newcommand{\stocG}{\mathsf{G}}
\newcommand{\stocF}{\mathsf{F}}
\DeclareMathOperator*{\dom}{dom}
\newcommand{\billy}[1]{}
\newcommand{\hyt}[1]{}
\newcommand{\sm}[1]{}
\newcommand{\rev}[1]{{\color{black}{#1}}}
\newtheorem{theorem}{Theorem}
\newtheorem{lemma}{Lemma}
\newtheorem{proposition}{Proposition}
\newtheorem{assumption}{Assumption}
\newtheorem{definition}{Definition}
\newtheorem{remark}{Remark}
\DeclareAcronym{MD}{
  short = MD,
  long = mirror descent} 
\DeclareAcronym{LMD}{
  short = LMD,
  long = learned mirror descent} 
\DeclareAcronym{ICNN}{
short = ICNN,
long = input-convex neural network} 
\DeclareAcronym{lsc}{
short = l.s.c.,
long = lower semi-continuous}
\title{Boosting Data-Driven Mirror Descent with Randomization, Equivariance, and Acceleration}
\author{\name Hong Ye Tan \email hyt35@cam.ac.uk \\
      \addr Department of Applied Mathematics and Theoretical Physics\\
      University of Cambridge, United Kingdom
      \AND
      \name Subhadip Mukherjee \email smukherjee@ece.iitkgp.ac.in \\
      \addr Department of Electronics and Electrical Communication Engineering \\
      Indian Institute of Technology, Kharagpur, India
      \AND
      \name Junqi Tang \email j.tang.2@bham.ac.uk\\
      \addr School of Mathematics \\
      University of Birmingham, United Kingdom
      \AND
      \name Carola-Bibiane Schönlieb \email cbs31@cam.ac.uk\\
      \addr Department of Applied Mathematics and Theoretical Physics \\
      University of Cambridge, United Kingdom}
\begin{document}

\maketitle

\begin{abstract}
Learning-to-optimize (L2O) is an emerging research area in large-scale optimization with applications in data science. Recently, researchers have proposed a novel L2O framework called learned mirror descent (LMD), based on the classical mirror descent (MD) algorithm with learnable mirror maps parameterized by input-convex neural networks. The LMD approach has been shown to significantly accelerate convex solvers while inheriting the convergence properties of the classical MD algorithm. This work proposes several practical extensions of the LMD algorithm, addressing its instability, scalability, and feasibility for high-dimensional problems. We first propose accelerated and stochastic variants of LMD, leveraging classical momentum-based acceleration and stochastic optimization techniques for improving the convergence rate and per-iteration computational complexity. Moreover, for the particular application of training neural networks, we derive and propose a novel and efficient parameterization for the mirror potential, exploiting the equivariant structure of the training problems to significantly reduce the dimensionality of the underlying problem. We provide theoretical convergence guarantees for our schemes under standard assumptions and demonstrate their effectiveness in various computational imaging and machine learning applications such as image inpainting, and the training of support vector machines and deep neural networks.
\end{abstract}

\section{Introduction}

Large-scale optimization plays a key role in modern data science applications. In such applications, we typically seek to infer a collection of parameters $x \in \mathbb{R}^n$ from $m$ i.i.d. data samples of a random variable $\{z_i\}_{i=1}^m$, by solving optimization problems of the form:
\begin{equation}\label{eq:varForm}
    x^\star \in \argmin_{x \in \mathbb{R}^n} f(x) := \frac{1}{m}\sum_{i=1}^m \left[f_i(x;z_i) + R(x)\right],
\end{equation}
where each $f_i$ is a fidelity function associated with the $i$-th data sample $z_i$, while $R(x)$ is a regularizer containing the prior knowledge of $x$. In the context of machine learning applications, $x$ would contain the parameters of some models such as SVMs and neural networks, while $z_i$ contains a pair of of training data samples and labels. For signal/image processing and inverse problems, $z_i$ is usually referred to as a pair of measurement operators and measurement data, with the measurement operator appearing in the fidelity $f_i$. Many classical variational image reconstruction models can also take the form of (\ref{eq:varForm}), with a simple example being total variation regularization \citep{rudin1992nonlinear}. In this case, we have $m=1$, where $f_1$ depends only on the corrupted image, and the total variation regularizer $R(x) = \lambda \|\nabla x\|_1$ is the $\ell_1$ norm of the discrete differences of $x$. In modern applications, the dimension of the optimization problems and the number of data samples involved are often large, leading to heavy demands on the development of computationally efficient optimization algorithms. 



Throughout the past decade, we have witnessed tremendous successes by researchers in the field of optimization in data science. In particular, efficient first-order algorithms utilizing momentum-based acceleration \citep{nesterov1983method, beck2009fast} and stochastic approximation \citep{robbins1951stochastic,zhang2004solving} have been proposed, with applications including sparse signal recovery and convex programming \citep{bubeck2015convex,ghadimi2016accelerated}. Recently, stochastic gradient algorithms with theoretically optimal convergence rates for solving generic classes of problems were developed \citep{lan2012optimal,lan2015optimal,defazio2016simple,allen2017katyusha, chambolle2018stochastic, tang2018rest, zhou2018simple,driggs2020spring}, matching complexity lower bounds for convex \citep{woodworth2016tight} or non-convex problems \citep{arjevani2023lower}.



Despite being theoretically optimal for generic optimization problem classes, stochastic gradient methods result in poor performance when applied to problems with particular structures. \cite{tang2020practicality} demonstrate that while optimal stochastic gradient methods perform very well in terms of convergence rates for certain imaging inverse problems such as X-ray CT, they perform poorly for some other problems such as image deblurring and compressed sensing. One important aspect is the fact that these methods are designed for some wide generic class of problems, while in practice, each practitioner may be only interested in a very narrow subclass of problems with a specific structure. The optimal algorithms for generic problems may still perform suboptimally for the specific subclasses of interest. 

\subsection{Learning-to-Optimize}

Learning-to-optimize (L2O), sometimes referred to as meta-learning in the machine learning literature, aims to overcome this limitation when dealing with more specialized problem classes, such as natural image reconstruction as a subset of all imaging problems. An underlying idea is to utilize the intrinsic structure of the optimization problems to converge faster on tasks from the same distribution. This has been shown to be effective for problems such as linear regression, training neural networks, and tomographic reconstruction, beating standard analytic methods \citep{andrychowicz2016learning,li2016learning,li2019learning,banert2020data}. The aim is to generate optimizers that (1) optimize in-distribution functions at a faster rate, and (2) possibly return a higher quality solution given the same computing budget \citep{chen2022learning}.

The L2O framework can be formally described as follows \citep{chen2022learning}. Given a learnable mapping function $g$ with parameters $\theta$, as well as a set $\mathcal{F}$ of training sample optimization problems $\min_x f(x)$ for $f \in \mathcal{F}$, the optimizer is learned to generate updates of the form $x_{k+1} = x_t - g_\theta(I_t)$. Here, $I_t$ represents previous information, such as the iterates $x_{t}, x_{t-1},...$ or their gradients $\nabla f(x_{t}),\nabla f(x_{t-1})$ etc. For example, gradient descent with can be formulated as $g_\theta(I_t) = \eta \nabla f(x_t)$, with a learnable step-size parameter $\eta$. This formulation of L2O is flexible in the sense that all previous information is available, which should theoretically allow for better informed steps. The parameters are usually learned by minimizing an objective of the form
\begin{equation}\label{eq:unrolledLoss}
    \min_\theta \mathbb{E}_{f \in \mathcal{F}} \left[\sum_{i=1}^T w_t f(x_t)\right], \quad x_{t+1} = x_t - g_\theta(I_t).
\end{equation}
This can be interpreted as minimizing the objective value of the evolved iterates, with weights $w_t$. Some standard choices include $w_t \equiv 1$, or $w_T =1, w_{T-1}=...=w_1 = 0$ \citep{zhou2018sc2net,andrychowicz2016learning}.

\subsection{Existing Theory in L2O}
Learning-to-optimize frameworks broadly fall into two categories: fully learned ``model-free'' methods, and ``model-based'' methods based on classical algorithms \citep{chen2022learning}. Model-free methods directly parameterize the optimizer updates $g_\theta(I_t)$ using neural networks, formulating fast optimization as an unrolled loss objectives of the form \Cref{eq:unrolledLoss} \citep{andrychowicz2016learning,li2016learning}. Recent advances for this paradigm include improving the generalization performance of both the optimizers and final optimization objective when used to train neural networks \citep{almeida2021generalizable,yang2023learning}, more stable computation \citep{metz2019understanding}\rev{, and learning to warm-start certain optimization methods \citep{sambharya2023learning}}. However, major drawbacks include lack of convergence guarantees, as well as requiring many training samples.


Model-based L2O methods instead inherit structure from classical optimization algorithms such as PDHG-like proximal splitting algorithms, where certain optimizer parameters such as step-sizes or momentum parameters are learned \citep{banert2020data,almeida2021generalizable,banert2021accelerated,gregor2010learning}. These methods are typically applied to convex problems rather than non-convex problems such as neural network training, which also allows for theoretical convergence analysis of the learned schemes. One advantage of building a L2O model from a classical optimization model is that convergence of the learned model can usually be derived by slightly modifying existing convergence results. A more in-depth overview of L2O can be found in \cite{chen2022learning}.

\rev{From a more statistical point of view, \cite{chen2023nonstochastic} formulates learning-to-optimize as a feedback control problem for smooth convex objective functions, showing an upper bound on the expected optimality gap in a broader class of stabilizing controllers. In the case where the algorithm class has small pseudo-dimension with respect to a set of problem instances, the empirical risk minimization problem admits statistical PAC bounds for the number of problem instances required \citep{gupta2016pac}. \cite{sambharya2023learning} demonstrates PAC-Bayes generalization bounds for certain classes of fixed-point optimization methods. A related notion is that of (semi-)amortized optimization, where a model maps an initialization to a solution with access to an objective $f$ \citep{amos2023tutorial}. }

\rev{Combining L2O with mirror descent has mainly been explored in the online setting, with theoretical contributions arising in terms of statistical transfer bounds and optimality gap bounds \citep{khodak2019adaptive,denevi2019online}. \cite{gao2022meta} also consider a simple version of coordinate-based mirror descent, providing a statistical PAC generalization bound. These are more suitable to common meta-learning benchmarks such as few-shot training for classification where convexity is unavailable \citep{finn2017model,nichol2018first}. These meta-learning methods that are focused on neural networks generally take a different approach to the L2O methods discussed above -- meta-learning accelerates by moving the initializations to ``better points'', in contrast to the L2O objective of learning an optimization trajectory (from any initialization). We consider a more general and less theoretically-backed framework where the meta-parameters are baked into the optimizer, rather than common instances such as initialization which are difficult to interpret in our context. }

A provable learning-to-optimize framework called learned mirror descent (LMD) was recently proposed in  \cite{tan2023data, tan2023robust} based on the classical mirror descent (MD) algorithm by \cite{nemirovsky1983problem}. By using neural networks to implicitly learn and model the geometry of the underlying problem class via mirror maps, LMD can achieve significantly faster convergence on certain convex problem classes, including model-based image inpainting and training support vector machines. Moreover, the LMD framework proposes approximate convergence results based on the accuracy of the mirror map inversion. 

Building upon the vanilla LMD framework by \cite{tan2023data}, we make the following contributions:
\begin{enumerate}[leftmargin=*]
    \item \textbf{Learned accelerated mirror descent and its theoretical analysis.} Our first contribution is developing the learned accelerated mirror descent (LAMD) scheme, which was initially proposed in the preliminary work of \cite{tan2023robust} without convergence analysis. In this work, we complete the theoretical analysis of the LAMD scheme under standard assumptions, demonstrating improved convergence rates over vanilla LMD. In particular, we demonstrate in \Cref{thm:AMDconvergence} that we obtain convergence to the minimum, as opposed to a constant above the minimum as found in \citet{tan2023data}.
    \item \textbf{Learned stochastic mirror descent and its acceleration.} We propose the learned stochastic mirror descent (LSMD) and the accelerated version (LASMD) in the spirit of stochastic optimization, which is crucial for the scalability of the LMD framework in large-scale problems prevalent in modern data science applications. We also provide theoretical convergence analysis of these schemes and demonstrate their effectiveness using numerical experiments including image processing, training of SVMs, and training neural networks for classification. 
    \item \textbf{Efficient mirror map parameterization by utilizing the equivariant structure.} \rev{We propose equivariant theory for L2O in the case where the target class of objectives satisfies some group symmetries.} For the training of deep neural networks, we utilize the equivariant structure of the training problems to significantly simplify the parameterization of the mirror map, further improving the efficiency and practicality of the LMD framework, and allowing for applications in a non-convex setting. We provide a derivation of this scheme, showing theoretically that learned optimizers trained under certain group invariance and equivariance assumptions will also satisfy a group invariance property, removing redundant parameters and reducing the problem dimensionality. We additionally utilize this to extend the LMD framework to the task of training deep and convolutional neural networks, achieving competitive performance with widely used and empirically powerful optimizers such as Adam.
\end{enumerate}

\subsection{Mirror Descent and Learned Mirror Descent}
Mirror descent (MD) was originally proposed by \citet{nemirovsky1983problem} as a method of generalizing gradient descent to general infinite-dimensional Banach spaces. The lack of Hilbert space structure and isomorphisms between $\mathcal{X}$ and their duals $\mathcal{X}^*$ prevents the use of gradient descent, which usually identifies the formal derivative $Df \in \mathcal{X}^*$ with the corresponding Riesz element $\nabla f \in \mathcal{X}$, satisfying $\langle Df, x \rangle_{\mathcal{X}^*, \mathcal{X}} = \langle \nabla f, x\rangle_{\mathcal{X}, \mathcal{X}}$. MD proposes to address this by directly updating elements in the dual space $\mathcal{X}^*$, and `tying' the dual space updates in $\mathcal{X}^*$ to corresponding primal space updates in $\mathcal{X}$ using a ``mirror map'', satisfying various conditions. 

Generalizing the classical Euclidean structure used for gradient descent to more general Bregman divergences, such as those induced by $L^p$ norms, can improve the dimensionality scaling of the Lipschitz constant. Having a smaller Lipschitz constant with respect to another norm compared to the Euclidean norm directly impacts the step size for which the methods converge, as well as the convergence rates. For example, for minimizing a convex function $f$ on the $n$-simplex $\Delta^n = \{x \in \R^{n} \mid x_i \ge 0,\, \sum_{i=1}^{n} x_i\}$, MD is able to achieve rates of $\mathcal{O}(\sqrt{\log n} \|f'\|_\infty/\sqrt{k})$, as opposed to (sub-)gradient descent which has rate $\mathcal{O}(\|f'\|_2/\sqrt{k})$ \citep{beck2003mirror,ben2001ordered}. The ratio $\|f'\|_2/\|f'\|_\infty$ can be as large as $\sqrt{n}$, leading to $(n/\log n)^{1/2}$ asymptotic speedup by using MD. 


\citet{gunasekar2021mirrorless} demonstrate further that mirror descent corresponds to a Riemannian gradient flow in the limiting case, and further generalize mirror descent to general Riemannian geometries. Moreover, the mirror descent framework is amenable to modifications similar to gradient descent, with extensions such as ergodicity \citep{duchi2012ergodic}, composite optimization \citep{duchi2010composite}, stochasticity \citep{lan2012validation,zhou2017stochastic} and acceleration \citep{accelMD15Walid,hovhannisyan2016magma}. In summary, the mirror descent algorithm allows us to utilize non-Euclidean geometry for optimization, which has been shown to accelerate convergence in applications including online learning and tomography \citep{srebro2011universality,raskutti2015information,allen2014linear,ben2001ordered,orabona2015generalized,zimmert2019connections}. 

We present the MD algorithm as given in \citet{beck2003mirror}, as well as the data-driven version of which this work is based, LMD \citep{tan2023data}. The LMD framework serves as a base for the accelerated and stochastic extensions that will be presented in this work. LMD aims to speed up convergence on a class of ``similar'' functions, which are taken to be qualitatively similar, such as image denoising on natural images or CT imaging \citep{banert2020data}. By using data to generate functions from the target function class, the geometry of the class can be learned using these sample functions. Mirror descent then exploits this learned geometry for faster convergence. Follow-up works additionally suggest that the learned geometry can be transferred to other mirror-descent-type algorithms without the need for retraining. Moreover, the learned mirror maps are robust to small perturbations in the forward operator such as from the identity to a blur convolution, as well as change of data sets \citep{tan2023robust}. 

We begin with some notation and definitions that will be used for mirror descent, as well as more standard assumptions for the MD framework.



Let $\mathcal{X}$ be the finite dimensional normed space $(\R^n, \|\cdot\|)$. Denote by $\mathcal{X}^* = \left((\R^n)^*, \|\cdot\|_*\right)$ the dual space, where the dual norm is defined as $\|p\|_* = \sup_x \{\langle p,x \rangle \mid \|x\|\le 1\}$, and the bracket notation denotes evaluation $\langle p, x\rangle = p(x) \in \R$. Let $\overline{\R} \coloneqq \R \cup \{+\infty\}$ denote the extended real line. Let $f:\mathcal{X} \rightarrow \overline{\R}$ be a proper, convex, and lower-semicontinuous function with well-defined subgradients. For a differentiable convex function $h:\mathcal{X} \rightarrow \R$, define the \emph{Bregman divergence} as $B_h(x,y) = h(x) - h(y) - \langle h'(y), x-y \rangle$. Let $g:\mathcal{X} \rightarrow \mathcal{X}^*$ be a function such that $g(x) \in \partial f(x)$ for all $x \in \mathcal{X}$. We denote a (possibly noisy) stochastic approximation to $g$ by 
\[\stocG(x, \xi) = g(x) + \Delta(x, \xi) + U(x),\]
where $\E_\xi[\Delta(x,\xi)] = 0$, and $U(x):\mathcal{X} \rightarrow \mathcal{X}^*$ represents dual noise. One standard assumption that we will use is that $\stocG$ has bounded second moments.

While we have defined $\mathcal{X}$ to be a vector space, this is not necessary. Indeed, suppose instead that $\mathcal{X}$ were a (proper) closed convex subset of $\R^n$. Assuming that the inverse mirror function maps into $\mathcal{X}$, i.e., $\nabla \psi^*:(\R^n)^* \rightarrow \mathcal{X}$, such as when $\nabla \psi(x) \rightarrow \infty$ as $x \rightarrow \partial \mathcal{X}$, then the proposed algorithms will still hold. We will restrict our exposition in this work to the simpler case where $\mathcal{X} = \R^n$ to avoid these technicalities, but the results can be extended.


\begin{definition}
    We define a \emph{mirror potential} to be an $\alpha$-strongly convex $\mathcal{C}^1$ function $\psi:\mathcal{X} \rightarrow \R$ with $\alpha > 0$. We define a \emph{mirror map} to be the gradient of a mirror potential $\nabla \psi:\mathcal{X} \rightarrow \mathcal{X}^*$.
\end{definition}

We further denote the convex conjugate of $\psi$ by $\psi^*:\mathcal{X}^* \rightarrow \R$. Note that under these assumptions, we have that $\nabla \psi^* = (\nabla \psi)^*$ on $\dom \psi$. Moreover, $\psi^*$ is $\alpha^{-1}$-smooth with respect to the dual norm, or equivalently, $\nabla \psi^*$ is $\alpha^{-1}$-Lipschitz \citep{aze1995uniformly}. 

For a mirror potential $\psi$ and step-sizes $(t_k)_{k=0}^\infty$, the mirror descent iterations of \citet{beck2003mirror} applied to an initialization $x^{(0)} \in \mathcal{X}$ are
\begin{equation}\label{eq:MDDef}
        x^{(k+1)} = \nabla \psi^* (\nabla \psi (x^{(k)}) - t_k \nabla f (x^{(k)})).
\end{equation}
The roles of $\nabla \psi:\mathcal{X} \rightarrow \mathcal{X}^*$ and $\nabla \psi^*:\mathcal{X}^* \rightarrow \mathcal{X}$ are to \emph{mirror} between the primal and dual spaces such that the gradient step is taken in the dual space. A special case is when $\Psi(x) = \frac{1}{2}\|x\|_2^2$, whereby $\nabla \psi$ and $\nabla \psi^*$ are the canonical isomorphisms between $\mathcal{X}$ and $\mathcal{X}^*$, and the MD iterations (\ref{eq:MDDef}) reduce to gradient descent. In the case where $f$ is $L$-Lipschitz and $\psi$ has strong convexity parameter $\alpha>0$, mirror descent is able to achieve \rev{optimality gap} bounds of the following form \citep{beck2003mirror}:
\begin{equation*}
    \min_{1\le k \le s} f(x^{(k)}) - f(x^*) \le \frac{B_\psi(x^*, x^{(1)}) + (2\alpha)^{-1} \sum_{k=1}^s t_k^2 \|\nabla f(x^{(k)})\|_*^2}{\sum_{k=1}^s t_k}.
\end{equation*}

LMD arises when we replace $\psi$ and $\psi^*$ with neural networks, which we can then learn from data. We denote learned variants of $\psi$ and $\psi^*$ by $M_\theta:\mathcal{X} \rightarrow \R$ and $M_\vartheta^*:\mathcal{X}^* \rightarrow \R$ respectively. Note that $M_\theta$ and $M_\vartheta^*$ are not necessarily convex conjugates of each other due to the lack of a closed-form convex conjugate for general neural networks -- we change the subscript to remind the reader of this subtlety. Equipped with this new notation for learned mirror maps, the learned mirror descent algorithm arises by directly replacing the mirror maps $\nabla \psi$ and $\nabla \psi^*$ with learned variants $M_\theta$ and $M_\vartheta^*$ respectively, where $\nabla M_\vartheta^*$ is enforced to be approximately $(\nabla M_\theta)^{-1}$ during training \citep{tan2023data}:
\begin{equation}\label{eq:LMDDef}
        x^{(k+1)} = \nabla M_\vartheta^* (\nabla M_\theta (x^{(k)}) - t_k \nabla f (x^{(k)})).
\end{equation}
The mismatch between $\nabla M_\vartheta^*$ and $(\nabla M_\theta)^{-1}$, also called the \emph{forward-backward inconsistency}, necessarily arises as the convex conjugate of a neural network is generally intractable. \citet{tan2023data} consider analyzing LMD as an approximate mirror descent scheme, where the error bounds depend on the distance between the computed (approximate) iterates (\ref{eq:LMDDef}) and the true mirror descent iterates (\ref{eq:MDDef}). Under certain conditions on the target function $f$ and the mirror potential $\psi = M_\theta$, they show convergence in function value up to a constant over the minimum \citep[Thm. 3.1, 3.6]{tan2023data}. A sufficient condition for this convergence is that the LMD iterations $x_i$ with inexact mirror maps given by \Cref{eq:LMDDef} is close to the exact MD iterations $\hat{x}^{(k+1)} = (\nabla M_\theta)^{-1} (\nabla M_\theta (x^{(k)}) - t_k \nabla f (x^{(k)}))$, in the sense that \rev{$\langle \nabla \psi(\hat{x}^{(i)}) - \nabla \psi(x^{(i)}), x - x^{(i)} \rangle$ and $\langle \nabla f(\hat{x}_i), x_i - \hat{x}_i \rangle$} are uniformly bounded for a given minimizer $x$. 


This work proposes to address two current restrictions of LMD. The first restriction is the approximate convergence and instability that arises due to the forward-backward inconsistency, such that the function values are only minimized up to a constant above the minimum. This is demonstrated in \citet[Thm. 3.1]{tan2023robust}, where LMD is also shown to diverge in later iterations as the forward-backward inconsistencies accumulate. \Cref{sec:AMD} presents an algorithm through which the constant can vanish if the forward-backward inconsistency is uniformly bounded. The second drawback is the computational cost of training LMD, which is prohibitive for expensive forward operators such as CT or neural network parameters. \Cref{sec:SMD,sec:ASMD} present stochastic and accelerated stochastic extensions of LMD that show convergence in expectation even with possibly unbounded stochastic approximation errors. We demonstrate these extensions with function classes considered in previous works in \Cref{sec:Experiments}, utilizing pre-trained neural networks on tasks such as image denoising, image inpainting and training support vector machines \citep{tan2023data,tan2023robust}. 

\Cref{sec:equivariant} introduces a method of reducing the dimensionality of the mirror maps without affecting the expressivity, by exploiting the symmetries in the considered functions, with applications to training mirror maps on neural networks. Moreover, the lower dimensionality allows us to consider parameterizations of mirror maps with exact inverses, bypassing the forward-backward inconsistency problem. We utilize equivariance to extend a simple version of LMD to train deep neural networks in \Cref{sec:experimentsNN}, where we demonstrate competitive performance with Adam on non-convex problem classes.

\section{Learned Accelerated MD}\label{sec:AMD}
In this section, we first present a mirror descent analog to the Nesterov accelerated gradient descent scheme \citep{nesterov1983method}. The resulting scheme, named accelerated mirror descent (AMD), can be considered as a discretization of a coupled ODE. \citet{accelMD15Walid} show accelerated convergence rates of the ODE, which translate to a discrete-time algorithm after a particular discretization. The analysis and convergence rates of AMD are similar to the ODE formulation for Nesterov accelerated gradient descent given in \citet{su2014differential}, where both papers show $\mathcal{O}(1/k^2)$ convergence rate of function value. 

By considering the discrete AMD iterations with inexact mirror maps, we then generalize this algorithm to the approximate case, given in \Cref{alg:ApproxAMD}, and provide the corresponding convergence rate bound. This can then be applied in the case where the forward and backward mirror maps are given by neural networks, leading to convergence results for learned accelerated mirror descent (LAMD). Our main contribution in this section is \Cref{thm:AMDconvergence}, which shows convergence of the function value to the minimum, rather than to the minimum plus a constant.

\begin{algorithm}
\caption{Accelerated mirror descent (AMD) with approximate mirror updates}\label{alg:ApproxAMD}
\begin{algorithmic}[1]
\Require $\tilde{x}^{(0)} = \tilde{z}^{(0)} = x^{(0)}$, step-sizes $t_k$, parameter $r \ge 3$

\For{$k \in \mathbb{N}$} 
    \State $x^{(k+1)} = \lambda_k \tilde{z}^{(k)} + (1-\lambda_k) \tilde{x}^{(k)}$ with $\lambda_k = \frac{r}{r+k}$
    \State $\tilde{z}^{(k+1)} = \nabla M_\vartheta^*(\nabla M_\theta(\tilde{z}^{(k)}) - \frac{kt_k}{r}\nabla f(x^{(k+1)}))$
    \State $\tilde{x}^{(k+1)} = \argmin_{\tilde{x} \in \R^n} \gamma t_k \left\langle \nabla f(x^{(k+1)}), \tilde{x} \right\rangle + R(\tilde{x}, x^{(k+1)})$ 
\EndFor
\end{algorithmic}
\end{algorithm}

Step 3 of \Cref{alg:ApproxAMD} is the mirror descent step of AMD, where $\tilde{z}^{(k)}$ represents a (computable) approximate mirror descent iteration. The additional step in Step 4 arises as a correction term to allow for convergence analysis \citep{accelMD15Walid}. Here, $R$ is a regularization function where there exists $0<\ell_R \le L_R$ such that for all $x, x' \in \mathcal{X}$, we have
\[\frac{\ell_R}{2} \|x-x'\|^2 \le R(x,x') \le \frac{L_R}{2} \|x-x'\|^2.\]
For practical purposes, $R$ can be taken to be the Euclidean distance $R(x,x') = \frac{1}{2} \|x-x'\|^2$, in which case $\ell_R = L_R = 1$. Taking $R$ of this form reduces Step 4 to the following gradient descent step:
\[\tilde{x}^{(k+1)} = x^{(k+1)} - \gamma t_k \nabla f(x^{(k+1)}).\]

Similarly to \citet{tan2023data}, we additionally define a sequence $\hat{z}^{(k)}$ that comprises the true mirror descent updates, applied to the intermediate iterates $\tilde{z}^{(k)}$:
\begin{equation}\label{eq:AMDExactMDIterate}
    \hat{z}^{(k+1)} = (\nabla M_\theta)^{-1} \left(\nabla M_\theta(\tilde{z}^{(k)}) - \frac{kt_k}{r} \nabla f(x^{(k+1)})\right).
\end{equation}

This additional variable will allow us to quantify the approximation error, with which we will show the approximate convergence. For ease of notation, we will denote the forward mirror potential by $\psi = M_\theta$. With this notation, $\psi^*$ is the convex conjugate, satisfying $\nabla \psi^* = (\nabla \psi)^{-1}$. The true mirror descent \rev{auxiliary sequence} can be written as $\hat{z}^{(k+1)} = \nabla \psi^* \left(\nabla \psi(\tilde{z}^{(k)}) - \frac{kt_k}{r} \nabla f(x^{(k+1)})\right)$. We begin with the following lemma, which bounds the difference between consecutive energies to be the sum of a negative term and an approximation error term. 

\begin{lemma}\label{lem:ConsecEnergyBd}
    Consider the approximate AMD iterations from \Cref{alg:ApproxAMD}, and let $\hat{z}^{(k)}$ be the exact MD iterates given by \Eqref{eq:AMDExactMDIterate}. Assume $\psi^*$ is $L_{\psi^*}$-smooth with respect to a reference norm $\|\cdot \|_*$ on the dual space, i.e. $B_{\psi^*}(z,y) \le \frac{L_{\psi^*}}{2}\|z-y\|_*^2$, or equivalently that $\nabla \psi^*$ is $L_{\psi^*}$-Lipschitz. Assume further that there exists $0<\ell_R \le L_R$ such that for all $x, x' \in \mathcal{X}$, $\frac{\ell_R}{2} \|x-x'\|^2 \le R(x,x') \le \frac{L_R}{2} \|x-x'\|^2$. Define the energy $\tilde{E}^{(k)}$ for $k \ge 0$ as follows (where $t_{-1} = 0$):
    \begin{equation}\label{eq:energy_AMD}
        \tilde{E}^{(k)} := \frac{k^2 t_{k-1}}{r} \left(f(\tilde{x}^{(k)}) - f^*\right) + r B_{\psi^*} \left( \nabla \psi(\tilde{z}^{(k)}), \nabla \psi(x^*)\right).
    \end{equation}
    Assume the step-size conditions $\gamma \ge L_R L_{\psi^*}$, and $t_k \le \frac{l_R}{L_f \gamma}$. Then the difference between consecutive energies satisfies the following:
    \begin{align*}
        \tilde{E}^{(k+1)} - \tilde{E}^{(k)} & \le \frac{(2k+1-rk)t_k}{r} \left(f(\tilde{x}^{(k+1)}) - f^*\right) - \frac{k^2(t_{k-1} - t_k)}{r} \left(f (\tilde{x}^{(k)}) - f^*\right)  \\
        & \qquad + r \left\langle \nabla \psi(\tilde{z}^{(k+1)}) - \nabla \psi(\hat{z}^{(k+1)}), \tilde{z}^{(k+1)} - x^* \right\rangle.
    \end{align*}
\end{lemma}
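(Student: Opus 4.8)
The plan is to expand $\tilde E^{(k+1)} - \tilde E^{(k)}$ into a ``function-value part'' and a ``Bregman part,'' and to process the Bregman part with two applications of the three-point identity for Bregman divergences. It is cleanest to work entirely in dual variables: write $p^{(k)} = \nabla\psi(\tilde z^{(k)})$, $\hat p^{(k+1)} = \nabla\psi(\hat z^{(k+1)})$, and $p^* = \nabla\psi(x^*)$, so that $\nabla\psi^* = (\nabla\psi)^{-1}$ gives $\nabla\psi^*(p^{(k)}) = \tilde z^{(k)}$, etc., and the exact update \eqref{eq:AMDExactMDIterate} reads $\hat p^{(k+1)} - p^{(k)} = -\tfrac{kt_k}{r}\nabla f(x^{(k+1)})$. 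The divergence term of the energy is then $r B_{\psi^*}(p^{(k)}, p^*)$.

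First I would peel off the error term. Inserting $\hat p^{(k+1)}$ into $B_{\psi^*}(p^{(k+1)},p^*) - B_{\psi^*}(p^{(k)},p^*)$ and applying the three-point identity to $(p^{(k+1)}, \hat p^{(k+1)}, p^*)$ produces $\langle \hat z^{(k+1)} - x^*,\, p^{(k+1)} - \hat p^{(k+1)}\rangle$ together with a nonnegative divergence. Since the statement demands the inner product against $\tilde z^{(k+1)}$ rather than $\hat z^{(k+1)}$, I would rewrite $\hat z^{(k+1)} = \tilde z^{(k+1)} + (\hat z^{(k+1)} - \tilde z^{(k+1)})$ and use $\langle\nabla\psi^*(a) - \nabla\psi^*(b), a-b\rangle = B_{\psi^*}(a,b) + B_{\psi^*}(b,a)$ to convert the correction into nonnegative Bregman terms; after cancellation this contributes exactly $r\langle \nabla\psi(\tilde z^{(k+1)}) - \nabla\psi(\hat z^{(k+1)}),\, \tilde z^{(k+1)} - x^*\rangle$ plus a term $-rB_{\psi^*}(\hat p^{(k+1)}, p^{(k+1)}) \le 0$ that I discard. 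Applying the three-point identity a second time to $(\hat p^{(k+1)}, p^{(k)}, p^*)$ and substituting the exact update yields $-kt_k\langle \nabla f(x^{(k+1)}),\, \tilde z^{(k)} - x^*\rangle$ plus $rB_{\psi^*}(\hat p^{(k+1)}, p^{(k)})$, which the $L_{\psi^*}$-smoothness of $\psi^*$ bounds above by $\tfrac{L_{\psi^*}k^2t_k^2}{2r}\|\nabla f(x^{(k+1)})\|_*^2$.

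It then remains to reconcile this with the function-value part and the target right-hand side. Using $\lambda_k = r/(r+k)$, the relation $x^{(k+1)} = \lambda_k\tilde z^{(k)} + (1-\lambda_k)\tilde x^{(k)}$ rearranges to $r(\tilde z^{(k)} - x^*) = r(x^{(k+1)} - x^*) + k(x^{(k+1)} - \tilde x^{(k)})$, so the cross term splits and convexity of $f$ applied twice gives $r\langle\nabla f(x^{(k+1)}), \tilde z^{(k)} - x^*\rangle \ge (r+k)f(x^{(k+1)}) - rf^* - kf(\tilde x^{(k)})$. Substituting and subtracting the target terms, all the $f(\tilde x^{(k)})$ and $f^*$ contributions cancel, and the whole estimate collapses to the single descent inequality $(k+r)\big(f(\tilde x^{(k+1)}) - f(x^{(k+1)})\big) \le -\tfrac{L_{\psi^*}kt_k}{2}\|\nabla f(x^{(k+1)})\|_*^2$. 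I would establish this from Step 4: $L_f$-smoothness of $f$ gives $f(\tilde x^{(k+1)}) - f(x^{(k+1)}) \le \langle\nabla f(x^{(k+1)}),\tilde x^{(k+1)}-x^{(k+1)}\rangle + \tfrac{L_f}{2}\|\tilde x^{(k+1)}-x^{(k+1)}\|^2$, while optimality of $\tilde x^{(k+1)}$ against the competitor $x^{(k+1)}$ (using $R(x,x)=0$) forces $\gamma t_k\langle\nabla f(x^{(k+1)}),\tilde x^{(k+1)}-x^{(k+1)}\rangle \le -R(\tilde x^{(k+1)},x^{(k+1)}) \le -\tfrac{\ell_R}{2}\|\tilde x^{(k+1)}-x^{(k+1)}\|^2$; the conditions $\gamma\ge L_RL_{\psi^*}$ and $t_k\le\ell_R/(L_f\gamma)$ then make the coefficients close. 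In the Euclidean case $R=\tfrac12\|\cdot\|^2$ the step is exactly $\tilde x^{(k+1)}-x^{(k+1)} = -\gamma t_k\nabla f(x^{(k+1)})$, and one verifies directly that $\gamma\big(1-\tfrac{L_f\gamma t_k}{2}\big)\ge\tfrac{L_{\psi^*}}{2}$, which holds because $t_k\le 1/(L_f\gamma)$ keeps the bracket at least $\tfrac12$ and $\gamma\ge L_{\psi^*}$. The case $k=0$ is degenerate, since every term carrying a factor $k$ vanishes and only $-rB_{\psi^*}(\hat p^{(1)},p^{(1)})\le 0$ survives.

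The main obstacle is the bookkeeping in the first three-point application: landing the error term on $\tilde z^{(k+1)}-x^*$ rather than $\hat z^{(k+1)}-x^*$, while ensuring every leftover divergence carries a sign that lets it be dropped. A secondary subtlety is the descent inequality for general $R$: converting the $-\|\tilde x^{(k+1)}-x^{(k+1)}\|^2$ decrease into the required $-\|\nabla f(x^{(k+1)})\|_*^2$ bound needs a matching lower bound on the step length, which is immediate in the Euclidean case but for general $R$ must be extracted from the optimality condition together with the two-sided quadratic bounds on $R$ and the appearance of $L_R$ in $\gamma\ge L_R L_{\psi^*}$.
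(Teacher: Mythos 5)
Your proposal is correct, but it takes a genuinely different route from the paper's proof. The paper (following Krichene et al.) applies the three-point identity once, directly to the consecutive computed iterates, keeps the negative divergence $-B_{\psi^*}(\nabla\psi(\tilde z^{(k)}),\nabla\psi(\tilde z^{(k+1)}))$ as a quadratic $-\tfrac{1}{2L_{\psi^*}}\|\tilde z^{(k+1)}-\tilde z^{(k)}\|^2$, tests the Step-4 optimality against the tailored competitor $d^{(k+1)}=\lambda_k\tilde z^{(k+1)}+(1-\lambda_k)\tilde x^{(k)}$, converts the resulting inner products with \emph{two} applications of the descent lemma (against $\tilde x^{(k)}$ and against $x^*$), and closes by checking that two coefficients $\alpha_k,\beta_k$ multiplying the quadratics are nonnegative under the step-size conditions. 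You instead route everything through the exact iterate $\hat z^{(k+1)}$: two three-point identities, the smoothness bound $rB_{\psi^*}\bigl(\nabla\psi(\hat z^{(k+1)}),\nabla\psi(\tilde z^{(k)})\bigr)\le \tfrac{L_{\psi^*}k^2t_k^2}{2r}\|\nabla f(x^{(k+1)})\|_*^2$, \emph{plain convexity} combined with the Step-2 identity $r(\tilde z^{(k)}-x^*)=r(x^{(k+1)}-x^*)+k(x^{(k+1)}-\tilde x^{(k)})$, and absorption of the residual gradient-norm term by the sufficient decrease of the correction step --- a linear-coupling-style argument. I verified your bookkeeping: the error term does land on $\tilde z^{(k+1)}-x^*$, the leftover divergences carry the claimed signs, the $f(\tilde x^{(k)})$ and $f^*$ contributions cancel exactly, and the whole lemma does reduce to $(k+r)\bigl(f(\tilde x^{(k+1)})-f(x^{(k+1)})\bigr)\le -\tfrac{L_{\psi^*}kt_k}{2}\|\nabla f(x^{(k+1)})\|_*^2$ (trivially $0\le 0$ at $k=0$). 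Your route makes the role of each step-size condition transparent; the paper's route handles general $R$ automatically through its choice of competitor.

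The one step you leave incomplete is the sufficient-decrease inequality for general $R$, and it is worth noting why your stated competitor does not suffice: testing Step-4 optimality against $x^{(k+1)}$ only yields $f(\tilde x^{(k+1)})\le f(x^{(k+1)})$, with no $\|\nabla f\|_*^2$ gain. The gap closes by testing against a gradient-step competitor instead. Pick $v$ with $\|v\|=\|\nabla f(x^{(k+1)})\|_*$ and $\langle\nabla f(x^{(k+1)}),v\rangle=\|\nabla f(x^{(k+1)})\|_*^2$, and set $x=x^{(k+1)}-\tfrac{\gamma t_k}{L_R}v$. The optimality of $\tilde x^{(k+1)}$ and the upper bound $R(x,x^{(k+1)})\le\tfrac{L_R}{2}\|x-x^{(k+1)}\|^2$ give
\begin{equation*}
    \gamma t_k\bigl\langle \nabla f(x^{(k+1)}),\,\tilde x^{(k+1)}-x^{(k+1)}\bigr\rangle + R(\tilde x^{(k+1)},x^{(k+1)}) \le -\frac{\gamma^2t_k^2}{2L_R}\|\nabla f(x^{(k+1)})\|_*^2.
\end{equation*}
Combining this with the descent lemma, the lower bound $R(\tilde x^{(k+1)},x^{(k+1)})\ge\tfrac{\ell_R}{2}\|\tilde x^{(k+1)}-x^{(k+1)}\|^2$, and $t_k\le \ell_R/(L_f\gamma)$ kills the $\|\tilde x^{(k+1)}-x^{(k+1)}\|^2$ terms and leaves $f(\tilde x^{(k+1)})-f(x^{(k+1)})\le -\tfrac{\gamma t_k}{2L_R}\|\nabla f(x^{(k+1)})\|_*^2\le -\tfrac{L_{\psi^*}t_k}{2}\|\nabla f(x^{(k+1)})\|_*^2$, using $\gamma\ge L_RL_{\psi^*}$; multiplying by $k+r\ge k$ (the left side being nonpositive) gives exactly the inequality your reduction requires. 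So the proposal is sound for general $R$ as well, with precisely the lemma's constants.
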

\begin{proof}
    Deferred to \Cref{app:lemConsecEnergyBd}.
\end{proof}
\Cref{lem:ConsecEnergyBd} gives us a way of telescoping the energy $\tilde{E}$. To turn this into a bound on the objective values $f(\tilde{x}^{(k)}) - f^*$, we need the initial energy. The following proposition gives a bound on the energy $\tilde{E}^{(1)}$, which when combined with telescoping with \Cref{lem:ConsecEnergyBd}, will be used to derive bounds on the objective.
\begin{proposition}\label{prop:AMDInitEnergyBd}
    Assume the conditions as in \Cref{lem:ConsecEnergyBd}. Then
    \begin{equation}
        \tilde{E}^{(1)} \le r B_{\psi^*}(\nabla \psi(x^{(0)}), \nabla \psi(x^*)) + \frac{t_0}{r} (f(x^{(0)}) - f^*) + r \left\langle \nabla \psi(\tilde{z}^{(1)}) - \nabla \psi(\hat{z}^{(1)}), \tilde{z}^{(1)} - x^*\right\rangle
    \end{equation}
\end{proposition}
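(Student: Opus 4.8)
The plan is to treat Proposition~\ref{prop:AMDInitEnergyBd} as the base case $k=0$ of the telescoping in \Cref{lem:ConsecEnergyBd}, which collapses because the zeroth gradient coefficient $kt_k/r$ vanishes. First I would record the two consequences of setting $k=0$ in \Cref{alg:ApproxAMD}. Since $\lambda_0 = r/(r+0) = 1$, Step~2 gives $x^{(1)} = \tilde{z}^{(0)} = x^{(0)}$; and since the coefficient $0\cdot t_0/r = 0$, the auxiliary sequence in \eqref{eq:AMDExactMDIterate} collapses to $\hat{z}^{(1)} = (\nabla M_\theta)^{-1}(\nabla M_\theta(x^{(0)})) = \nabla\psi^*(\nabla\psi(x^{(0)})) = x^{(0)}$. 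In particular $\nabla\psi(\hat{z}^{(1)}) = \nabla\psi(x^{(0)})$, while $\nabla\psi^*(\nabla\psi(\hat{z}^{(1)})) = x^{(0)}$ and $\nabla\psi^*(\nabla\psi(x^*)) = x^*$ by $\nabla\psi^* = (\nabla\psi)^{-1}$. Because $\tilde{E}^{(1)}$ splits into a function-value term $\frac{t_0}{r}(f(\tilde{x}^{(1)}) - f^*)$ and a Bregman term $r B_{\psi^*}(\nabla\psi(\tilde{z}^{(1)}), \nabla\psi(x^*))$, I would bound the two pieces separately.

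For the function-value term I would establish the descent inequality $f(\tilde{x}^{(1)}) \le f(x^{(0)})$. Testing the minimizing property of $\tilde{x}^{(1)}$ in Step~4 against the feasible point $x^{(1)} = x^{(0)}$, and using $R(x^{(0)},x^{(0)}) = 0$ together with the lower bound $R(\tilde{x}^{(1)}, x^{(0)}) \ge \frac{\ell_R}{2}\|\tilde{x}^{(1)} - x^{(0)}\|^2$, gives $\langle\nabla f(x^{(0)}), \tilde{x}^{(1)} - x^{(0)}\rangle \le -\frac{\ell_R}{2\gamma t_0}\|\tilde{x}^{(1)} - x^{(0)}\|^2$. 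Feeding this into the $L_f$-descent lemma for $f$ yields $f(\tilde{x}^{(1)}) \le f(x^{(0)}) + \tfrac{1}{2}\big(L_f - \tfrac{\ell_R}{\gamma t_0}\big)\|\tilde{x}^{(1)} - x^{(0)}\|^2$, and the bracketed coefficient is nonpositive precisely under the step-size hypothesis $t_0 \le \ell_R/(L_f\gamma)$.

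For the Bregman term I would apply the three-point identity $B_{\psi^*}(a,c) = B_{\psi^*}(a,b) + B_{\psi^*}(b,c) + \langle\nabla\psi^*(b) - \nabla\psi^*(c), a-b\rangle$ with $a = \nabla\psi(\tilde{z}^{(1)})$, $b = \nabla\psi(\hat{z}^{(1)}) = \nabla\psi(x^{(0)})$, $c = \nabla\psi(x^*)$. Using $\nabla\psi^*(b) = x^{(0)}$ and $\nabla\psi^*(c) = x^*$, this writes $B_{\psi^*}(\nabla\psi(\tilde{z}^{(1)}), \nabla\psi(x^*))$ as $B_{\psi^*}(\nabla\psi(x^{(0)}), \nabla\psi(x^*))$ plus a stray term $B_{\psi^*}(\nabla\psi(\tilde{z}^{(1)}), \nabla\psi(\hat{z}^{(1)}))$ plus $\langle\nabla\psi(\tilde{z}^{(1)}) - \nabla\psi(\hat{z}^{(1)}), x^{(0)} - x^*\rangle$. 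I would then absorb the stray term with the elementary bound $B_{\psi^*}(a,b) \le \langle\nabla\psi^*(a) - \nabla\psi^*(b), a-b\rangle$ (which follows from $B_{\psi^*}(a,b) + B_{\psi^*}(b,a) = \langle\nabla\psi^*(a)-\nabla\psi^*(b),a-b\rangle$ and $B_{\psi^*}(b,a) \ge 0$), giving $B_{\psi^*}(\nabla\psi(\tilde{z}^{(1)}), \nabla\psi(\hat{z}^{(1)})) \le \langle\nabla\psi(\tilde{z}^{(1)}) - \nabla\psi(\hat{z}^{(1)}), \tilde{z}^{(1)} - x^{(0)}\rangle$. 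Adding this to the leftover inner product recombines, by linearity in the second slot, into $\langle\nabla\psi(\tilde{z}^{(1)}) - \nabla\psi(\hat{z}^{(1)}), \tilde{z}^{(1)} - x^*\rangle$, exactly the error term in the statement. Multiplying the Bregman part by $r$ and combining with the function-value bound gives the claim.

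I expect the only genuine subtlety to be the bookkeeping in the Bregman step: one must take the middle point of the three-point identity to be $\nabla\psi(\hat{z}^{(1)})$ (not $\nabla\psi(\tilde{z}^{(1)})$) so that the leftover inner product has base point $x^{(0)}$, and then spend the elementary Bregman inequality to shift the base point from $x^{(0)}$ to $\tilde{z}^{(1)}$. The function-value half is routine once one observes that at $k=0$ the Step~4 update is a genuine descent step, which is exactly where the step-size condition $t_0 \le \ell_R/(L_f\gamma)$ is consumed.
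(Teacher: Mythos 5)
Your proof is correct, and it reaches the stated bound by a genuinely different route than the paper. The paper treats the proposition as an immediate corollary of \Cref{lem:ConsecEnergyBd}: applying that lemma at $k=0$ (where the coefficient $(2k+1-rk)t_k/r$ reduces to $t_0/r$ and $\tilde{E}^{(0)} = r B_{\psi^*}(\nabla\psi(x^{(0)}),\nabla\psi(x^*))$ since $\tilde{z}^{(0)}=x^{(0)}$, $t_{-1}=0$) delivers the Bregman and error terms for free, so the only remaining work is the inequality $f(\tilde{x}^{(1)}) \le f(x^{(1)}) = f(x^{(0)})$, proved via the argmin property of Step~4 and the descent lemma. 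You never invoke \Cref{lem:ConsecEnergyBd}; instead you exploit the full $k=0$ collapse --- not only $\lambda_0=1$ giving $x^{(1)}=x^{(0)}$, but also the vanishing coefficient $kt_k/r=0$ giving $\hat{z}^{(1)}=x^{(0)}$ --- and re-derive the Bregman half directly from the three-point identity with middle point $\nabla\psi(\hat{z}^{(1)})$, absorbing the stray divergence $B_{\psi^*}\bigl(\nabla\psi(\tilde{z}^{(1)}),\nabla\psi(\hat{z}^{(1)})\bigr)$ via $B_{\psi^*}(a,b) \le \langle \nabla\psi^*(a)-\nabla\psi^*(b),\, a-b\rangle$ so that the two inner products recombine into exactly the stated error term; this is, in effect, a self-contained replay of the paper's Lemma~1 computation in the degenerate case where the gradient step is absent. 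Your function-value half is the same idea as the paper's but more direct: you test the argmin against the base point $x^{(1)}=x^{(0)}$ itself rather than routing through $x^*$, and as a result you only consume the stated hypothesis $t_0 \le \ell_R/(L_f\gamma)$, whereas the paper's own chain invokes $2L_f/\ell_R \le 1/(\gamma t_0)$ (a factor two stronger than the lemma hypothesis provides) and its penultimate line has $\|x^{(1)}-x^*\|^2$ where $\|\tilde{x}^{(1)}-x^{(1)}\|^2$ is evidently intended. In short: the paper's route buys brevity and avoids duplicating Bregman bookkeeping; yours buys a self-contained elementary argument whose hypotheses match the statement exactly and which makes transparent why the error term carries the base point $\tilde{z}^{(1)}-x^*$.
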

\begin{proof}
    The proof relies on bounding $f(\tilde{x}^{(1)}) - f^*$, which is done identically to that in \cite{accelMD15Walid}. Deferred to \Cref{app:AMDInitEnergyBd}. 
\end{proof}

Putting together \Cref{lem:ConsecEnergyBd} and \Cref{prop:AMDInitEnergyBd}, we get the following theorem that bounds the objective value of the approximate AMD iterates.
\begin{theorem}\label{thm:AMDconvergence}
    Assume the conditions as in \Cref{lem:ConsecEnergyBd}. Assume also that the step-sizes $(t_k)_{k=0}^\infty$ are non-increasing, and that the approximation error term, given by 
    \begin{equation}\label{eq:AMDApproxErrorTerm}
        \left\langle \nabla \psi(\tilde{z}^{(k+1)}) - \nabla \psi(\hat{z}^{(k+1)}), \tilde{z}^{(k+1)} - x^*\right\rangle,
    \end{equation}
    is uniformly bounded by a constant $M>0$. If our step-sizes are chosen as $t_k = \Theta(k^{-c})$ for $c \in [0,1)$, then we get $\mathcal{O}(k^{c-1})$ convergence in objective value. 
\end{theorem}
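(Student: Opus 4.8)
The plan is to telescope the consecutive-energy inequality of \Cref{lem:ConsecEnergyBd} and close the recursion with the initial bound of \Cref{prop:AMDInitEnergyBd}. Summing the lemma over $k = 1, \ldots, N-1$ collapses the left-hand side to $\tilde{E}^{(N)} - \tilde{E}^{(1)}$, leaving on the right a sum of the two objective-gap terms plus the accumulated approximation-error terms.

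First I would argue that both objective-gap summands are non-positive and can be discarded. Since $r \ge 3$, for every $k \ge 1$ we have $2k + 1 - rk = (2-r)k + 1 \le 3 - r \le 0$, and because $f(\tilde{x}^{(k+1)}) \ge f^*$, the first summand $\frac{(2k+1-rk)t_k}{r}(f(\tilde{x}^{(k+1)}) - f^*)$ is $\le 0$. Since the step-sizes are non-increasing, $t_{k-1} - t_k \ge 0$ and $f(\tilde{x}^{(k)}) \ge f^*$, so the second summand $-\frac{k^2(t_{k-1}-t_k)}{r}(f(\tilde{x}^{(k)}) - f^*)$ is also $\le 0$. The only surviving contributions are the error terms \eqref{eq:AMDApproxErrorTerm}, each scaled by $r$ and bounded by $rM$; there are $N-1$ of them. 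Bounding $\tilde{E}^{(1)}$ via \Cref{prop:AMDInitEnergyBd} --- whose own error term is likewise $\le rM$ --- by the constant
\[
C_0 := r B_{\psi^*}(\nabla\psi(x^{(0)}), \nabla\psi(x^*)) + \frac{t_0}{r}(f(x^{(0)}) - f^*) + rM,
\]
I obtain $\tilde{E}^{(N)} \le C_0 + (N-1)rM \le C_0 + NrM$.

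To turn this into an objective-value rate I would discard the non-negative Bregman term in the definition \eqref{eq:energy_AMD} of the energy, giving $\tilde{E}^{(N)} \ge \frac{N^2 t_{N-1}}{r}(f(\tilde{x}^{(N)}) - f^*)$. Combining with the previous display and rearranging yields
\[
f(\tilde{x}^{(N)}) - f^* \le \frac{r C_0}{N^2 t_{N-1}} + \frac{r^2 M}{N t_{N-1}}.
\]
Substituting $t_{N-1} = \Theta(N^{-c})$ makes the first term $\mathcal{O}(N^{c-2})$ and the second $\mathcal{O}(N^{c-1})$; since $c < 1$ the latter dominates, so $f(\tilde{x}^{(N)}) - f^* = \mathcal{O}(N^{c-1}) \to 0$.

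The crux --- and the source of the improvement over the constant-gap result of \citet{tan2023data} --- is the balance between the linear-in-$N$ accumulation of the bounded error terms and the quadratic $N^2 t_{N-1}$ normalization supplied by acceleration: the quotient $NrM/(N^2 t_{N-1}) = \Theta(N^{c-1})$ still vanishes precisely because $c < 1$. I expect the only real care is in the sign bookkeeping, namely confirming that both telescoped objective-gap terms are sign-definite (which uses exactly the standing hypotheses $r \ge 3$ and monotone step-sizes) and that the uniform bound $M$ applies at every iterate; once these are settled the remainder is routine telescoping, with the step-size exponent $c$ propagating directly to the stated rate.
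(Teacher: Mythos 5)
Your proof is correct and follows essentially the same route as the paper's: telescope \Cref{lem:ConsecEnergyBd}, discard the two sign-definite objective-gap terms (using $r \ge 3$ and non-increasing step-sizes), bound the accumulated error contribution by $k r M$, drop the non-negative Bregman term in the energy, and divide by the $k^2 t_{k-1}$ factor to get the $\mathcal{O}(k^{c-1})$ rate. The only cosmetic difference is that you explicitly substitute the bound of \Cref{prop:AMDInitEnergyBd} to replace $\tilde{E}^{(1)}$ by an explicit constant $C_0$, whereas the paper simply carries $\tilde{E}^{(1)}$ as a finite constant in the final bound.
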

\begin{proof}
    Summing the expression in \Cref{lem:ConsecEnergyBd}, noting the first two terms are non-positive for $k\ge 1$ since $t_k$ is non-increasing and $r\ge 3$, and noting that the final term is bounded by $rM$,
    \begin{equation*}
        \tilde{E}^{(k+1)} \le \tilde{E}^{(1)} + krM.
    \end{equation*}
    By definition of $\tilde{E}$ and non-negativity of $B_{\psi^*}$, we also have
    \begin{equation}
        f(\tilde{x}^{(k+1)}) - f^* \le \frac{r}{(k+1)^2 t_k}\tilde{E}^{(k+1)} \le \frac{r}{(k+1)^2 t_k}[\tilde{E}^{(1)} + k rM] = \mathcal{O}(k^{c-1}).
    \end{equation}
\end{proof}

\rev{
\begin{remark}
    The condition that \Cref{eq:AMDApproxErrorTerm} is bounded can be interpreted in terms of more natural functional bound on the mirror potentials $M_\theta, M_\vartheta^*$. Indeed, the LHS is simply $(\nabla M_\theta \circ \nabla M_\vartheta^* - I) (\nabla M_\theta(\tilde{z}^{(k)}) - (kt_k/r) \nabla f(x^{(k+1)}))$. Therefore, if $\|\nabla M_\theta \circ \nabla M_\vartheta^* - I\|$ is uniformly bounded (over the iterates in the dual space) and the iterates are bounded, by Cauchy-Schwarz, the approximation error term \Cref{eq:AMDApproxErrorTerm} will also be bounded.
\end{remark}
}
This theorem shows that we are actually able to obtain convergence to the function objective minimum by using acceleration, as opposed to the minimum plus a constant given in \cite{tan2023data}. There are two main ideas at play here. One is that the acceleration makes the function value decrease fast enough, as seen by the energy given in \Eqref{eq:energy_AMD} growing as $\mathcal{O}(k)$, while having a $k^2 t_k$ factor in front of the objective difference term. The second idea is that the $\lambda_k$ factor in Step 2 of \Cref{alg:ApproxAMD} decreases the importance of the mirror descent step, which thus reduces the effect of the forward-backward inconsistency.



\section{Learned Stochastic MD}\label{sec:SMD}
Stochastic mirror descent (SMD) arises when we are able to sample gradients $\stocG(x^k, \xi^k)$ such that the expectation of $\stocG$ is equal to a subgradient of our convex objective $f$. For example, in large-scale imaging applications such as CT, computing the forward operator may be expensive, and a stochastic gradient may consist of computing a subset of the forward operator. Another example would be neural network training over large data sets, where there is insufficient memory to keep track of the gradients of the network parameters over all of the given data. Therefore, we wish to extend the analysis for approximate mirror descent to the case where we only have access to stochastic gradients. 

While it may be tempting to put the stochastic error into the approximation error terms of the previous analyses, stochastic errors may be unbounded, violating the theorem assumptions. This section will extend approximate MD to the case where the approximation error is now the sum of a bounded term and a zero-mean term with finite second moments. In particular, \Cref{thm:SMDconvergence} shows an expected \rev{optimality gap} bound depending on the stochastic noise based on bounded variance and bounded error assumptions.

We base the analysis of this section on the robust stochastic approximation approach \citep{nemirovski2009robust}. We require two additional assumptions in this setting as follows:

\begin{assumption}
    We can draw i.i.d. samples $\xi^{(0)}, \xi^{(1)},...$ of a random vector $\xi \in \Xi$.
\end{assumption}
\begin{assumption}
    An oracle $\stocG$ exists for which, given an input $(x, \xi) \in \mathcal{X}\times \Xi$, returns a vector $\stocG(x, \xi)$ such that $g(x) \coloneqq \mathbb{E}_\xi[\stocG(x, \xi)]$ is well defined and satisfies $g(x) \in \partial f(x)$.
\end{assumption}
If $f$ can be written as an expectation $f(x) = \mathbb{E}[\stocF(x, \xi)]$, where $\stocF(\cdot, \xi)$ is convex with $f$ having finite values in a neighborhood of a point $x$, then we can interchange the subgradient with the expectation \citep{strassen1965existence},
\begin{equation*}
    \partial f(x) = \mathbb{E}[\partial_x \stocF(x, \xi)].
\end{equation*}

Stochastic MD thus generalizes MD by replacing the subgradient $\nabla f$ at each step with this gradient oracle $\stocG$, which can be written as follows 
\begin{equation}\label{eq:SMDIter}
    x^{(k+1)} = \nabla \psi^* \left(\nabla \psi(x^{(k)}) - t_k \stocG(x^{(k)}, \xi^{(k)})\right).
\end{equation}
Observe that under this formulation, we can additionally encode a noise component that arises as a result of inexact mirror descent computation. Therefore, we may redefine $\stocG(x, \xi)$ as an inexact stochastic oracle as in the introduction, having two components
\begin{equation*}
    \stocG(x, \xi) = g(x) + \Delta(x, \xi) + U(x),
\end{equation*}
where $\Delta(\cdot, \xi)$ signifies stochastic noise satisfying $\mathbb{E}[\Delta(x)] = \mathbf{0}$, and $U(x)$ is a deterministic error to model the approximation error of computing MD steps. We will use the notation $\Delta_k = \Delta(x^{(k)}, \xi^{(k)}),\, U_k = U(x^{(k)})$ to signify these values at each iteration.

\begin{algorithm}
\caption{Stochastic mirror descent (SMD) \citep{nemirovski2009robust}}\label{alg:SMD}
\begin{algorithmic}[1]
\Require $x^{(0)}\in \mathcal{X}$, step-sizes $t_k > 0$, stochastic oracle $\stocG$, random i.i.d. $\xi^{(k)}$

\For{$k \in \mathbb{N}$}
    \State $x^{(k+1)} = \nabla \psi^* (\nabla \psi(x^{(k)}) - t_k \stocG(x^{(k)}, \xi^{(k)}))$
\EndFor
\end{algorithmic}
\end{algorithm}


We first reformulate the MD iterates into a ``generalized proximal'' form. In particular, a small modification of the argument in \citet{beck2003mirror} shows that the SMD iterations given in \Cref{eq:SMDIter} can be written as follows:
\begin{equation}\label{eq:smd_update_def1}
    x^{(k+1)} = \argmin_{x' \in \mathcal{X}} \left\{\left\langle \stocG(x^{(k)}, \xi^{(k)}), x'\right\rangle + \frac{1}{t_k} B_\psi(x', x^{(k)})\right\}.
\end{equation}
This can be written in terms of the \emph{prox-mapping} $P_x:(\R^n)^* \rightarrow \mathcal{X}$, defined as follows:
\begin{subequations}
\begin{equation}\label{eq:nemi_prox_mapping}
    P_x(y) = \argmin_{x' \in \mathcal{X}} \left\{\langle y,x' \rangle + B_\psi(x',x)\right\},
\end{equation}
\begin{equation}\label{eq:proxmapping_formulation}
    x^{(k+1)} = P_{x^{(k)}}\left(t_k \stocG(x^{(k)}, \xi^{(k)})\right).
\end{equation}
\end{subequations}

\rev{The following result gives optimality gap bounds of approximate SMD in the deterministic-plus-stochastic noise setting. In particular, the expected ergodic average is able to attain a loss that is a constant over the minimum.}

\begin{theorem}\label{thm:SMDconvergence}
    Consider the approximate SMD iterations $x^{(i)}$ generated by \Eqref{eq:SMDIter}. Suppose that the stochastic oracle satisfies $\mathbb{E}[\|\stocG(x, \xi)\|_*^2] \le \sigma^2$ for all $x \in \mathcal{X}$ for some $\sigma \ge 0$. Let $x^*$ be some minimizer of $f$.
    \begin{enumerate}
        \item If $\langle U_i, x^{(i)} - x^* \rangle$ is uniformly bounded by $C$, then the expected loss satisfies 
        \begin{equation}
            \mathbb{E}\left[\sum_{i=0}^k t_i [f(x^{(i)}) - f^*]\right] \le B_\psi(x^*, x^{(0)}) + \frac{\sigma^2}{2\alpha}\sum_{i=0}^k t_i^2 + C\sum_{i=0}^k t_i.
        \end{equation}
        \item If $f$ is $\mu$-strongly convex and $\|U_i\|_*^2$ is uniformly bounded by $C'$, the expected loss satisfies
        \begin{equation}
            \mathbb{E}\left[\sum_{i=0}^k t_i [f(x^{(i)}) - f^*]\right] \le B_\psi(x^*, x^{(0)}) + \frac{\sigma^2}{2\alpha}\sum_{i=0}^k t_i^2 + \frac{C'}{2\mu}\sum_{i=0}^k t_i.
        \end{equation}
    \end{enumerate}
    In particular, the ergodic average defined by 
        \begin{gather*}
            \tilde{x}_0^k = \sum_{i=0}^k \gamma_i x^{(i)}, \quad 
            \gamma_i = \frac{t_i}{\sum_{i=0}^k t_i}
        \end{gather*}
    satisfies respectively 
    \begin{equation}\label{eq:ergodicAvg1}
        \mathbb{E}\left[f(\tilde{x}_0^k) - f^*\right] \le \left(B_\psi(x^*, x^{(0)}) + \frac{\sigma^2}{2\alpha}\sum_{i=0}^k t_i^2\right)\Big/\left(\sum_{i=0}^k t_i\right) + C,
    \end{equation}
    \begin{equation}\label{eq:ergodicAvg2}
        \mathbb{E}\left[f(\tilde{x}_0^k) - f^*\right] \le \left(B_\psi(x^*, x^{(0)}) + \frac{\sigma^2}{2\alpha}\sum_{i=0}^k t_i^2\right)\Big/\left(\sum_{i=0}^k t_i\right) + \frac{C'}{2\mu}.
    \end{equation}
\end{theorem}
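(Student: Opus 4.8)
The plan is to follow the robust stochastic approximation template of \citet{nemirovski2009robust}, modified so that the deterministic error $U_k$ is accounted for separately from the zero-mean stochastic error $\Delta_k$. The backbone is a one-step prox inequality for the mirror update. Starting from the prox-mapping form in \Cref{eq:proxmapping_formulation}, the first-order optimality condition together with the three-point identity for Bregman divergences gives, for the comparator $x^*$,
\[
t_k \langle \stocG_k, x^{(k)} - x^*\rangle \le B_\psi(x^*, x^{(k)}) - B_\psi(x^*, x^{(k+1)}) + \frac{t_k^2}{2\alpha}\|\stocG_k\|_*^2,
\]
where $\stocG_k = \stocG(x^{(k)}, \xi^{(k)})$ and the quadratic noise term arises by bounding $\langle t_k\stocG_k, x^{(k)} - x^{(k+1)}\rangle$ via Cauchy--Schwarz and absorbing $\frac{\alpha}{2}\|x^{(k+1)} - x^{(k)}\|^2$ using $\alpha$-strong convexity of $\psi$.

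Next I would substitute $\stocG_k = g_k + \Delta_k + U_k$ with $g_k \in \partial f(x^{(k)})$ and use convexity of $f$, namely $\langle g_k, x^{(k)} - x^*\rangle \ge f(x^{(k)}) - f^*$, to turn the left-hand side into an objective gap. Summing the resulting per-iteration inequality over $i = 0, \dots, k$ telescopes the Bregman terms down to $B_\psi(x^*, x^{(0)})$ (discarding the nonnegative $B_\psi(x^*, x^{(k+1)})$), and the variance hypothesis $\E[\|\stocG_i\|_*^2] \le \sigma^2$ contributes $\frac{\sigma^2}{2\alpha}\sum_i t_i^2$ in expectation.

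The stochastic cross-term vanishes by a filtration argument: since $x^{(i)}$ is measurable with respect to $\mathcal{F}_{i-1} = \sigma(\xi^{(0)}, \dots, \xi^{(i-1)})$ while $\E_\xi[\Delta(x^{(i)}, \xi)] = 0$, the tower property gives $\E[\langle \Delta_i, x^{(i)} - x^*\rangle] = 0$ without any boundedness assumption on $\Delta_k$. The deterministic term is where the two cases diverge. Case~1 bounds $-\langle U_i, x^{(i)} - x^*\rangle \le C$ directly from the hypothesis, yielding the $C\sum_i t_i$ contribution. Case~2 instead invokes the sharper $\mu$-strong convexity bound $\langle g_k, x^{(k)} - x^*\rangle \ge f(x^{(k)}) - f^* + \frac{\mu}{2}\|x^{(k)} - x^*\|^2$ and then applies Young's inequality, $-\langle U_k, x^{(k)} - x^*\rangle \le \frac{1}{2\mu}\|U_k\|_*^2 + \frac{\mu}{2}\|x^{(k)} - x^*\|^2$, so that the two $\frac{\mu}{2}\|x^{(k)} - x^*\|^2$ terms cancel and only $\frac{1}{2\mu}\|U_k\|_*^2 \le \frac{C'}{2\mu}$ survives. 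Finally, both ergodic bounds follow from Jensen's inequality applied to the convex combination $\tilde{x}_0^k = \sum_i \gamma_i x^{(i)}$, giving $f(\tilde{x}_0^k) - f^* \le \sum_i \gamma_i[f(x^{(i)}) - f^*]$, after dividing the summed bound by $\sum_i t_i$.

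I expect the main obstacle to be the careful handling of the deterministic error $U_k$ in the strongly convex regime --- ensuring the strong-convexity slack exactly matches the Young's-inequality penalty so the $\|x^{(k)} - x^*\|^2$ terms cancel --- together with verifying the measurability that lets the conditional-expectation step eliminate $\Delta_k$ without assuming it is bounded.
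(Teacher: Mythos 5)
Your proposal is correct and follows essentially the same route as the paper: the same one-step prox inequality (the paper cites it as Lemma~2.1 of \citet{nemirovski2009robust}, which you re-derive inline via the three-point identity, Cauchy--Schwarz, and $\alpha$-strong convexity of $\psi$), the same decomposition $\stocG = g + \Delta + U$ with convexity turning the inner product into an objective gap, the same tower-property argument killing the $\Delta_i$ cross-terms, the same Young's-inequality cancellation against the $\mu$-strong-convexity slack in Case~2, and Jensen for the ergodic average. No gaps to report.
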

\begin{proof}
    Deferred to \Cref{app:SMD}. 
\end{proof}
\begin{remark}
    A stronger assumption to condition (1) in \Cref{thm:SMDconvergence} is to assume uniformly bounded iterates, as well as uniformly bounded deterministic errors $U_i$. In LMD, deterministic errors arise due to mismatches in the learned mirror maps $\nabla M_\vartheta^* \approx (\nabla M_\theta)^{-1}$, which can be controlled using a soft penalty.
\end{remark}

We observe in \Cref{eq:ergodicAvg1,eq:ergodicAvg2} that $\ell_2$--$\ell_1$ summability (where $\sum t_i^2 < +\infty$ but $\sum t_i = +\infty$) is a sufficient condition to remove the contribution of the first term to the ergodic average error. This is consistent with the empirical observation in \cite{tan2023robust} that extending the learned step-sizes using a reciprocal rule $t_k = c/k$ gives the best convergence results. 

\section{Learned Accelerated Stochastic MD}\label{sec:ASMD}

In the previous sections, we considered accelerated and stochastic variants of mirror descent and presented results pertaining to the convergence of these algorithms with noisy mirror maps. These two variants improve different parts of MD, with acceleration improving the convergence rates, while stochasticity improves the computational dependency on the gradient. Indeed, one of the drawbacks of SMD is the slow convergence rate of $\mathcal{O}(1/\sqrt{k})$ in expectation, where the constant depends on the Lipschitz constant of $f$. Acceleration as a tool to counteract this decrease in convergence rate for SMD has recently been explored, with convergence results such as convergence in high probability \citep{ene2022high,lan2020first} and in expected function value \citep{xu2018accelerated,lan2012optimal}. These approaches decouple the gradient and stochastic noise, resulting in $\mathcal{O}(L/k^2+\sigma/\sqrt{k})$ convergence rate, where $C$ depends on the Lipschitz constant of $\nabla f$, and $\sigma^2$ is the variance of the stochastic gradient. 

We note that it is possible to extend \Cref{alg:ApproxAMD} (approximate AMD) to the stochastic case by directly replacing the gradient $\nabla f$ with a stochastic oracle $\stocG$, albeit currently without convergence guarantees. In this section, we consider another version of approximate accelerated SMD that comes with convergence guarantees, using a slightly different proof technique.


For our analysis, we follow the accelerated SMD setup in \cite{xu2018accelerated}, in particular of Algorithm 2, replicated as below. Suppose that $f$ is differentiable and has $L_f$-Lipschitz gradient, and let $\stocG(x, \xi) = \nabla_x \mathsf{F}(x, \xi)$ be a stochastic gradient satisfying
\begin{equation*}
    \mathbb{E} \left[\stocG(x^{(k)}, \xi^{(k)})\right] = \nabla f(x^{(k)}),\ \mathbb{E} \left[\|\stocG\|_*^2 \mid x^{(k)}\right] \le \sigma^2.
\end{equation*}
For simplicity, we let $\Delta_k = \stocG(x^{(k)}, \xi^{(k)}) - \nabla f(x^{(k)})$ denote the zero mean stochastic component. 

\begin{algorithm}
\caption{Accelerated stochastic mirror descent (ASMD) \cite[Alg. 2]{xu2018accelerated}}\label{alg:ASMD}
\begin{algorithmic}[1]
\Require Inputs $x^{(0)}\in \mathcal{X}, y^{(0)} = \nabla \psi (x^{(0)}) \in \mathcal{X}^*,\,A_0 = s_0 = 1/2$,

\For{$k \in \mathbb{N}$}
    \State $A_{k+1} = (k+1)(k+2)/2,\ \tau_k = (A_{k+1} - A_k)/A_k,\ s_{k} = (k+1)^{3/2}$
    \State $x^{(k+1)} = \frac{\tau_k}{\tau_k+1}\nabla \psi^*(y^{(k)}) + \frac{1}{\tau_k+1} x^{(k)}$
    \State $y^{(k+1)} = y^{(k)} - \frac{A_{k+1} - A_k}{s_k} \stocG(x^{(k+1)}, \xi^{(k+1)})$
\EndFor
\end{algorithmic}
\end{algorithm}


An inexact convex conjugate can be modeled using a noise term in Step 3. With a noise term $U$, the step would instead read
\begin{equation}
    x^{(k+1)} = \frac{\tau_k}{\tau_k+1}\left[\nabla \psi^*(y^{(k)}) + U(y^{(k)})\right] + \frac{1}{\tau_k+1} x^{(k)}.
\end{equation}
The corresponding optimality conditions become
\begin{gather}
    \nabla \psi^* (y^{(k)}) = x^{(k+1)} + \frac{A_k} {A_{k+1} - A_k}(x^{(k+1)} - x^{(k)}) - U^{(k)}, \label{eq:optiCond1}\\
    y^{(k+1)} - y^{(k)} = -\frac{A_{k+1}-A_k}{s_k} \stocG(x^{(k+1)}, \xi^{(k+1)}). \label{eq:optiCond2}
\end{gather}
We can perform a similar convergence analysis using the energy function given in \cite{xu2018accelerated}. Let $\mathcal{E}^{(k)} = \mathbb{E}[A_k(f(x^{(k)}) - f(x^*)) + s_k B_\psi(x^*, \nabla \psi^*(y^{(k)}))]$. We require a bound on the diameter of the optimization domain, as stated in the following assumption. 
\begin{assumption}\label{assmp:ASMDBdd}
    There exists a constant $M_\psi>0$ such that $M_\psi = \sup_{x,x' \in \mathcal{X}} B_\psi(x, x')$.
\end{assumption}

\begin{theorem}\label{thm:ASMD}
    Suppose $f$ has $L_f$-Lipschitz gradient, the mirror map is $\alpha$-strongly convex, the approximation error $U^{(k)}$ is bounded, and that the stochastic oracle is otherwise unbiased with bounded second moments. Assume \Cref{assmp:ASMDBdd} holds, and suppose further that there exists a constant $K$ such that for every iterate $x^{(k)}$, we have
    \[\langle \nabla f(x^{(k)}), U^{(k)} \rangle \le K.\]
    Then the convergence rate of approximate ASMD is,
    \begin{align}
        \mathbb{E}[f(x^{(k)}) - f(x^*)] &\le K + \mathcal{E}^{(0)}/A_k + \frac{(k+1)^{3/2}}{A_k} \left[M_\psi  + \frac{8L_f^2 M_\psi + 2\alpha \sigma^2 + 4\|\nabla f(x^*)\|_*^2}{\alpha^2} \right] \\ &= K + \mathcal{O}(k^{-2} + k^{-1/2}).\notag
    \end{align}
\end{theorem}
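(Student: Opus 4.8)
The plan is to follow the Lyapunov (energy-decrement) strategy of \cite{xu2018accelerated} for the exact accelerated stochastic mirror descent, and to isolate the single new feature of our setting, namely that Step~3 now satisfies only the \emph{approximate} optimality condition \Cref{eq:optiCond1}, which carries the error term $U^{(k)}$. Concretely, I would show a one-step estimate of the form $\mathcal{E}^{(k+1)} - \mathcal{E}^{(k)} \le (\text{Xu-type stochastic residual}) + (A_{k+1}-A_k)\,\langle \nabla f(x^{(k+1)}), U^{(k)}\rangle$, so that the entire effect of the inexact conjugate is confined to the last family of terms. Since the theorem's bound differs from the exact \cite{xu2018accelerated} bound only by the additive constant $K$, the whole task reduces to reproducing their analysis for the bracket and verifying that the $U^{(k)}$ terms contribute precisely $K$ after normalization.

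For the core recursion I would split $\mathcal{E}^{(k+1)}-\mathcal{E}^{(k)}$ into the objective part $A_{k+1}(f(x^{(k+1)})-f^*) - A_k(f(x^{(k)})-f^*)$ and the Bregman part $s_{k+1}B_\psi(x^*,\nabla\psi^*(y^{(k+1)})) - s_k B_\psi(x^*,\nabla\psi^*(y^{(k)}))$. The objective part is controlled by $L_f$-smoothness together with the convex combination defining $x^{(k+1)}$, after substituting $\nabla\psi^*(y^{(k)}) = x^{(k+1)} + \frac{A_k}{A_{k+1}-A_k}(x^{(k+1)}-x^{(k)}) - U^{(k)}$ from \Cref{eq:optiCond1}; this substitution is exactly where $U^{(k)}$ enters, producing inner products $\langle \nabla f(x^{(k+1)}), U^{(k)}\rangle$. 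The Bregman part is handled in the dual via the conjugate-duality identity $B_\psi(x^*,\nabla\psi^*(y)) = B_{\psi^*}(y,\nabla\psi(x^*))$, the three-point identity for $B_{\psi^*}$, and the $\alpha^{-1}$-smoothness of $\psi^*$ applied to the exact additive update \Cref{eq:optiCond2}; this yields a term linear in $\stocG(x^{(k+1)},\xi^{(k+1)})$ and a quadratic term bounded by $\tfrac{(A_{k+1}-A_k)^2}{2\alpha s_k}\|\stocG\|_*^2$. Writing $\stocG = \nabla f(x^{(k+1)}) + \Delta_{k+1}$ and taking the conditional expectation kills the linear $\Delta_{k+1}$ terms by unbiasedness, while the quadratic term is controlled by the second-moment bound.

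The bracket $M_\psi + (8L_f^2 M_\psi + 2\alpha\sigma^2 + 4\|\nabla f(x^*)\|_*^2)/\alpha^2$ then reproduces \cite{xu2018accelerated} verbatim: I would bound $\|\nabla f(x^{(k+1)})\|_*^2 \le 2L_f^2\|x^{(k+1)}-x^*\|^2 + 2\|\nabla f(x^*)\|_*^2$ and use $\|x^{(k+1)}-x^*\|^2 \le 2M_\psi/\alpha$, which follows from $\alpha$-strong convexity $\tfrac{\alpha}{2}\|x-x'\|^2 \le B_\psi(x,x')$ together with the diameter bound $B_\psi(x,x') \le M_\psi$ of \Cref{assmp:ASMDBdd}; combined with $\mathbb{E}[\|\stocG\|_*^2]\le\sigma^2$ and the $1/(2\alpha s_k)$ weights, these generate the $L_f^2 M_\psi$, $\sigma^2$, and $\|\nabla f(x^*)\|_*^2$ contributions. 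The only genuinely new term is $(A_{k+1}-A_k)\langle \nabla f(x^{(k+1)}),U^{(k)}\rangle \le (A_{k+1}-A_k)K$; summing the increments from $0$ to $k-1$ telescopes these into a contribution of order $A_k K$, which becomes the standing $+K$ after dividing by $A_k$.

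Finally I would telescope the one-step estimate, using that the chosen schedules $A_{k+1}=(k+1)(k+2)/2$ and $s_k=(k+1)^{3/2}$ make the objective-difference coefficients telescope in the standard accelerated manner, and then invoke $\mathbb{E}[f(x^{(k)})-f^*] \le \mathcal{E}^{(k)}/A_k$ (from nonnegativity of $B_\psi$). With $s_k/A_k = \mathcal{O}(k^{-1/2})$ and $1/A_k = \mathcal{O}(k^{-2})$, this yields the claimed bound and the rate $K + \mathcal{O}(k^{-2}+k^{-1/2})$. I expect the main obstacle to be precisely the bookkeeping of the $U^{(k)}$ terms: because \Cref{eq:optiCond1} is only approximate, I must check that every occurrence of $U^{(k)}$ either collapses into the bounded inner product $K$ (via the hypothesis $\langle\nabla f(x^{(k)}),U^{(k)}\rangle\le K$) or is absorbed by $M_\psi$, and in particular that no $U^{(k)}$ residual grows faster than $s_k$, which would otherwise spoil the $k^{-1/2}$ stochastic term and prevent the clean additive separation of $K$ from the vanishing part of the bound.
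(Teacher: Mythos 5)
Your proposal matches the paper's proof essentially step for step: the same Lyapunov function $\mathcal{E}^{(k)}$, the same isolation of the inexactness into $(A_{k+1}-A_k)\langle \nabla f(x^{(k+1)}), U^{(k)}\rangle \le (A_{k+1}-A_k)K$ via the optimality conditions, the same dual-space bound $s_k B_\psi(\nabla\psi^*(y^{(k)}),\nabla\psi^*(y^{(k+1)})) \le \frac{(A_{k+1}-A_k)^2}{2\alpha s_k}\|\stocG\|_*^2$ combined with the diameter-plus-smoothness bound on the gradient norm, and the same telescoping followed by division by $A_k$. The only cosmetic difference is that the paper cancels the objective terms using plain convexity of $f$ (with $L_f$-smoothness entering only through the bound $\|\nabla f(x)\|_* \le L_f\sqrt{2M_\psi/\alpha} + \|\nabla f(x^*)\|_*$), whereas you attribute that cancellation to smoothness, but this does not alter the argument.
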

\begin{proof}
    Deferred to \Cref{app:ASMD}.
\end{proof}
\Cref{thm:ASMD} thus extends the classical $\mathcal{O}(k^{-2} + k^{-1/2})$ convergence rate for accelerated stochastic mirror descent to the approximate case, up to a constant depending on the approximation error. We note that $f$ having bounded gradients is sufficient to satisfy the second assumption that $\langle \nabla f(x^{(k)}), U^{(k)} \rangle$ is bounded, as we have also assumed that $U^{(k)}$ is bounded.


\section{Experiments}\label{sec:Experiments}
In this section, we demonstrate the performance of the proposed algorithms on image reconstruction and various training tasks. As we will be using neural networks to model the mirror potentials, we refer to the three proposed algorithms as learned AMD (LAMD), learned SMD (LSMD), and learned ASMD (LASMD) for the algorithms proposed in \Cref{sec:AMD,sec:SMD,sec:ASMD} respectively with the learned mirror potentials. We will primarily consider GD, Nesterov accelerated GD, and the Adam optimizer as baselines for these tasks, with SGD where stochasticity is applicable. \rev{A grid search was used to find the baseline optimizer step-sizes, chosen to minimize the loss at iteration 100. Specific details for the choice of baseline step-sizes can be found in \Cref{app:baseline}.} \rev{In the image reconstruction experiments, proximal operators are easily available, which allows us to compare with a learned primal-dual (LPD) method, given by Algorithm 4.4 in \cite{banert2020data}. This LPD scheme can be interpreted as a generalization of the PDHG and Douglas--Rachford algorithms. }

For fairness, we use pre-trained mirror maps as given in \citet{tan2023data,tan2023robust}, and replicate selected experiments from these works to review the convergence behavior as compared to LMD. The mirror maps were trained to minimize a loss function of the form 
\begin{equation}
    \tilde{L}(\theta, \vartheta) = \sum_{k=1}^N \left[f(\tilde{x}_k^{(i)}) + s_k\|(\nabla M_\vartheta^* \circ \nabla M_\theta - I)(\tilde{x}_k^{(i)})\|\right],
\end{equation}
where $\tilde{x}_k^{(i)}$ are the approximate MD iterations as in \Cref{eq:LMDDef}, and the mirror maps are tied across each iteration. The loss $\tilde{L}$ is averaged over a family of training objectives $f$, such as TV-regularized image reconstruction or SVM hinge losses, and the regularization parameter $s_k$ is increased as training progresses. The maximum unrolled iterations is set to $N=10$, and the LMD step-sizes are also learned for these iterations \citep{tan2023data}. 

All implementations were done in PyTorch, and training was done on Quadro RTX 6000 GPUs with 24GB of memory \citep{PyTorch}. 

\subsection{Numerical Considerations and Implementation}
For numerical stability, we additionally store the dual variables $\nabla M_\theta (x^{(k)})$, similarly to ASMD in \Cref{alg:ASMD}. This avoids computing $\nabla M_\theta \circ \nabla M_\vartheta^*$, which would be the identity in the exact MD case where $M_\vartheta^*$ is the convex conjugate of $M_\theta$, and appears to be a source of instability. For example, the current LMD scheme (with step-size $t$) computes 
\begin{equation} \label{eq:originalMD}
    x^{(k+1)} = \nabla M_\vartheta^*\left( \nabla M_\theta (x^{(k)}) - t \nabla f(x^{(k)})\right).
\end{equation}

We propose to replace this with updates in the dual
\begin{equation}\label{eq:dualMD}
    y^{(k+1)} = y^{(k)} - t \nabla f(\nabla M_\vartheta^* (y^{(k)})),\; y^{(k)}=\nabla M_\theta (x^{(k)}).
\end{equation}
In the case where $\nabla M_\vartheta^* = (\nabla M_\theta)^{-1}$, both schemes correspond exactly to mirror descent. The key difference is that the inconsistency is now within the $\nabla f$ which would heuristically be close to 0 around the minimum, instead of on the entire dual term. To put this in terms of forward-backward error, let $\hat{x}^{(k+1)} = \nabla M_\theta^{-1}\left( \nabla M_\theta x^{(k)} - t \nabla f(x^{(k)})\right)$ be the exact mirror update on $x^{(k)}$ with mirror potential $M_\theta$. The forward-backward inconsistencies $\nabla M_\theta (x^{(k+1)}) - \nabla M_\theta (\hat x^{(k+1)})$ for \Cref{eq:originalMD} and \Cref{eq:dualMD} would then be given respectively as

\begin{alignat}{2}
    \nabla M_\theta(x^{(k+1)}) - \nabla M_\theta(\hat x^{(k+1)}) &= (\nabla M_\vartheta^* - (\nabla M_\theta)^{-1})\left( \nabla M_\theta x^{(k)} - t \nabla f(x^{(k)})\right) &&\ \text{ for the primal update,} \\
    \nabla M_\theta(x^{(k+1)}) - \nabla M_\theta(\hat x^{(k+1)}) &= t \nabla f(x^{(k)}) - t \nabla f(\nabla M_\vartheta^* \circ \nabla M_\theta x^{(k)}) &&\ \text{ for the dual update.}
\end{alignat} 

By pulling the difference between $\nabla M_\vartheta^*$ and $(\nabla M_\theta)^{-1}$ into the dual term, we empirically observe a much lower forward-backward error, which would correspond to smaller constants and tighter convergence bounds for our presented theorems. We formalize the proposed learned AMD (LAMD) with $R(x,x')=\frac{1}{2}\|x-x'\|^2$, learned SMD (LSMD), and learned ASMD (LASMD) algorithms with this dual update modification in the following \Cref{alg:LAMD,alg:LSMD,alg:LASMD}. 

Since the original pre-trained models only have step-sizes available up to $N=10$, we need to choose step-sizes to continue optimizing. We consider running LAMD, LSMD and LASMD for 2000 iterations with various choices of step-size. In particular, we consider three constant step-sizes $t_k=\tilde{c}$, as well as step-size regimes of the form and $t_k = c/k$ and $t_k = c'/\sqrt{k}$ for LAMD and LSMD. These step-sizes were derived from step-sizes given from the pre-trained models, given for the image denoising, image inpainting and SVM experiments in \citet{tan2023data,tan2023robust}. In particular, for the constant step-size extensions, we consider taking $\tilde{c}$ to be the mean, minimum and final learned step-size, i.e. $c = \sum_{i=1}^{10} t_i/10$, $c=\min_{1\le i \le 10} t_i$, $c=t_{10}$ respectively. For the reciprocal step-size extensions $t_k = c/k$, we compute $c$ by taking the average $c = \sum_{i=1}^{10} i t_i$. For root-reciprocal extensions $t_k = c'/k$, we similarly take $c' = \sum_{i=1}^{10} i \sqrt{t_i}$. These extensions act as best-fit constants for the given learned step-sizes. 

Recall that the conditions of convergence for AMD and SMD desire a non-increasing step-size condition. Moreover, convergence of the ergodic average for SMD requires a $\ell_2$--$\ell_1$ summability condition, which is satisfied by the $t_k = c/k$ extension. We will demonstrate the behavior of these algorithms under both constant, reciprocal and root-reciprocal step-size regimes.

We note that while ASMD does not have a step-size in its definition, we can artificially introduce a step-size by adding a step-size in the dual update step in Step 4 of \Cref{alg:ASMD}, as 
\[y^{(k+1)} = y^{(k)} - t_k \frac{A_{k+1} - A_k}{s_k} \stocG(x^{(k+1)}, \xi^{(k+1)}).\]
In the case where $t_k = t$ is constant, this can be interpreted as instead running ASMD without this step-size modification on the scaled function $tf$. Therefore, we still get convergence guarantees, up to a constant multiple factor. 





\begin{algorithm}
\caption{Learned AMD (LAMD)}\label{alg:LAMD}
\begin{algorithmic}[1]
\Require Input $\tilde{x}^{(0)} = \tilde{z}^{(0)} = x^{(0)} \in \mathcal{X}$, parameter $r \ge 3$, step-sizes $t_k$, number of iterations $K$
\State $z^{(0)} = \nabla M_\theta(\tilde{z}^{(0)})$
\For{$k = 0, ..., K$}
    \State $x^{(k+1)} = \lambda_k \nabla M_\vartheta^* ({z}^{(k)}) + (1-\lambda_k) \tilde{x}^{(k)}$ with $\lambda_k = \frac{r}{r+k}$
    \State $z^{(k+1)} = z^{(k)} - \frac{kt_k}{r}\nabla f(x^{(k+1)})$
    \State $\tilde{x}^{(k+1)} = x^{(k+1)} - \gamma t_k \nabla f(x^{(k+1)}) $
\EndFor
\State \textbf{return} $x^{(K+1)}= \lambda_K \nabla M_\vartheta^* ({z}^{(K)}) + (1-\lambda_K) \tilde{x}^{(K)}$
\end{algorithmic}
\end{algorithm}%
\begin{algorithm}
\caption{Learned SMD (LSMD)}\label{alg:LSMD}
\begin{algorithmic}[1]
\Require Input $x^{(0)}\in \mathcal{X}$, step-sizes $t_k$, number of iterations $K$, stochastic gradient oracle $\stocG$
\State $y^{(0)} = \nabla M_\theta(x^{(0)})$
\For{$k = 0, ..., K$}
    \State $y^{(k+1)} = y^{(k)} - t_k \stocG(\nabla M_\vartheta^* (y^{(k)}), \xi^{(k)})$
\EndFor
\State \textbf{return} $x^{(K+1)} = \nabla M_\vartheta^*(y^{(K+1)})$
\end{algorithmic}
\end{algorithm}%
\begin{algorithm}
\caption{Learned ASMD (LASMD)}\label{alg:LASMD}
\begin{algorithmic}[1]
\Require Input $x^{(0)}\in \mathcal{X},\,A_0 = s_0 = 1/2$, step-sizes $t_k$, number of iterations $K$

\State $y^{(0)} = \nabla M_\theta(x^{(0)})$
\For{$k = 0, ..., K$}
    \State $A_{k+1} = (k+1)(k+2)/2,\ \tau_k = (A_{k+1} - A_k)/A_k,\ s_{k} = (k+1)^{3/2}$
    \State $x^{(k+1)} = \frac{\tau_k}{\tau_k+1}\nabla M_\vartheta^*(y^{(k)}) + \frac{1}{\tau_k+1} x^{(k)}$
    \State $y^{(k+1)} = y^{(k)} - t_k\frac{A_{k+1} - A_k}{s_k} \stocG(x^{(k+1)}, \xi^{(k+1)})$
\EndFor
\State \textbf{return} $x^{(K+1)}=\frac{\tau_K}{\tau_K+1}\nabla M_\vartheta^*(y^{(K)}) + \frac{1}{\tau_K+1} x^{(K)}$
\end{algorithmic}
\end{algorithm}

\subsection{Image Denoising}
We consider image denoising on an ellipse data set, with images of size $128\times 128$, where we apply LAMD, LSMD and LASMD with pre-trained mirror maps given in \citet{tan2023robust}. The mirror maps $\nabla M_\theta:\mathcal{X} \rightarrow \mathcal{X}^*, \nabla M_\vartheta^*:\mathcal{X}^* \rightarrow \mathcal{X}$ are modelled using input-convex neural networks (ICNNs) \citep{amos2017icnn}, and are trained to minimize the function values while also minimizing the \emph{forward-backward inconsistency}, which is defined to be $(\nabla M_\vartheta^* \circ \nabla M_\theta - I)(x^{(k)})$. The training data used for the pre-trained mirror map are noisy ellipse phantoms in X-ray CT, generated using the Deep Inversion Validation Library (DIVal) \citep{wang2006total,leuschner2019deep}. The phantoms were generated in the same manner as in \citet{tan2023robust}. The target functions to optimize were given by TV model-based denoising, 
\[\mathcal{F} = \left\{f(x) = \|x-y\|_2^2 + \lambda \|\nabla x\|_1 \right\},\]
where $y$ are noisy phantoms, and $\nabla x$ is the pixel-wise discrete gradient, with the regularization parameter $\lambda=0.15$. 

    
    
\begin{figure}
    \centering
    \subfloat[\centering Clean phantom]{{\includegraphics[height=2.4cm, trim={0 0 390 0},clip]{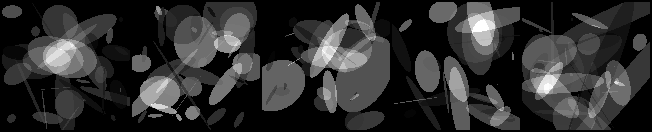} }}
    \subfloat[\centering Noisy phantom]{{\includegraphics[height=2.4cm, trim={0 0 390 0},clip]{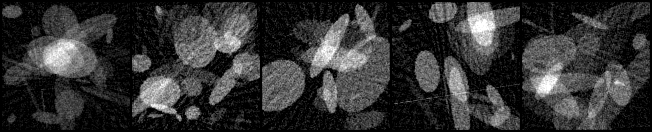} }}
    \subfloat[\centering True TV reconstruction]{{\includegraphics[height=2.4cm, trim={0 0 390 0},clip]{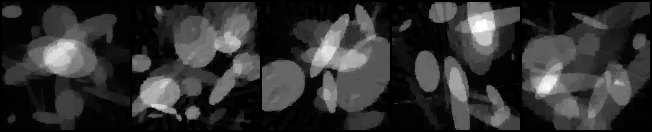} }}
    \caption{Example of the ellipse phantom data, as well as the reconstruction gained by using gradient descent on the TV function.}\label{fig:ellipse_imgs}
\end{figure}

\begin{figure}
    \centering
    \subfloat[\centering LAMD]{{\includegraphics[width=0.5\linewidth,trim={0.9cm 0 0.9cm 0},clip]{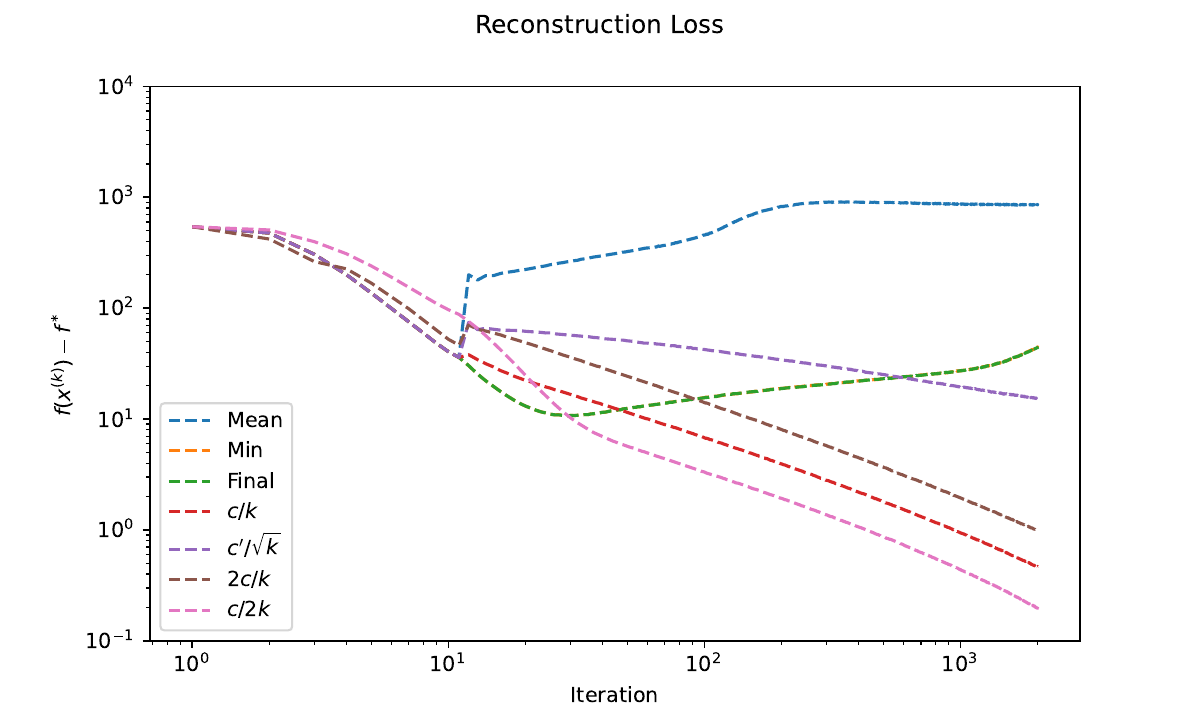}\label{fig:ellipse_amd_logloss}}}%
    \subfloat[\centering LSMD]{{\includegraphics[width=0.5\linewidth,trim={0.9cm 0 0.9cm 0},clip]{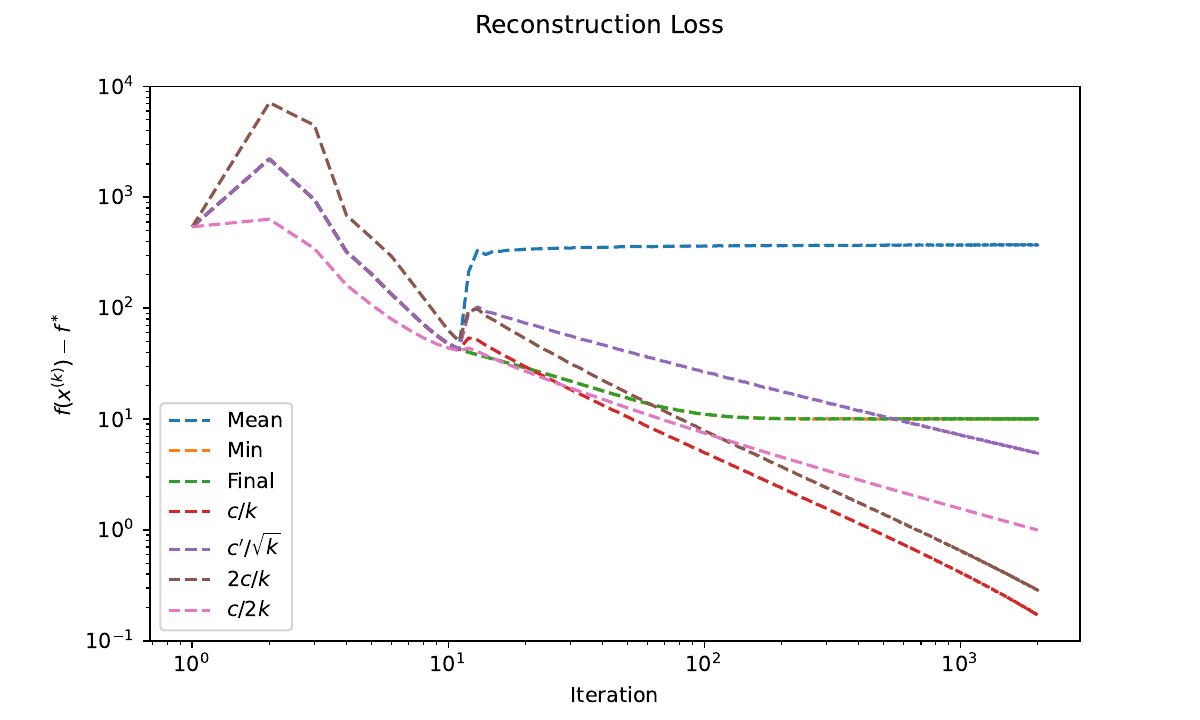}\label{fig:ellipse_smd_logloss}}}
    
    \subfloat[\centering LASMD.]{{\includegraphics[width=0.5\linewidth,trim={0.9cm 0 0.9cm 0.9cm},clip]{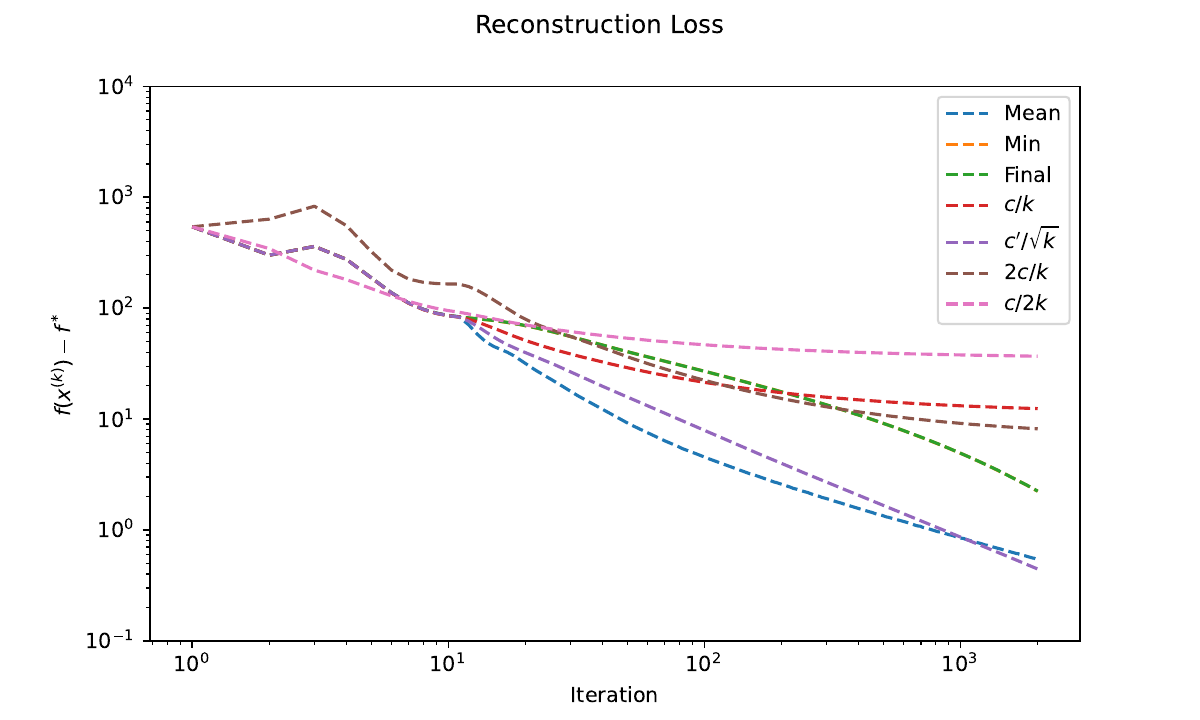}\label{fig:ellipse_asmd_logloss}}}
    \subfloat[\centering Comparison with baselines.]{{\includegraphics[width=0.5\linewidth,trim={0.9cm 0 0.9cm 0.9cm},clip]{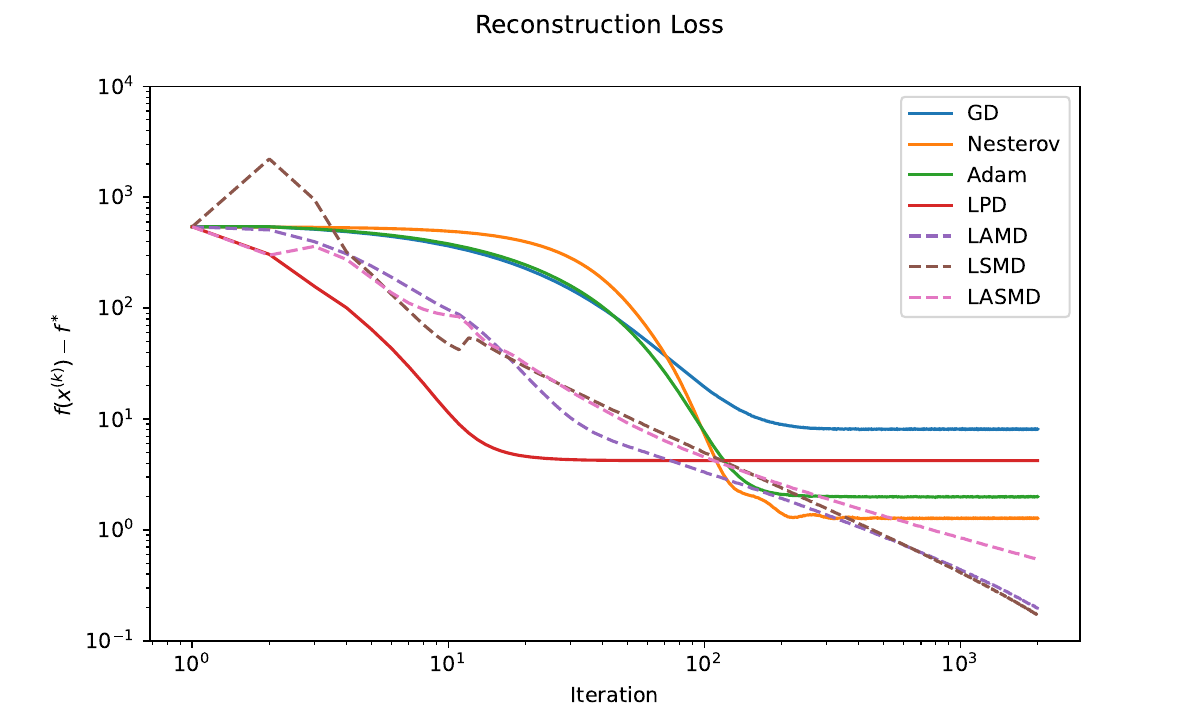}\label{fig:ellipse_loglosses}}}
    \caption{Evolution of losses for the proposed LAMD, LSMD and LASMD methods with various step-size extensions (a-c), as well as baselines, evaluated over 2000 iterations for TV model-based denoising of noisy ellipse phantoms. We observe that reciprocal step sizes generally work well for LAMD and LSMD, with larger step-sizes (given by the ``mean'' plot) resulting in non-convergence to minima. Constant step-sizes work better for LASMD, which can be attributed to the already decaying step-sizes in LASMD. We observe that the learned MD methods exhibit similar convergence behavior, which may be due to the relative strong convexity of the underlying optimization problem.}\label{fig:denoiseSSExts}

\end{figure}


\Cref{fig:ellipse_imgs} presents some of the noisy ellipse data that we aim to reconstruct using a TV model. We observe that the noise artifacts are non-Gaussian and have some ray-like artifacts. We model the stochasticity by artificially adding Gaussian noise to the gradient $\stocG(x, \xi) - g(x) = \Delta(x, \xi) \sim \mathcal{N}(0, \sigma^2 I_{n\times n})$, with $\sigma = 0.05$. 



\Cref{fig:denoiseSSExts} demonstrates the effect of extending the learned step-sizes for LMD, LAMD, LSMD and LASMD, compared to the baseline optimization algorithms GD, Nesterov accelerated GD, and Adam, and LPD. We observe that step-sizes following $t_k = c/k$ perform well for LAMD and LSMD, which is consistent with observations made in \cite{tan2023robust}, \Cref{thm:SMDconvergence}. For LASMD, we observe that a constant step-size extension choice is optimal. We also observe the poor performance of Adam, for which various non-convergence results are known for convex settings \citep{reddi2018convergence,zou2019sufficient}. \rev{The plateauing phenomenon with the baselines can not be avoided in this setting, with smaller step-sizes delaying the ``decrease phase'' and lowering the eventual plateau. Moreover, LPD also plateaus, possibly due to non-generalization of the learned method to higher iteration counts.} We will use the aforementioned optimal step-size extensions as found in \Cref{fig:denoiseSSExts} to extend step-sizes for the following experiments. 

\subsection{Image Inpainting}
We additionally consider the image inpainting setting in \citet{tan2023data}. The images to reconstruct are STL-10 images, which are $96\times 96$ color images. The images are corrupted with a fixed 20\% missing pixel mask $Z$, then 5\% Gaussian noise is added to all color channels to get the noisy image. The function class we optimize over is the class of TV-regularized variational forms, with regularization parameter $\lambda = 0.15$ as chosen in \citet{tan2023data}, given by
\[\mathcal{F} = \left\{f(x) = \|Z\circ(x-y)\|_2^2 + \lambda \|\nabla x\|_1 \right\},\]
where $y$ are images corrupted with missing pixels. We use mirror maps that are pre-trained with the training regime given in \citet{tan2023data}, and measure their performance when applied with LAMD, LSMD and LASMD as compared to MD. 

We first note that this setting does not admit a stochastic interpretation, and thus we set the artificial noise in LSMD and LASMD to zero. Notice that in this case, LSMD updates revert to mirror descent updates. The main difference between the LSMD that we will present here with the LMD used in \cite{tan2023data} is that in this work's computations, the dual iterates are saved, as described at the start of \Cref{sec:Experiments}.

For comparison with baselines, we apply the proposed methods to optimize a function that arises from TV-based variational model for a single noisy masked image from the STL-10 test data set. The step-sizes were chosen by considering the best step-size extension out of those described at the start of \Cref{sec:Experiments}, which were reciprocal step-size extensions for LAMD, LSMD and LMD, and constant step-size for LASMD given by the mean of the learned step-sizes. To compute the global minimum of the optimization problem, we run gradient descent for 15000 iterations with a step-size of $5\times 10^{-4}$, followed by another 5000 iterations with a step-size of $1\times 10^{-4}$. 

\Cref{fig:inpaint_loglosses} demonstrates approximately that LAMD and LASMD are able to achieve $\mathcal{O}(k^{-2})$ convergence. Moreover, we see the effect of storing the dual iterates instead of the primal iterates, as the LSMD converges without the divergent behavior of LMD. Moreover, the convergence rate of LAMD is faster than is suggested by \Cref{thm:AMDconvergence} for step-size $t_k = \Theta(k^{-1})$. This suggests that the approximation error term given in \Cref{eq:AMDApproxErrorTerm},
\[\left\langle \tilde{z}^{(k+1)} - x^*, \nabla \psi(\tilde{z}^{(k+1)}) - \nabla \psi(\hat{z}^{(k+1)})\right\rangle,\]
is decaying as well. This could be attributed to the iterate approaching the global minimum such that the first term $\tilde{z}^{(k+1)} - x^*$ is decaying. We additionally observe the speedup of LAMD compared to Nesterov accelerated GD \rev{and Adam} in the earlier iterates, coming from the learned mirror maps, \rev{as well as better generalization performance to higher iterations as compared to LPD}. 

\begin{figure}
    \centering
    \includegraphics[width=0.8\linewidth]{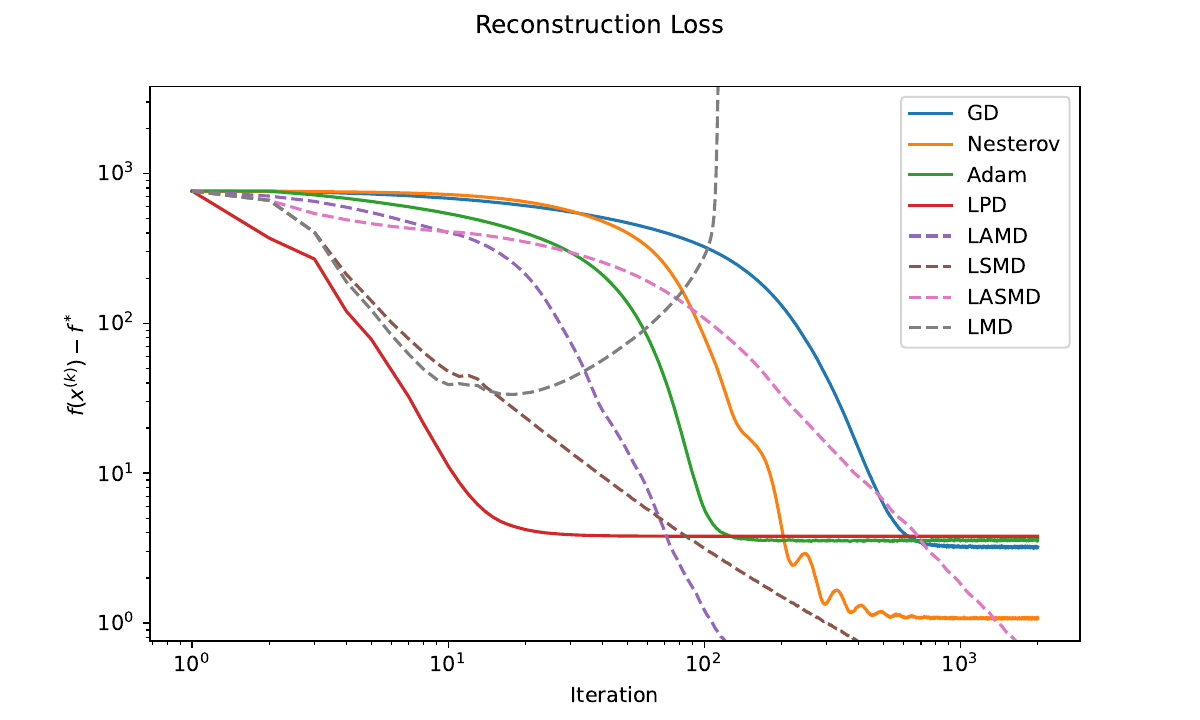}
    \caption{Plot of the function optimality gap $f(x^{(k)}) - f^*$ for 2000 iterations of various MD methods for image inpainting. The step-size regime is reciprocal for LAMD, LSMD and LMD, and constant for LASMD. We observe that each of LAMD, and LASMD both exhibit approximately $\mathcal{O}(k^{-2})$ convergnce. LMD and LSMD exhibit approximately $\mathcal{O}(k^{-1})$ convergence, with instability around iteration 100 for LMD causing the increase in function value. As we are not applying added stochastic noise in this setting, the only difference between LSMD and LMD is that the dual variables are stored as described in \Cref{sec:Experiments}, which allows the LSMD iterates to stay bounded. }
    \label{fig:inpaint_loglosses}
\end{figure}

\subsection{SVM Training}
To demonstrate an application of stochasticity, we consider training a support vector machine (SVM) over a dataset with many samples. Similarly to \citet{tan2023data}, the task is to classify the digits 4 and 9 from the MNIST data set. A 5-layer neural network was used to compute feature vectors $\phi_i \in \R^{50}$ from the MNIST images for classification. The SVM training problem in this case is to find weights $\rvw \in \R^{50}$ and a bias $b$ such that for a given set of features and targets $(\phi_i, y_i)_{i \in \mathcal{I}} \in \R^{50} \times \{\pm 1\}$, the prediction $\sign(\rvw^\top \phi_i+b)$ matches $y_i$ for most samples. This can be reformulated into a variational problem as follows, where $C>0$ is some positive constant,

\begin{equation}\label{eq:SVMLoss}
    \min_{\rvw, b} \frac{1}{2}\rvw^\top \rvw + \frac{C}{|\mathcal{I}|} \sum_{i \in \mathcal{I}} \max(0, 1-y_i (\rvw^\top \phi_i + b)).
\end{equation}

Using the pre-trained neural network, we train SVMs using the training fold of MNIST using the 4 and 9 classes, which consists of 11791 images. For testing the generalization accuracy of these methods, we use the 4 and 9 classes in the testing fold, which consists of 1991 images. For the stochastic methods, we consider using a batch-size of 500 random samples from the testing fold, and the same for stochastic gradient descent. To train the SVMs, full-batch gradient is used for LAMD, LMD, GD and Nesterov accelerated GD. Batched gradients with batch-size 500 were used for LSMD, LASMD and SGD. We consider the same number of optimization for both the full-batch methods and the stochastic variants. This is slightly different from typical learned schemes where if the data is batched, then more training iterations have to be taken to keep the number of ``epochs'' the same. 

The optimization problem and initialization was taken to be the same as in \citet{tan2023data}, where 100 SVMs are initialized using $(\rvw, b) \sim \mathcal{N}(0, I_{50+1})$. Each SVM has its parameters optimized with respect to the hinge loss (\ref{eq:SVMLoss}) using the various optimization methods. As in the previous section, we consider running each of the methods with all the step-size choices, and present the best choice for comparison. \rev{We note that the LPD scheme is not applicable in this setting as the sum in (\ref{eq:SVMLoss}) does not admit an easily computable proximal term.}



\Cref{fig:SVM} compares the losses on the entire training set, as well as the generalization error of the computed SVMs on the test set. The step-size regimes chosen are reciprocal for LAMD, LSMD and LMD, and constant for LASMD and SGD. \rev{LSMD is able to perform on par with full-batch GD despite only having access to stochastic gradients, until eventually plateauing around the same loss as SGD. We observe significant acceleration of LAMD compared to all the compared methods. LAMD also has almost identical final test classification error to the baseline methods. }



\begin{figure}
    \centering
    \subfloat[\centering SVM hinge loss]{{\includegraphics[height=5.4cm,trim={0.9cm 0 0.9cm 0},clip]{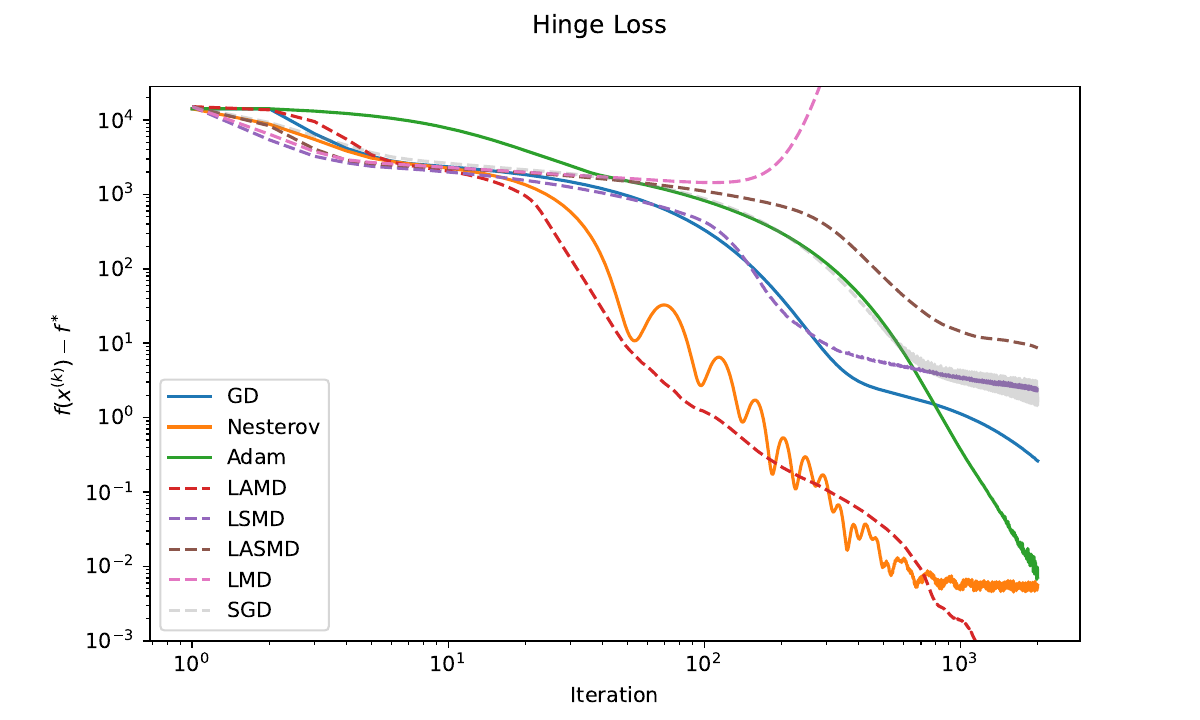}}}
    \subfloat[\centering Test prediction error]{{\includegraphics[height=5.4cm,trim={0.9cm 0 0.9cm 0},clip]{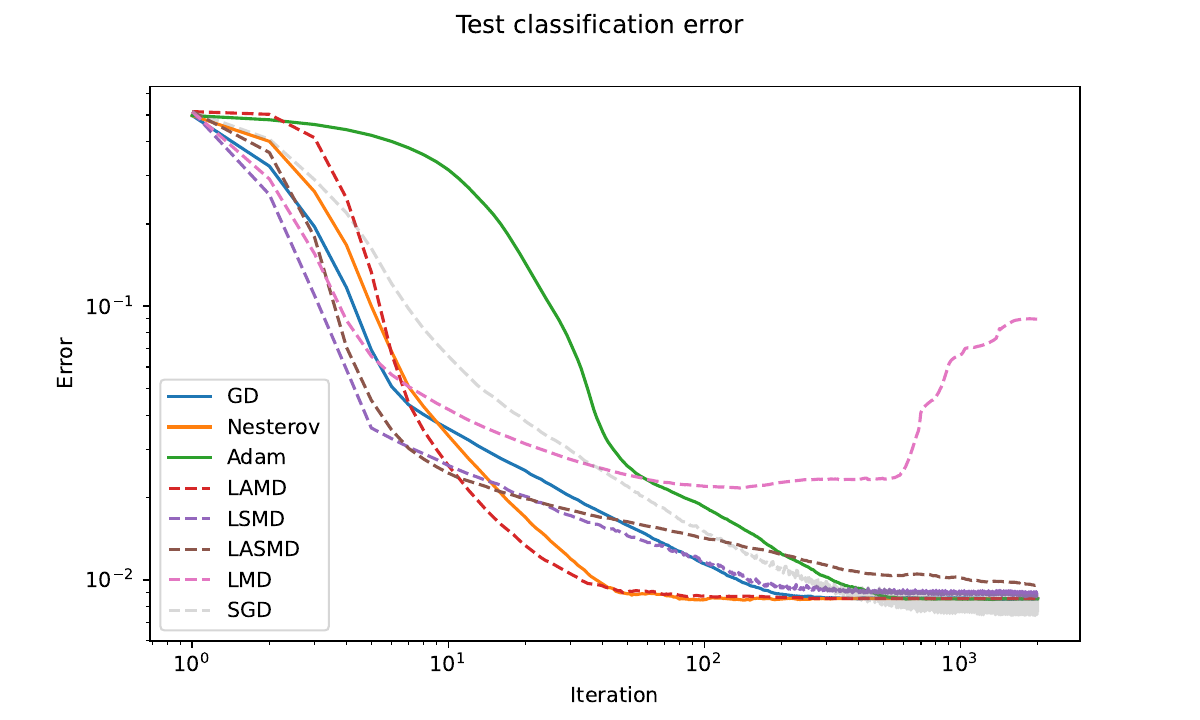} }}

    \caption{Plots of the SVM hinge loss and test accuracy for the various optimization methods tested. We observe that LAMD is able to outperform the baselines and SGD in terms of hinge loss, without the plateauing behavior of Nesterov accelerated GD. We additionally observe that the end fluctuations of LSMD are smaller due to the reciprocal step-size regime chosen. However, SGD is able to obtain solutions that have lower classification error.}
    \label{fig:SVM}
\end{figure}

\section{Equivariant MD for NNs}\label{sec:equivariant}




In the previous sections, we demonstrated some applications of LMD for convex optimization. However, the mirror map models used still require hundreds of thousands of parameters. Compared to classical optimizers such as SGD or Adam with less than 10 hyperparameters, this is a significant limitation in scaling LMD to complex problems such as training neural networks. In this section, we propose to remedy this by reducing the number of learnable parameters for LMD while retaining expressiveness. This is done by exploiting symmetries of the problems at hand, sometimes known as \emph{equivariance} in the literature. 

Equivariance is a group-theoretic concept, where a group action commutes with a function application. In practical terms, symmetries can be exploited to reduce problem complexity. For example, translations and rotations of medical histopathology or electron microscopy images can also be considered as images from the same distribution. \citet{bekkers2018roto} exploit this to construct convolutional neural networks exploiting this symmetry to achieve improved performance on various medical imaging segmentation tasks. 

In the context of L2O, we aim to exploit symmetries to construct simpler or faster optimizers. Equivariance for neural networks have recently been explored, from both an external viewpoint of optimization or training losses, as well as the internal viewpoint of learned filters. \citet{lenc2015understanding} consider using equivariance to interpret the filters learned in a convolutional neural network. \citet{chen2021equivariant} consider using equivariance in the unsupervised learning setting, using known equivariance properties to construct virtual operators and drastically increase the reconstruction quality. A major field is the usage of equivariant neural networks that enforce some type of group symmetry by using specific network architectures and activations, used in inverse problems and graph learning problems \citep{celledoni2021equivariant,cohen2016group, worrall2017harmonic}. By strictly enforcing symmetry, the resulting networks are more measurement-consistent and have better reconstruction quality. We focus on the use of equivariance for dimensionality reduction, provably demonstrating ``weight-tying'' when using LMD for neural networks. 

To simplify the process of training LMD for neural networks, we consider exploiting the architecture of a neural network. In particular, the weights of a neural network typically carry permutation symmetries, which are exploited in works such as the Neural Tangent Kernel or Neural Network Gaussian Processes \citep{jacot2018neural, lee2018deep}. For example, a neural network with a dense layer and layer-wise activations between two feature layers will be invariant to permutations of the weights and feature indices. Another example is convolutional networks, where permuting the kernels will result in a permuted feature layer. In this section, we aim to formalize equivariance in the context of neural networks, and eventually show that we can MD methods that are equivariant stay equivariant along their evolution. This means that we can replace parameterizations of MD with equivariant parameterizations, reducing the number of learnable parameters while retaining the expressivity. 

Let us first formalize the notion of symmetry that we aim for equivariance with. Let $G$ be a group of symmetries acting linearly on a real Hilbert parameter space $(\mathcal{Z}, \langle\cdot, \cdot \rangle)$. In other words, $G$ acts on $\mathcal{Z}$ via the binary operator $\cdot:G \times \mathcal{Z} \rightarrow \mathcal{Z}$, and moreover, for any $z, z' \in \mathcal{Z},\, \lambda \in \R$, we have
\[g \cdot (z+z') = g\cdot z + g \cdot z',\, g \cdot (\lambda z) = \lambda g\cdot z.\]
Assume further that $G$ has an adjoint in the inner product, i.e. $\langle g\cdot z, z' \rangle = \langle z, g^{-1} \cdot z' \rangle$. One example is when $\mathcal{Z} = \R^n$ equipped with the standard inner product, and $G$ is a subset of the orthogonal group $O(n)$ (the group of $n\times n$ orthonormal matrices) with the natural action (matrix multiplication). 

Let $L:\mathcal{Z} \rightarrow \R$ be a loss function for our parameters. Suppose $x^{(0)}$ is initialized with some distribution $x^{(0)} \sim (\mathcal{Z}, p)$ that is stable under $G$, i.e., $p(x^{(0)}) = p(g \cdot x^{(0)})\ \forall g \in G$, and further that $L$ is stable under the permutations $L(g\cdot x) = L(x)$. For example, for a feature vector $x \in \R^n$, the loss function $L(x) = \|x\|_2$ and initialization distribution $\mathcal{N}(0, I_n)$ are invariant under the orthogonal group $G = O(n)$.  The loss functions $L(x) = \|x\|_1, \|x\|_\infty$ and any component-wise i.i.d. distribution are invariant under the permutation group $G = \mathrm{Sym}(n)$. 

Recall that the objective of LMD is to minimize the loss after some number of optimization iterations \citep{tan2023data}. To this end, let $\Omega:\mathcal{Z} \rightarrow \mathcal{Z}$ be an optimization iteration. For example, gradient descent on $L$ can be written as $\Omega(z) = z - \eta \nabla L(z)$. The objective of LMD can be written, for example, as 
\begin{equation*}
    \min_\Omega \mathbb{E} \left[\sum_{i=1}^N L(z_i)\right].
\end{equation*}

Suppose that we optimize $\Omega$ using some sort of gradient descent (in, say, the Sobolev space $H^1(\mathcal{Z})$ of $L^2$ functions with $L^2$ derivative), and further that $\Omega_0$ satisfies 
\begin{equation*}
    \Omega_0(g\cdot z) = g \cdot [\Omega_0(z)], \quad \forall g \in G, z \in \mathcal{Z}.
\end{equation*}
This reads that $\Omega_0$ is $G$-invariant under conjugation, where the conjugation action of $G$ on functions $F: \mathcal{Z} \rightarrow \mathcal{Z}$ is $(g \cdot F)(z) = g \cdot (F(g^{-1} \cdot z))$. We equivalently denote this as $G$-equivariance, as $g$ commutes with $\Omega_0$. For a sanity check, consider the case where $\Omega_0$ is gradient descent with on the objective function $L$. The following proposition says that $\Omega_0$ is indeed $G$-equivariant under these conditions.

\begin{proposition}\label{prop:GDEquivariant}
    If $\Omega_0$ is gradient descent on the $G$-invariant objective function $L$, then it is $G$-equivariant, i.e. $\Omega_0(g\cdot z) = g \cdot [\Omega_0(z)]$ for all $g \in G, z \in \mathcal{Z}$. 
\end{proposition}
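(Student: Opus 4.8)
The plan is to reduce the claim to a single key fact: that the gradient map $\nabla L$ is itself $G$-equivariant, i.e.\ $\nabla L(g \cdot z) = g \cdot \nabla L(z)$ for all $g \in G$ and $z \in \mathcal{Z}$. Once this is established, equivariance of the gradient-descent operator $\Omega_0(z) = z - \eta \nabla L(z)$ follows immediately from linearity of the action: $\Omega_0(g \cdot z) = g \cdot z - \eta \nabla L(g \cdot z) = g \cdot z - \eta (g \cdot \nabla L(z)) = g \cdot (z - \eta \nabla L(z)) = g \cdot \Omega_0(z)$, where the penultimate equality uses that $g$ distributes over the subtraction and scalar multiplication.

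To prove the gradient equivariance, I would differentiate the invariance identity $L(g \cdot z) = L(z)$. Because the action of $g$ is linear, the map $z \mapsto g \cdot z$ has constant Fr\'echet derivative equal to the operator $g$ itself, so the chain rule applied to the left-hand side gives, for every direction $v \in \mathcal{Z}$, the identity $\langle \nabla L(g \cdot z), g \cdot v \rangle = \langle \nabla L(z), v \rangle$. I would then invoke the adjoint hypothesis $\langle g \cdot a, b \rangle = \langle a, g^{-1} \cdot b \rangle$, which states precisely that the adjoint of $g$ is $g^{-1}$, to rewrite the left-hand side as $\langle g^{-1} \cdot \nabla L(g \cdot z), v \rangle$ (using symmetry of the real inner product to move $g$ off the second argument). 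Since the resulting equality $\langle g^{-1} \cdot \nabla L(g \cdot z), v \rangle = \langle \nabla L(z), v \rangle$ holds for all $v$, nondegeneracy of the Hilbert-space inner product forces $g^{-1} \cdot \nabla L(g \cdot z) = \nabla L(z)$, which rearranges to the desired transformation rule $\nabla L(g \cdot z) = g \cdot \nabla L(z)$.

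The main obstacle is purely the bookkeeping of pushing the group element through the inner product in the correct direction: linearity of the action is used in two distinct places — first to identify the derivative of $z \mapsto g \cdot z$ with $g$, and later to factor $g$ out of $\Omega_0$ — while the adjoint relation $g^* = g^{-1}$ must be applied to the second slot of $\langle \cdot, \cdot \rangle$. No genuine analysis is required beyond assuming $L$ is differentiable so that $\nabla L$ is well-defined; once the gradient transformation rule is secured, the remainder is a formal one-line computation.
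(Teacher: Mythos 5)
Your proposal is correct and takes essentially the same route as the paper's proof: both reduce the claim to the gradient transformation rule $\nabla L(g \cdot z) = g \cdot \nabla L(z)$, obtained by differentiating the invariance $L(g \cdot z) = L(z)$ and combining the adjoint property $\langle g \cdot a, b \rangle = \langle a, g^{-1} \cdot b \rangle$ with nondegeneracy of the inner product (Riesz representation), and then conclude via linearity of the action. The only cosmetic difference is that you invoke the chain rule directly, whereas the paper writes out the directional-derivative difference quotients and the canonical isomorphism $\iota \colon \mathcal{Z}^* \to \mathcal{Z}$ explicitly.
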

\begin{proof}
    Relies on the Riesz representation theorem and adjoint property. Deferred to \Cref{app:GDEquivariant}.
\end{proof}


Suppose first that $N=1$, so that we want to minimize (initialized at a $G$-invariant $\Omega = \Omega_0$)
\begin{equation} \label{eq:LMDOneIter}
    \mathcal{L}(\Omega) \coloneqq \mathbb{E}_{z_0 \sim p} \left[L(z_1)\right] = \mathbb{E}_{z_0 \sim p} \left[L(\Omega(z_0))\right].
\end{equation}
We wish to show that after optimization, $\Omega$ commutes with $g$ (i.e. $\Omega$ stays invariant under the conjugation action of $G$ on functions $\mathcal{Z} \rightarrow \mathcal{Z})$. %
%
To do this, we observe that the discrete differences as computed by autograd are invariant under the group action of $g$. Assume further $\Omega_0 \in L^2(\mathcal{Z}, \mathcal{Z}, p)$ where $p$ is the initialization probability measure. The corresponding inner product is 
\[\langle \Omega, \Xi \rangle_{L^2(\mathcal{Z}, \mathcal{Z}, p)} = \mathbb{E}_{z \sim p} [\langle \Omega(z), \Xi(z)\rangle_{\mathcal{Z}}].\]
$G$ naturally acts on $L^2(\mathcal{Z}, \mathcal{Z}, p)$ by the conjugation action
\[(g\cdot \Omega)(z) = g \cdot [\Omega(g^{-1} \cdot z)].\]
To check that this conjugation is an action, first note that the action by $g$ is isotropic since $G \leq O(n)$:
\begin{align*}
    \int \|(g \cdot\Omega)(z)\|^2 dp(z) &= \int \|(g \cdot\Omega)(z)\|^2 dp(z) \\
    &= \int \|g \cdot (\Omega(g^{-1}(z)))\|^2dp(g\cdot z) \\
    &= \int \|\Omega(z)\|^2 dp(z).
\end{align*}
Here, we used invariance of $g$ under $p$, and also that $\|g \cdot y\|^2 = \langle g \cdot y, g \cdot y \rangle = \langle y, y \rangle = \|y\|^2$ from the adjoint. The required composition and identity properties of the action are clear. Moreover, the action is linear, and we have the adjoint property
\begin{align*}
    \langle g \cdot \Omega, \Xi \rangle_{L^2(p)} &= \int \langle g\cdot \Omega( g^{-1}\cdot z), \Xi(z) \rangle dp(z) \\
    &= \int \langle \Omega(g^{-1}\cdot z), g^{-1}\cdot \Xi(z) \rangle dp(z) \\
    &= \int \langle \Omega(z), g^{-1} \Xi(g\cdot z) \rangle dp(z) \\
    &= \langle \Omega, g^{-1} \cdot \Xi \rangle.
\end{align*}

Assume that $\Omega_0$ is $G$-equivariant. Then the next iterate satisfies the following
\begin{equation}
    \Omega_1 = \Omega_0 - \eta \frac{d \mathcal{L}}{d\Omega}(\Omega_0).
\end{equation}

But we also have for any $g$ and $\Omega$,
\begin{alignat*}{2}
    \mathcal{L}(\Omega) &= \mathbb{E}_{z_0\sim p}[L(\Omega(z_0))] &&\\
    &= \mathbb{E}_{z_0\sim p}[L(g \cdot \Omega(z_0))] && \quad\text{ since } L(z) = L(g \cdot z) \\
    &= \mathbb{E}_{z_0\sim p}[L(g \cdot \Omega(g^{-1} \cdot z_0))] && \quad \text{ since } p(z) = p(g \cdot z) \\
    &= \mathbb{E}_{z_0\sim p}[L((g \cdot \Omega)(z_0))] && \\
    &= \mathcal{L}(g \cdot \Omega). &&
\end{alignat*}
Hence if $\Omega_0 = g \cdot \Omega_0$ for all $g$, then 
\begin{equation}
    \frac{d}{d \Omega}\mathcal{L}(\Omega_0) = \frac{d}{d (g \cdot \Omega)}\mathcal{L}(\Omega_0)
\end{equation}
Since the variation of the $\Omega$ and $g \cdot \Omega$ is the same at $\Omega_0$, we have that $g \cdot \Omega_1 = \Omega_1$, i.e. $\Omega_1$ is $G$-equivariant. This argument can be repeated to get that $\Omega_n$ is $G$-equivariant (under conjugation) for all $n \in \mathbb{N}$. This is summarized in the following proposition.

\begin{proposition}\label{prop:iterative_invariance}
    Let $(\mathcal{Z}, \langle \cdot,\cdot \rangle)$ be a Hilbert parameter space. Suppose the following hold:
    
    \begin{enumerate}
        \item $G$ a group acting linearly on $\mathcal{Z}$ with an adjoint;
        \item The loss function $L:\mathcal{Z} \rightarrow \R$ and  are stable under $G$, that is, $L(g \cdot z) = L(z)$ for any $g \in G$ and $z \in \mathcal{Z}$;
        \item The initialization distribution $z^{(0)} \sim (\mathcal{Z}, p)$ is stable under $G$, that is, the laws $p(z^{(0)})$ and $p(g \cdot z^{(0)})$ coincide for any $g \in G$.
    \end{enumerate}

    Let $\Omega_0:\mathcal{Z} \rightarrow \mathcal{Z}$ be an optimization iteration that is $G$-equivariant, such that $\Omega_0(g \cdot z) = g \cdot \Omega_0(z)$ for all $z \in \mathcal{Z}, g \in G$. If $\Omega_0$ is evolved using a gradient step (with step-size $\eta$) to minimize 
    \[\mathcal{L}(\Omega) = \mathbb{E}_{z_0 \sim p} [L(z_1)],\]
    then the evolved optimizer $\Omega_1 = \Omega_0 - \eta d\mathcal{L}/d\Omega(\Omega_0)$ is also $G$-equivariant.
\end{proposition}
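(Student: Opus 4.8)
The plan is to work in the Hilbert space $\mathcal{H} = L^2(\mathcal{Z}, \mathcal{Z}, p)$ of optimizer maps and to show that the $L^2$-gradient $\nabla\mathcal{L}$ \emph{intertwines} the conjugation action, so that the equivariance of $\Omega_0$ is inherited first by the gradient and hence by $\Omega_1$. Write $T_g:\mathcal{H}\to\mathcal{H}$ for the conjugation operator $(T_g\Omega)(z) = g\cdot[\Omega(g^{-1}\cdot z)]$. The computations already carried out above furnish exactly the two structural facts I need: the isotropy calculation shows $T_g$ is a linear isometry of $\mathcal{H}$, and the adjoint calculation shows $\langle T_g\Omega,\Xi\rangle_{\mathcal{H}} = \langle\Omega, T_{g^{-1}}\Xi\rangle_{\mathcal{H}}$, so $T_g^* = T_{g^{-1}} = T_g^{-1}$; in other words each $T_g$ is an orthogonal operator on $\mathcal{H}$.

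First I would record the invariance of the functional, $\mathcal{L}(T_g\Omega) = \mathcal{L}(\Omega)$ for every $\Omega\in\mathcal{H}$ and $g\in G$; this is precisely the displayed chain of equalities, which uses only $L(g\cdot z)=L(z)$ and the $G$-stability of $p$. The key step is to differentiate this identity. Assuming $\mathcal{L}$ is Fréchet differentiable on $\mathcal{H}$, applying the chain rule to $\Omega\mapsto \mathcal{L}(T_g\Omega)$ gives, for every perturbation direction $\Xi$,
\[
\langle \nabla\mathcal{L}(T_g\Omega), T_g\Xi\rangle_{\mathcal{H}} = \langle\nabla\mathcal{L}(\Omega), \Xi\rangle_{\mathcal{H}}.
\]
Moving $T_g$ across the inner product by its adjoint, using $T_g^*=T_g^{-1}$, and invoking the arbitrariness of $\Xi$ yields the equivariance of the gradient, $\nabla\mathcal{L}(T_g\Omega) = T_g\,\nabla\mathcal{L}(\Omega)$.

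It then remains to evaluate at the equivariant point $\Omega_0$. Since $\Omega_0$ is $G$-equivariant, it is a fixed point of the conjugation action, $T_g\Omega_0=\Omega_0$, so the intertwining relation specializes to $\nabla\mathcal{L}(\Omega_0) = T_g\,\nabla\mathcal{L}(\Omega_0)$; that is, the gradient at $\Omega_0$ is itself $G$-equivariant. Because $T_g$ is linear, the gradient update preserves this fixed-point property:
\[
T_g\Omega_1 = T_g\Omega_0 - \eta\,T_g\,\nabla\mathcal{L}(\Omega_0) = \Omega_0 - \eta\,\nabla\mathcal{L}(\Omega_0) = \Omega_1,
\]
which is exactly the claim that $\Omega_1$ is $G$-equivariant, and iterating gives the equivariance of $\Omega_n$ for all $n$.

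The main obstacle is making the differentiation step rigorous: one must justify that $\mathcal{L}$ is Fréchet (or at least Gateaux) differentiable on $\mathcal{H}=L^2(\mathcal{Z},\mathcal{Z},p)$ and that its derivative is represented by a genuine element of $\mathcal{H}$, which requires mild regularity on $L$ (differentiability together with integrability sufficient to differentiate under the expectation). Once the gradient is a bona fide Hilbert-space vector and $T_g$ is recognized as orthogonal, the equivariance transfer is purely formal. A secondary point worth stating carefully is that the argument only uses that $\Omega_0$ lies in the closed linear fixed-point subspace $\{\Omega\in\mathcal{H} : T_g\Omega=\Omega\ \text{for all}\ g\in G\}$, and that the gradient of a $G$-invariant functional, evaluated on this subspace, lands back in it; this is precisely what keeps the gradient iterates inside the equivariant subspace.
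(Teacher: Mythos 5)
Your proof is correct and takes essentially the same route as the paper: invariance of $\mathcal{L}$ under the conjugation action, the fact that this action is a linear isometry with adjoint $T_g^* = T_{g^{-1}}$ on $L^2(\mathcal{Z},\mathcal{Z},p)$, and preservation of the fixed-point (equivariant) subspace under the gradient step. Your chain-rule/adjoint derivation of the intertwining relation $\nabla\mathcal{L}(T_g\Omega) = T_g\,\nabla\mathcal{L}(\Omega)$ is in fact a precise rendering of the paper's informal step that ``the variation of $\Omega$ and $g\cdot\Omega$ is the same at $\Omega_0$,'' so your version is, if anything, the more rigorous one (modulo the differentiability caveat you correctly flag, which the paper implicitly assumes by writing $d\mathcal{L}/d\Omega$ in the statement).
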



\subsection{Application: Deep Neural Networks}\label{ssec:EMDDNN}
For the sake of exposition with a concrete example, let us consider a two-hidden-layer dense neural network of the form
\[N(x; A_i, b_i) = A_3\sigma_2(A_2\sigma_1(A_1 x + b_1)+b_2)+b_3,\]
where $x \in \R^{d_0}$, $\sigma_i$ are coordinate-wise or otherwise permutation-invariant activation functions such as softmax, and $A_i \in \R^{d_{i} \times d_{i-1}},\, b_i \in \R^{d_i}$ are the weights and biases. In this case, the parameter space will be 
\[\mathcal{Z} = \R^d,\, d = \sum_{i=1}^3 \left(d_{i-1}d_i + d_i\right).\]
Let us consider a least-squares regression problem with data $(x_j, y_j),\, j=1,...,k$, which can be written as minimizing the loss 
\[L(z) = \sum_{j=1}^k \left[N(x_j; z) - y_j\right]^2.\]
Suppose that the weights are initialized entry-wise i.i.d. for each $A_i, b_i$, such as in common initializations where $A_i \sim \mathcal{N}(0, \eta_i^2 I_{d_{i-1}d_i \times d_{i-1}d_i})$ or $A_i \sim \text{Uniform}(-d_{i-1}^{-1/2}, d_{i-1}^{-1/2})$. Consider the symmetry group $G \simeq \mathrm{Sym}(d_1) \times \mathrm{Sym}(d_2)$, consisting of permutations of the intermediate feature maps with appropriate permutations on the weights and biases, i.e. of permutations $\rho_1, \rho_2$ on $[d_1], [d_2]$ respectively with the group action as
\begin{align*}
    &g_{\rho_1, \rho_2}(z)[A_1]_{p,q} = (A_1)_{p, \rho_1(q)}, \\
    &g_{\rho_1, \rho_2}(z)[A_2]_{p,q} = (A_2)_{\rho_1(p), \rho_2(q)}, \\
    &g_{\rho_1, \rho_2}(z)[A_3]_{p,q} = (A_3)_{\rho_2(p), q}, \\
    &g_{\rho_1, \rho_2}(z)[b_1]_{q} = (b_1)_{\rho_1(q)}, \\
    &g_{\rho_1, \rho_2}(z)[b_2]_{q} = (b_2)_{\rho_2(q)}, \\
    &g_{\rho_1, \rho_2}(z)[b_3]_{q} = (b_3)_{q}. \\
\end{align*}
We can check directly that this group action satisfies the requirements of \Cref{prop:iterative_invariance}:
\begin{itemize}
    \item (\textit{Linearity}) Permutation of elements is linear.
    \item (\textit{Initialization Invariance}) Follows from i.i.d. property of entries of $A_i, b_i$.
    \item (\textit{Loss Equivariance}) Follows from $\sigma_i$ acting coordinatewise and invariance of sums under permutation.
\end{itemize}

Consider the simple approach of scaling each component of the gradient by a learned constant, i.e. MD with a mirror map of the form $\psi = x^\top Dx$ for some learned positive definite diagonal $D$. Notice that starting with $D = I$ yields gradient descent, which is equivariant under $G$. Using \Cref{prop:iterative_invariance}, we find that $D$ will continue to be $G$-equivariant. In other words, each of the components of $D$ must be fixed under permutation actions of the form $(\rho_1, \rho_2) \in G$. 

We can explicitly demonstrate this experimentally on a small-scale neural network. In particular, we consider training a classifier using a three-layer neural network with fully connected layers on the 2D moons data set. We arbitrarily choose 50 dimensions for the middle feature layer. The neural network takes the form 
\[N(x; A_i, b_i) = \sigma_2(A_2\sigma_1(A_1 x + b_1)+b_2),\]
where $A_i \in \R^{d_{i} \times d_{i-1}},\, b_i \in \R^{d_i}$, $d_0 = 2, d_1 = 50, d_2 = 1$. In this case, we choose $\sigma_1$ to be a ReLU activation and $\sigma_2$ to be a log-softmax. We train using the binary cross-entropy loss. Note that in the case of a log-softmax, which takes the form
\[\rm{LogSoftmax}(x)_i = \log \left(\frac{\exp(x_i)}{\sum_{j}\exp(x_j)}\right),\]
the activation is still equivariant under permutations in $G$, as the sum of the exponentials stays the same. Therefore, loss equivariance holds and the above theory given by \Cref{prop:iterative_invariance} applies.



Using the diagonal quadratic LMD introduced above, we thus expect from the equivariance theory that the diagonal weights in $D$ will be invariant under the effect of any permutations $(\rho_1)$ on $[d_1]$. Using superscripts to denote the weights corresponding to each learnable neural network component, we thus have that for any indices $i \in [d_1]=[2], j,j' \in [d_2]=[50], k \in [d_3] = [1]$:
\begin{gather*}
    (D^{A_1})_{i,j} = (D^{A_1})_{i,j'} \\
    (D^{b_1})_j = (D^{b_1})_{j'}\\
    (D^{A_2})_{j,k} = (D^{A_2})_{j',k}.
\end{gather*}

Therefore, from $G$-equivariance of LMD with diagonal weighting $D$, we have that the weights are constant across the orbits of $\rho_1$, and are thus fully determined by the weights $(D^{A_1})_{1,1},(D^{A_1})_{2,1}, (D^{b_1})_{1}, (D^{A_2})_{1,1},(D^{b_2})_{1}$. This reduces the number of variables that need to be learned from $\dim(D) = 201$ to only 5. 
\begin{figure}
    \centering
    \includegraphics[width=0.8\linewidth]{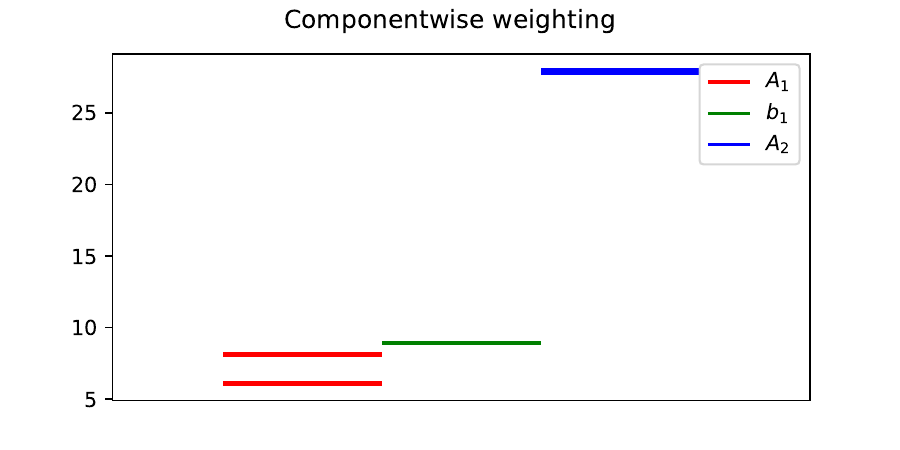}
    \caption{Componentwise gradient weightings learned for a two layer neural network on 2D moons data. Each band consists of 50 datapoints, corresponding to elements of the weight matrix $D$ for each neural network parameter group. We observe strong grouping of the weightings within each parameter group. In particular, we find that the weightings of $A_1$ are split into two groups, which correspond to the two input dimensions.}
    \label{fig:equivariant_test}
\end{figure}
The equivariance effect can be clearly seen in \Cref{fig:equivariant_test}, which empirically demonstrates what happens if we train all the variables of $D$ without directly enforcing equivariance. By initializing $D$ to be the identity, the initial mapping $\Omega_0:\mathcal{Z} \rightarrow \mathcal{Z}$ induced by $D$ is gradient descent, which we have shown before to be equivariant under permutations of the intermediate feature layer. The theory presented thus suggests that the optimization path of $\Omega$ (or in this case, of $D$), will continue to be equivariant under such permutations. Indeed, at the end of the optimization for $D$, we observe that the weights do indeed satisfy approximate equivariance, as demonstrated by the tight grouping phenomenon of the weights. We observe that the weights $D^{A_1}$ become two groups, and $D^{b_1}$, $D^{A_2}$ becoming one group each, which supports the theory that if the initial map $\Omega_0$ is equivariant, then so are subsequent maps after training.

\rev{
We note that this analysis can be used to partially explain the existing heuristic method of applying mirror maps to neural networks, such as using $p$-norms $\psi(w)=\|w\|_p^p$ \citep{azizan2021stochastic}, squared $p$-norms $\psi(w)=\|w\|_p^2$ \citep{d2021stochastic,gunasekar2018characterizing}, or element-wise maps induced by projections such as hyperbolic tangent or softmax function \citep{ajanthan2021mirror}. For these maps, when considering the weights as vectors, permuting components does not change the value of the mirror map (though general orthogonal transformations will). This permutation symmetry over all parameters can be thought of as a special case of the presented theory which suggests only equivariance within a layer.}

\section{Experiments: Neural Network Training using Equivariance}\label{sec:experimentsNN}
We consider the problem of training LMD on neural networks, based on the group equivariance demonstrated in \Cref{sec:equivariant}. This is a challenging non-convex optimization task, where the approximate theory for LMD (and its extensions) does not apply, due to significant errors that may occur from small perturbations in the parameters. This motivates us to consider simpler yet exact mirror maps to learn, which can be done layer-wise by the analysis in \Cref{sec:equivariant}. We demonstrate competitive performance with Adam, one of the most widely used optimizers for neural network training \citep{kingma2014adam}.

\subsection{Deep Fully Connected Neural Network}
We first consider training a classifier neural network on the MNIST image dataset. Neural networks are usually trained using minibatched gradients, as computing gradients over the whole dataset is infeasible. This is a prime use case for the stochastic variants of LMD. The neural network to train as well as the LSMD parameterization are detailed as follows. 

\begin{algorithm}
\caption{Learned AMD with minibatched gradients (LAMD*)}\label{alg:LAMD_minibatched}
\begin{algorithmic}[1]
\Require Input $\tilde{x}^{(0)} = \tilde{z}^{(0)} = x^{(0)} \in \mathcal{X}$, parameter $r \ge 3$, step-sizes $t_k$, number of iterations $K$
\State $z^{(0)} = \nabla M_\theta(\tilde{z}^{(0)})$
\For{$k = 0, ..., K$}
    \State $x^{(k+1)} = \lambda_k \nabla M_\vartheta^* ({z}^{(k)}) + (1-\lambda_k) \tilde{x}^{(k)}$ with $\lambda_k = \frac{r}{r+k}$
    \State $z^{(k+1)} = z^{(k)} - \frac{kt_k}{r}\nabla f_k(x^{(k+1)})$
    \State $\tilde{x}^{(k+1)} = x^{(k+1)} - \gamma t_k \nabla f_k(x^{(k+1)}) $
\EndFor
\State \textbf{return} $x^{(K+1)}= \lambda_K \nabla M_\vartheta^* ({z}^{(K)}) + (1-\lambda_K) \tilde{x}^{(K)}$
\end{algorithmic}
\end{algorithm}%

\textbf{Neural network architecture.} The number of features in each layer of the neural network is 784--50--40--30--20--10, with dense linear layers with bias between the layers and ReLU activations, for a total of 43350 parameters. The MNIST images are first flattened (giving $28\times28 = 784$ features), before being fed into the network. The function objective corresponding to a neural network is the cross-entropy loss for classification over the MNIST dataset. For LSMD, we associate each layer with two splines -- one for the weight matrix and one for the bias vector. This results in a total of 10 learned splines, a total of 400 learnable parameters for LSMD.

\textbf{LMD parameterization.} Utilizing the reduced parameterization as suggested by \Cref{sec:equivariant}, we can consider only layer-wise mirror maps $\Psi$. This allows us to use more complicated mirror maps from $\R \rightarrow \R$. One parameterization is to use \emph{monotonic splines} $\sigma$ as mirror maps, which are derivatives of convex mirror potentials $\nabla \psi = \sigma$. We directly model the mirror map using a knot-based piecewise-linear spline parameterization as \cite{goujon2022neural}, where knots are defined at pre-determined locations, and the spline is (uniquely) determined by the value of $\sigma$ at each knot. We note that since the monotonic spline $\sigma:\R \rightarrow \R$ is continuous, the corresponding mirror potential given by the integral $\psi(x) = \int_0^x \sigma(t) \dd{t}$ is $\mathcal{C}^1$. For these experiments, we use $41$ knots equispaced on the interval $[-1,1]$, with gap $0.05$, which can be parameterized using 40 parameters. \rev{More details on the construction of $\sigma$ can be found in \Cref{app:mono}.}


The goal of LSMD is to train neural networks with architecture faster, given MNIST data. The function class that we wish to optimize over is thus
\begin{equation}
    \mathcal{F} =\left\{ \sum_{k=1}^{N} \mathcal{L}(\mathcal{N}_k) \,\middle\vert\, \text{neural networks $\mathcal{N}_0$} \right\},
\end{equation}
where $\mathcal{L}$ is the cross-entropy of a neural network (on an image mini-batch), and $\mathcal{N}_k$ is the neural network after evolving the parameters through LSMD for $k$ iterations. Note that the number of iterations that we can unroll is significantly higher than in the image processing examples, due to the dimensionality reduction from the equivariant theory. The minibatching process means that we are training using SMD instead of the MD framework, similarly to how SGD is used instead of GD to train neural networks on large datasets. We are allowed to use three epochs of MNIST to train, which we find to be sufficient for these experiments. The MNIST batch-size is taken to be 500, resulting in unrolling $N=360$ steps. The neural networks $\mathcal{N}_0$ are initialized using the default PyTorch initialization, where a dense layer from $C_{\text{in}}$--$C_{\text{out}}$ features has the entries of its weight matrix $A$ and bias matrix $b$ initialized from $\text{Uniform}(-C_{\text{in}}^{-1}, C_{\text{in}}^{-1})$. We use 20 such neural network initializations as a batch for each LMD training iteration, and average the cross-entropy over each neural network as our loss function.


\begin{figure}
    \centering
    \subfloat[\centering Cross-entropy loss]{{\includegraphics[width=0.5\linewidth]{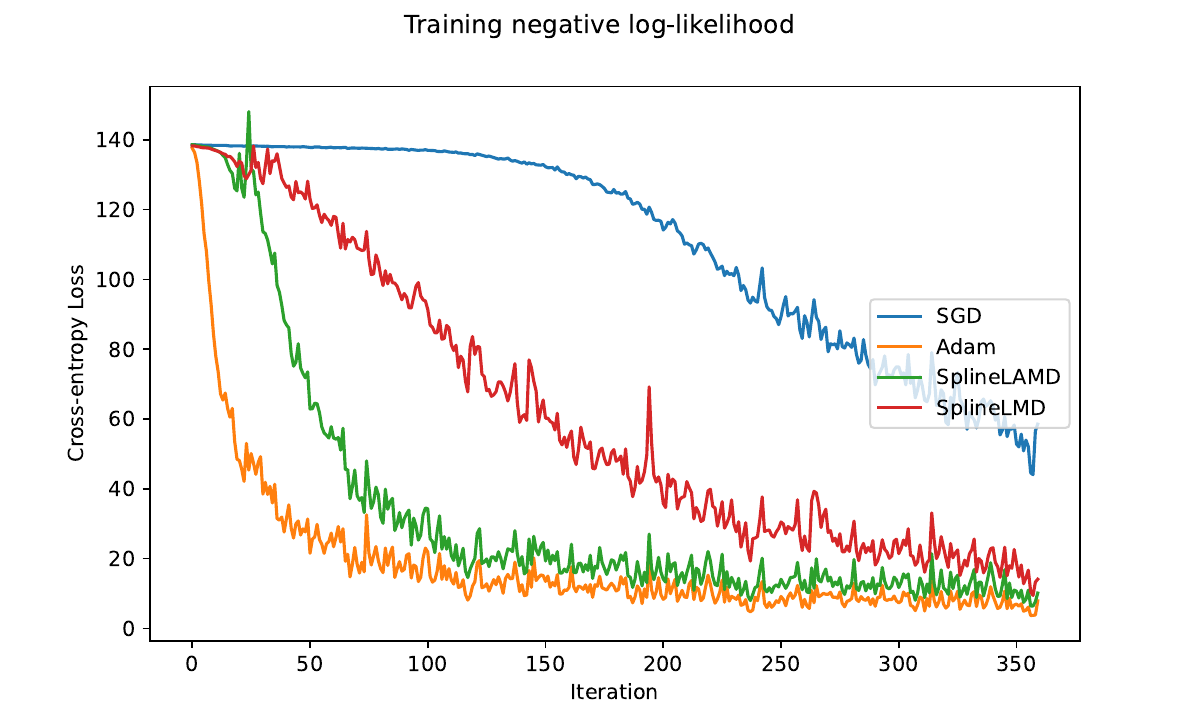}}}
    \subfloat[\centering Train accuracy]{{\includegraphics[width=0.5\linewidth]{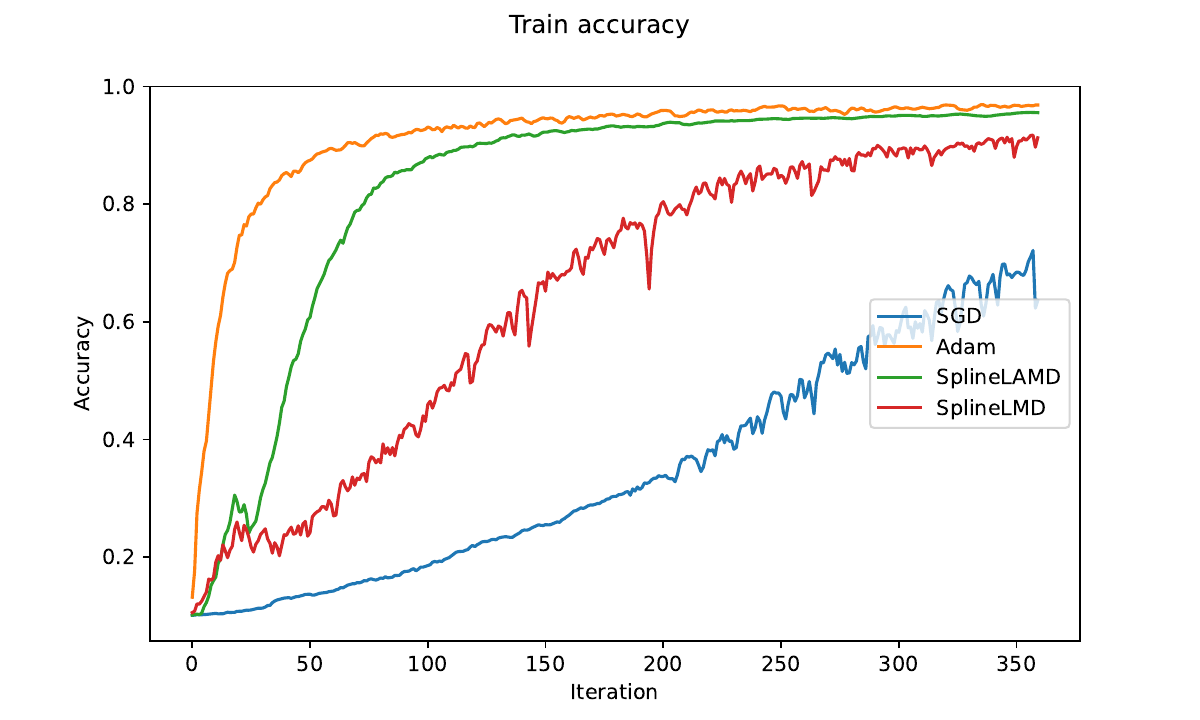}}}
    \caption{Spline-based LMD applied to training a 4-hidden-layer dense neural network for MNIST classification. We observe a significant acceleration of LAMD as compared to LMD. There is a slight performance gap between LAMD and Adam on this task. }
    \label{fig:5layerNN}
\end{figure}

\Cref{fig:5layerNN} shows the performance of LSMD on this task, as well as minibatched LAMD* \Cref{alg:LAMD_minibatched} applied with the spline-based mirror maps learned using the LSMD framework. LMD and LAMD* are compared against: SGD with learning rate $1\times 10^{-1}$, and Adam with learning rate $1 \times 10^{-2}$, where other parameters are kept as default. The values are averaged over 20 neural network initializations. We observe that Adam with this learning rate is able to train this network architecture very rapidly. LAMD* is able to almost match the performance of Adam at the end of the 3 training epochs, while both LAMD* and LMD significantly outperform SGD. 


\subsection{Convolutional Neural Network}\label{ssec:CNN}
We additionally consider LMD to train a 2-layer convolutional neural network. We impose a diagonal quadratic LMD prior on the weights, similarly to \Cref{ssec:EMDDNN}. The CNN is taken to have a single convolutional layer with 8 channels and kernel size 3, a max pooling layer with window size 2, followed by a dense layer from 288 features to 10 output features. The inputs of the CNN are MNIST images that have been downscaled to have dimension $14\times 14$. The target is to minimize the cross-entropy loss of the CNN after 2 epochs of MNIST, with mini-batch size 1000, resulting in a total unrolling count of $N=120$.

The equivariance here is due to the permutation invariance of each of the eight $3\times 3$ kernels. Therefore, we can reduce the number of parameters of LMD by tying the diagonal weights across the channels, reducing the number of parameters for this layer from $8\times (9+1)=72$ to only 10, with 9 from the sliding window weights and 1 from the bias term. While not strictly induced by the training problem\footnote{Since the layer before the dense layer is a convolutional layer (followed by max-pooling), the features in this intermediate layer still contain local information. Therefore, we do not have permutation invariance of the features.}, we additionally impose equivariance on the dense layer by tying the 288 features together, reducing the number of parameters from $288\times 10 + 10$ to $1\times 10 + 10$. In total, we learn 30 parameters.

\Cref{fig:EMDCNN} shows the performance of LMD and LAMD* when averaged over training 20 randomly initialized CNN problems, trained for 2 epochs of MNIST. We observe competitive performance with Adam for both LAMD* and LMD. Interestingly, we observe that the testing accuracy of the training result of LAMD* is marginally higher than that of Adam. This distinctly highlights the potential of learning the geometry for gradient-based methods, as we achieve significant acceleration with only 30 parameters to the point of being competitive with Adam, while retaining a principled approach to learning the model. 

\begin{figure}
    \centering
    \subfloat[\centering Cross-entropy loss]{{\includegraphics[width=0.5\linewidth]{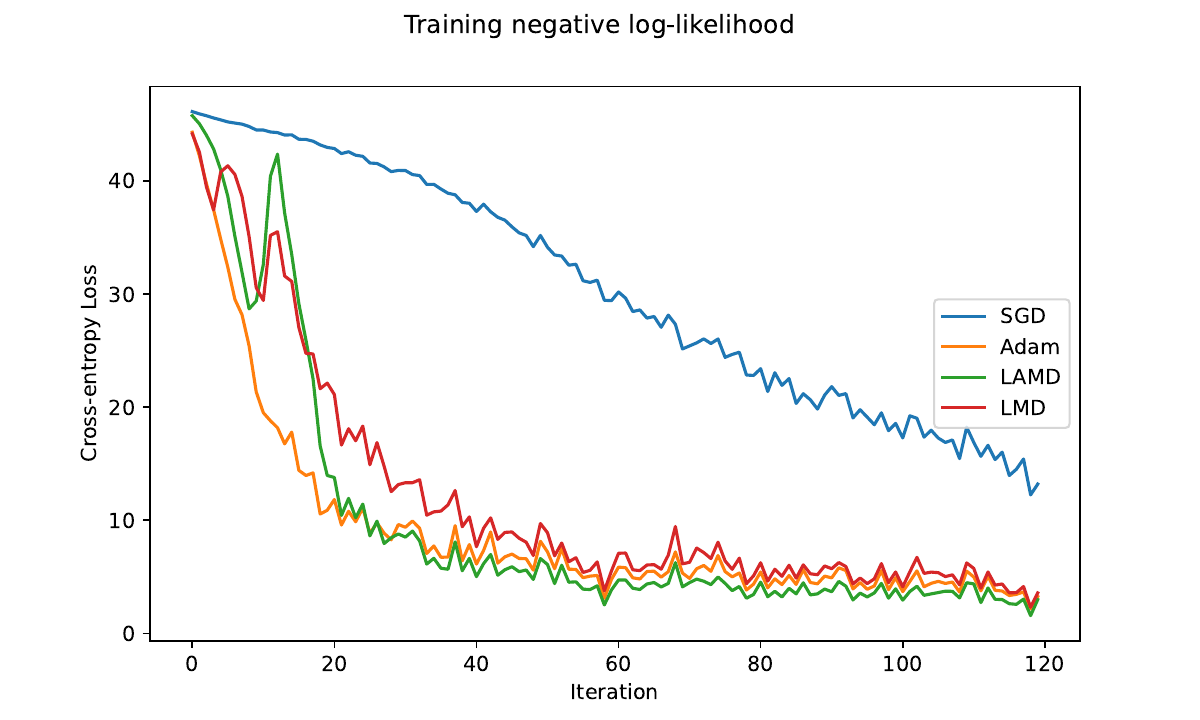}}}
    \subfloat[\centering Test accuracy]{{\includegraphics[width=0.5\linewidth]{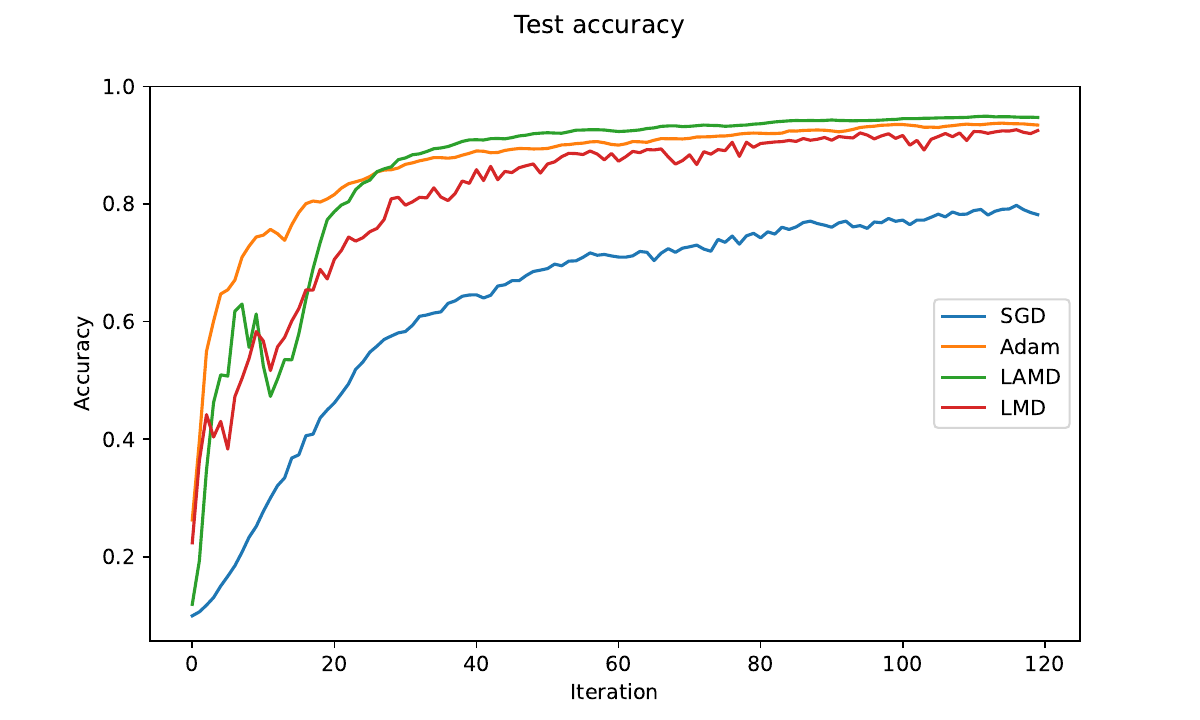}}}
    \caption{Diagonal LMD and LAMD applied to training one-hidden-layer convolutional neural networks with 2 epochs of MNIST, averaged over 20 runs. We observe that LAMD can achieve higher test accuracy and lower training loss by the end of the 2 training epochs. Both LMD and LAMD are competitive with Adam in this example.}
    \label{fig:EMDCNN}
\end{figure}

We note that the size of the CNN is limited due to the memory constraints of LMD. To train one iteration of LMD, all the intermediate activations of the neural network need to be stored. In the case of a convolutional layer, the intermediate layer is also image-like, which results in very high memory usage.

\subsection{Few-Shot Learning}\label{ssec:fewshot}
\rev{To further compare the proposed LMD methods on non-convex problems, we consider the problem of few-shot learning, where a classifier is trained on limited training data. We utilize the Omniglot dataset, consisting of 20 instances each of 1623 characters \citep{lake2011one}. The characters are split into training and testing sets of 1200 and 423 characters respectively as in \cite{finn2017model}. We consider a 5-shot 10-way problem to utilize the small CNN architecture as detailed in the previous section. In other words, a problem training instance consists of choosing 10 characters from the respective training or testing set, training the network to classify 5 instances of each character, and testing on the other 15 instances of the character.

As a baseline meta-learning approach, we consider the Reptile method \citep{nichol2018first}, which finds a good neural network initialization such that a standard optimization algorithm such as SGD can minimize the loss quickly. Further details on how the Reptile baseline is trained can be found in \Cref{app:reptile}.

The L2O approach of LMD instead considers a different approach of fast optimization by changing the optimization trajectory, allowing for random neural network initializations. We consider training LMD on the training character set in two settings: one with standard random initialization, and one from the initialization learned using Reptile. These methods are then compared with SGD with learning rate $1\times 10^{-1}$ and Adam $1\times 10^{-2}$ as before, with random initializations and Reptile initializations. We additionally consider LAMD with the same learned parameters from both random initialization and from the Reptile initialization. The methods are evaluated on $10^4$ randomly sampled 5-shot 10-way problems from characters drawn from the test split, with gradients calculated in a full-batch manner.

\Cref{fig:OmniglotCNN} contains a comparison of the equivariant LMD and LAMD methods with SGD and Adam, with random intializations and from the Reptile initialization. We note that LMD outperforms its SGD counterpart in both scenarios. Moreover, LAMD is able to achieve a significantly lower log-likelihood, outperforming Adam with random network initialization as well as with Reptile initializations. Nonetheless, there is still a gap between the LMD methods and Adam in terms of generalization performance, suggesting that the trajectory than LMD takes trades off generalization performance for faster training. Interestingly, LAMD with random initializations outperforms LAMD with the Reptile initialization, suggesting that random initializations plays a role in learning the geometry over a wider set of trajectories.

\begin{figure}
    \centering
    \subfloat[\centering Cross-entropy loss]{{\includegraphics[width=0.5\linewidth]{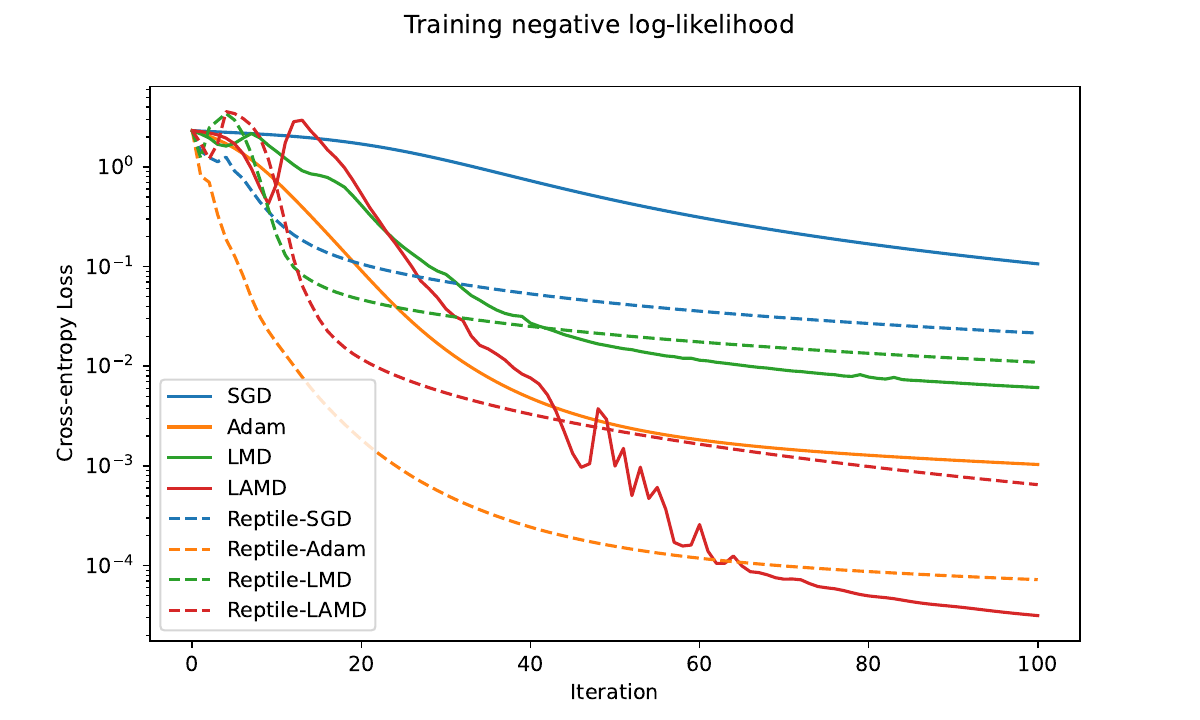}}}
    \subfloat[\centering Test accuracy]{{\includegraphics[width=0.5\linewidth]{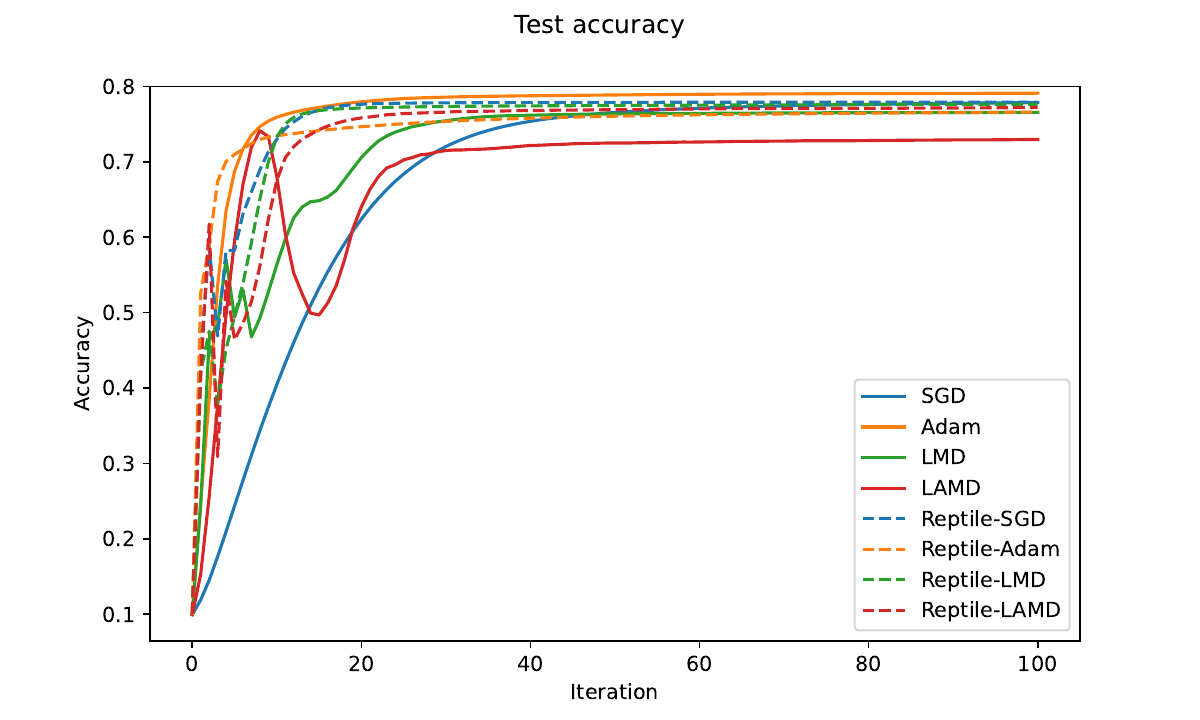}}}
    \caption{Training loss and test accuracy for 5-shot 10-way learning on the Omniglot dataset. We observe that LMD with Reptile initialization is able to achieve similar test accuracy to the compared methods. Moreover, LAMD is able to outperform Adam in terms of training loss, but seems to converge to a solution that generalizes worse. }
    \label{fig:OmniglotCNN}
\end{figure}
}

\section{Conclusions}
In this work, we present multiple extensions of the learned mirror descent method introduced in a recent work on learning to optimize \citep{tan2023data,tan2023robust}. In particular, we introduce accelerated and stochastic versions, as well as a new training paradigm that reduces the number of learnable parameters while maintaining the \textit{expressivity} of the mirror maps by exploiting symmetries. Numerical experiments demonstrate that the learned accelerated and stochastic MD methods are able to outperform their corresponding GD counterparts as well as previous LMD with the same mirror maps, and we empirically improve on the constant optimization gap by utilizing the dual domain during mirror steps. These experiments additionally demonstrate that mirror maps trained using LMD as in \citep{tan2023data,tan2023robust} are able to successfully generalize to different LMD-type optimization schemes. We empirically show that mirror maps respect group invariances under training to support our equivariant theory, and exploit this to train LMD on the non-convex problems of training deep and convolutional neural networks.

This work presented results as well as techniques for developing L2O schemes for large-scale applications. Given the various modifications to LMD presented, the mirror maps were pretrained using a basic scheme that penalizes the objective function as well as the forward-backward inconsistency for a fixed number of unrolled iterates \citep{tan2023data}. One consideration is the modification of the training scheme to instead use the accelerated or stochastic mirror descent rather than full-batch mirror descent, which could help in large-scale applications. Another possible direction would be to consider convergence properties of the approximate MD methods for non-convex functions, similarly to the non-mirrored cases presented in  \citet{berahas2017investigation,jin2017deep}. 

This work additionally presented an application of equivariance for dimensionality reduction of LMD, reducing the total number of parameters required to train. We demonstrated that the reduced parameterization arises naturally from common training scenarios, and that the resulting LMD is able to perform competitively with Adam. While we limit our equivariance to simple gradient-based methods, interesting future directions could consider equivariance under the scope of other optimization algorithms utilizing techniques such as scaling, momentum, or using proximal operators. Another direction to continue pushing LMD is to address the current limitations caused by needing to store intermediate activations, allowing for this method to be used for larger and more practical neural networks. 

%

\bibliographystyle{tmlr}
\bibliography{references}

\begin{thebibliography}{86}
\providecommand{\natexlab}[1]{#1}
\providecommand{\url}[1]{\texttt{#1}}
\expandafter\ifx\csname urlstyle\endcsname\relax
  \providecommand{\doi}[1]{doi: #1}\else
  \providecommand{\doi}{doi: \begingroup \urlstyle{rm}\Url}\fi

\bibitem[Ajanthan et~al.(2021)Ajanthan, Gupta, Torr, Hartley, and
  Dokania]{ajanthan2021mirror}
Thalaiyasingam Ajanthan, Kartik Gupta, Philip Torr, Richad Hartley, and Puneet
  Dokania.
\newblock Mirror descent view for neural network quantization.
\newblock In \emph{International conference on artificial intelligence and
  statistics}, pp.\  2809--2817. PMLR, 2021.

\bibitem[Allen-Zhu(2017)]{allen2017katyusha}
Zeyuan Allen-Zhu.
\newblock Katyusha: The first direct acceleration of stochastic gradient
  methods.
\newblock \emph{The Journal of Machine Learning Research}, 18\penalty0
  (1):\penalty0 8194--8244, 2017.

\bibitem[Allen-Zhu \& Orecchia(2014)Allen-Zhu and Orecchia]{allen2014linear}
Zeyuan Allen-Zhu and Lorenzo Orecchia.
\newblock Linear coupling: An ultimate unification of gradient and mirror
  descent.
\newblock \emph{arXiv preprint arXiv:1407.1537}, 2014.

\bibitem[Almeida et~al.(2021)Almeida, Winter, Tang, and
  Zaremba]{almeida2021generalizable}
Diogo Almeida, Clemens Winter, Jie Tang, and Wojciech Zaremba.
\newblock A generalizable approach to learning optimizers.
\newblock \emph{arXiv preprint arXiv:2106.00958}, 2021.

\bibitem[Amos et~al.(2017)Amos, Xu, and Kolter]{amos2017icnn}
Brandon Amos, Lei Xu, and J.~Zico Kolter.
\newblock Input convex neural networks.
\newblock In \emph{International Conference on Machine Learning}, volume~70 of
  \emph{Proceedings of Machine Learning Research}, pp.\  146--155. PMLR, 2017.

\bibitem[Amos et~al.(2023)]{amos2023tutorial}
Brandon Amos et~al.
\newblock Tutorial on amortized optimization.
\newblock \emph{Foundations and Trends{\textregistered} in Machine Learning},
  16\penalty0 (5):\penalty0 592--732, 2023.

\bibitem[Andrychowicz et~al.(2016)Andrychowicz, Denil, Gomez, Hoffman, Pfau,
  Schaul, Shillingford, and De~Freitas]{andrychowicz2016learning}
Marcin Andrychowicz, Misha Denil, Sergio Gomez, Matthew~W Hoffman, David Pfau,
  Tom Schaul, Brendan Shillingford, and Nando De~Freitas.
\newblock Learning to learn by gradient descent by gradient descent.
\newblock In \emph{Advances in neural information processing systems}, pp.\
  3981--3989, 2016.

\bibitem[Arjevani et~al.(2023)Arjevani, Carmon, Duchi, Foster, Srebro, and
  Woodworth]{arjevani2023lower}
Yossi Arjevani, Yair Carmon, John~C Duchi, Dylan~J Foster, Nathan Srebro, and
  Blake Woodworth.
\newblock Lower bounds for non-convex stochastic optimization.
\newblock \emph{Mathematical Programming}, 199\penalty0 (1-2):\penalty0
  165--214, 2023.

\bibitem[Az{\'e} \& Penot(1995)Az{\'e} and Penot]{aze1995uniformly}
Dominique Az{\'e} and Jean-Paul Penot.
\newblock Uniformly convex and uniformly smooth convex functions.
\newblock In \emph{Annales de la Facult{\'e} des sciences de Toulouse:
  Math{\'e}matiques}, volume~4, pp.\  705--730, 1995.

\bibitem[Azizan et~al.(2021)Azizan, Lale, and Hassibi]{azizan2021stochastic}
Navid Azizan, Sahin Lale, and Babak Hassibi.
\newblock Stochastic mirror descent on overparameterized nonlinear models.
\newblock \emph{IEEE Transactions on Neural Networks and Learning Systems},
  33\penalty0 (12):\penalty0 7717--7727, 2021.

\bibitem[Banert et~al.(2020)Banert, Ringh, Adler, Karlsson, and
  Oktem]{banert2020data}
Sebastian Banert, Axel Ringh, Jonas Adler, Johan Karlsson, and Ozan Oktem.
\newblock Data-driven nonsmooth optimization.
\newblock \emph{SIAM Journal on Optimization}, 30\penalty0 (1):\penalty0
  102--131, 2020.

\bibitem[Banert et~al.(2021)Banert, Rudzusika, {\"O}ktem, and
  Adler]{banert2021accelerated}
Sebastian Banert, Jevgenija Rudzusika, Ozan {\"O}ktem, and Jonas Adler.
\newblock Accelerated forward-backward optimization using deep learning.
\newblock \emph{arXiv preprint arXiv:2105.05210}, 2021.

\bibitem[Beck \& Teboulle(2009)Beck and Teboulle]{beck2009fast}
A.~Beck and M.~Teboulle.
\newblock Fast gradient-based algorithms for constrained total variation image
  denoising and deblurring problems.
\newblock \emph{IEEE Transactions on Image Processing}, 18\penalty0
  (11):\penalty0 2419--2434, 2009.

\bibitem[Beck \& Teboulle(2003)Beck and Teboulle]{beck2003mirror}
Amir Beck and Marc Teboulle.
\newblock Mirror descent and nonlinear projected subgradient methods for convex
  optimization.
\newblock \emph{Operations Research Letters}, 31\penalty0 (3):\penalty0
  167--175, 2003.

\bibitem[Bekkers et~al.(2018)Bekkers, Lafarge, Veta, Eppenhof, Pluim, and
  Duits]{bekkers2018roto}
Erik~J Bekkers, Maxime~W Lafarge, Mitko Veta, Koen~AJ Eppenhof, Josien~PW
  Pluim, and Remco Duits.
\newblock Roto-translation covariant convolutional networks for medical image
  analysis.
\newblock In \emph{Medical Image Computing and Computer Assisted
  Intervention--MICCAI 2018: 21st International Conference, Granada, Spain,
  September 16-20, 2018, Proceedings, Part I}, pp.\  440--448. Springer, 2018.

\bibitem[Ben-Tal et~al.(2001)Ben-Tal, Margalit, and Nemirovski]{ben2001ordered}
Aharon Ben-Tal, Tamar Margalit, and Arkadi Nemirovski.
\newblock The ordered subsets mirror descent optimization method with
  applications to tomography.
\newblock \emph{SIAM Journal on Optimization}, 12\penalty0 (1):\penalty0
  79--108, 2001.

\bibitem[Berahas et~al.(2017)Berahas, Bollapragada, and
  Nocedal]{berahas2017investigation}
Albert~S Berahas, Raghu Bollapragada, and Jorge Nocedal.
\newblock An investigation of newton-sketch and subsampled newton methods.
\newblock \emph{arXiv preprint arXiv:1705.06211}, 2017.

\bibitem[Bubeck(2015)]{bubeck2015convex}
S{\'e}bastien Bubeck.
\newblock Convex optimization: Algorithms and complexity.
\newblock \emph{Foundations and Trends{\textregistered} in Machine Learning},
  8\penalty0 (3-4):\penalty0 231--357, 2015.

\bibitem[Celledoni et~al.(2021)Celledoni, Ehrhardt, Etmann, Owren,
  Sch{\"o}nlieb, and Sherry]{celledoni2021equivariant}
Elena Celledoni, Matthias~J Ehrhardt, Christian Etmann, Brynjulf Owren,
  Carola-Bibiane Sch{\"o}nlieb, and Ferdia Sherry.
\newblock Equivariant neural networks for inverse problems.
\newblock \emph{Inverse Problems}, 37\penalty0 (8):\penalty0 085006, 2021.

\bibitem[Chambolle et~al.(2018)Chambolle, Ehrhardt, Richtarik, and
  Schonlieb]{chambolle2018stochastic}
Antonin Chambolle, Matthias~J Ehrhardt, Peter Richtarik, and Carola-Bibiane
  Schonlieb.
\newblock Stochastic primal-dual hybrid gradient algorithm with arbitrary
  sampling and imaging applications.
\newblock \emph{SIAM Journal on Optimization}, 28\penalty0 (4):\penalty0
  2783--2808, 2018.

\bibitem[Chen et~al.(2021)Chen, Tachella, and Davies]{chen2021equivariant}
Dongdong Chen, Juli{\'a}n Tachella, and Mike~E Davies.
\newblock Equivariant imaging: Learning beyond the range space.
\newblock In \emph{Proceedings of the IEEE/CVF International Conference on
  Computer Vision}, pp.\  4379--4388, 2021.

\bibitem[Chen et~al.(2022)Chen, Chen, Chen, Wang, Heaton, Liu, and
  Yin]{chen2022learning}
Tianlong Chen, Xiaohan Chen, Wuyang Chen, Zhangyang Wang, Howard Heaton, Jialin
  Liu, and Wotao Yin.
\newblock Learning to optimize: A primer and a benchmark.
\newblock \emph{The Journal of Machine Learning Research}, 23\penalty0
  (1):\penalty0 8562--8620, 2022.

\bibitem[Chen \& Hazan(2023)Chen and Hazan]{chen2023nonstochastic}
Xinyi Chen and Elad Hazan.
\newblock A nonstochastic control approach to optimization.
\newblock \emph{arXiv preprint arXiv:2301.07902}, 2023.

\bibitem[Chlebus(2009)]{chlebus2009approximate}
Edward Chlebus.
\newblock An approximate formula for a partial sum of the divergent p-series.
\newblock \emph{Applied Mathematics Letters}, 22\penalty0 (5):\penalty0
  732--737, 2009.

\bibitem[Cohen \& Welling(2016)Cohen and Welling]{cohen2016group}
Taco Cohen and Max Welling.
\newblock Group equivariant convolutional networks.
\newblock In \emph{International conference on machine learning}, pp.\
  2990--2999. PMLR, 2016.

\bibitem[Defazio(2016)]{defazio2016simple}
Aaron Defazio.
\newblock A simple practical accelerated method for finite sums.
\newblock In D.~D. Lee, M.~Sugiyama, U.~V. Luxburg, I.~Guyon, and R.~Garnett
  (eds.), \emph{Advances in Neural Information Processing Systems 29}, pp.\
  676--684. Curran Associates, Inc., 2016.

\bibitem[Denevi et~al.(2019)Denevi, Stamos, Ciliberto, and
  Pontil]{denevi2019online}
Giulia Denevi, Dimitris Stamos, Carlo Ciliberto, and Massimiliano Pontil.
\newblock Online-within-online meta-learning.
\newblock \emph{Advances in Neural Information Processing Systems}, 32, 2019.

\bibitem[D'Orazio et~al.(2021)D'Orazio, Loizou, Laradji, and
  Mitliagkas]{d2021stochastic}
Ryan D'Orazio, Nicolas Loizou, Issam Laradji, and Ioannis Mitliagkas.
\newblock Stochastic mirror descent: Convergence analysis and adaptive variants
  via the mirror stochastic polyak stepsize.
\newblock \emph{arXiv preprint arXiv:2110.15412}, 2021.

\bibitem[Driggs et~al.(2021)Driggs, Tang, Liang, Davies, and
  Sch\"onlieb]{driggs2020spring}
Derek Driggs, Junqi Tang, Jingwei Liang, Mike Davies, and Carola-Bibiane
  Sch\"onlieb.
\newblock A stochastic proximal alternating minimization for nonsmooth and
  nonconvex optimization.
\newblock \emph{SIAM Journal on Imaging Sciences}, 14\penalty0 (4):\penalty0
  1932--1970, 2021.

\bibitem[Duchi et~al.(2010)Duchi, Shalev-Shwartz, Singer, and
  Tewari]{duchi2010composite}
John~C Duchi, Shai Shalev-Shwartz, Yoram Singer, and Ambuj Tewari.
\newblock Composite objective mirror descent.
\newblock In \emph{COLT}, volume~10, pp.\  14--26. Citeseer, 2010.

\bibitem[Duchi et~al.(2012)Duchi, Agarwal, Johansson, and
  Jordan]{duchi2012ergodic}
John~C Duchi, Alekh Agarwal, Mikael Johansson, and Michael~I Jordan.
\newblock Ergodic mirror descent.
\newblock \emph{SIAM Journal on Optimization}, 22\penalty0 (4):\penalty0
  1549--1578, 2012.

\bibitem[Ene \& Nguyen(2022)Ene and Nguyen]{ene2022high}
Alina Ene and Huy~L Nguyen.
\newblock High probability convergence for accelerated stochastic mirror
  descent.
\newblock \emph{arXiv preprint arXiv:2210.00679}, 2022.

\bibitem[Finn et~al.(2017)Finn, Abbeel, and Levine]{finn2017model}
Chelsea Finn, Pieter Abbeel, and Sergey Levine.
\newblock Model-agnostic meta-learning for fast adaptation of deep networks.
\newblock In \emph{International conference on machine learning}, pp.\
  1126--1135. PMLR, 2017.

\bibitem[Gao et~al.(2022)Gao, Gouk, Lee, and Hospedales]{gao2022meta}
Boyan Gao, Henry Gouk, Hae~Beom Lee, and Timothy~M Hospedales.
\newblock Meta mirror descent: Optimiser learning for fast convergence.
\newblock \emph{arXiv preprint arXiv:2203.02711}, 2022.

\bibitem[Ghadimi \& Lan(2016)Ghadimi and Lan]{ghadimi2016accelerated}
Saeed Ghadimi and Guanghui Lan.
\newblock Accelerated gradient methods for nonconvex nonlinear and stochastic
  programming.
\newblock \emph{Mathematical Programming}, 156\penalty0 (1-2):\penalty0 59--99,
  2016.

\bibitem[Goujon et~al.(2022)Goujon, Neumayer, Bohra, Ducotterd, and
  Unser]{goujon2022neural}
Alexis Goujon, Sebastian Neumayer, Pakshal Bohra, Stanislas Ducotterd, and
  Michael Unser.
\newblock A neural-network-based convex regularizer for image reconstruction.
\newblock \emph{arXiv preprint arXiv:2211.12461}, 2022.

\bibitem[Gregor \& LeCun(2010)Gregor and LeCun]{gregor2010learning}
Karol Gregor and Yann LeCun.
\newblock Learning fast approximations of sparse coding.
\newblock In \emph{Proceedings of the 27th international conference on
  international conference on machine learning}, pp.\  399--406, 2010.

\bibitem[Gunasekar et~al.(2018)Gunasekar, Lee, Soudry, and
  Srebro]{gunasekar2018characterizing}
Suriya Gunasekar, Jason Lee, Daniel Soudry, and Nathan Srebro.
\newblock Characterizing implicit bias in terms of optimization geometry.
\newblock In \emph{International Conference on Machine Learning}, pp.\
  1832--1841. PMLR, 2018.

\bibitem[Gunasekar et~al.(2021)Gunasekar, Woodworth, and
  Srebro]{gunasekar2021mirrorless}
Suriya Gunasekar, Blake Woodworth, and Nathan Srebro.
\newblock Mirrorless mirror descent: A natural derivation of mirror descent.
\newblock In \emph{International Conference on Artificial Intelligence and
  Statistics}, pp.\  2305--2313. PMLR, 2021.

\bibitem[Gupta \& Roughgarden(2016)Gupta and Roughgarden]{gupta2016pac}
Rishi Gupta and Tim Roughgarden.
\newblock A pac approach to application-specific algorithm selection.
\newblock In \emph{Proceedings of the 2016 ACM Conference on Innovations in
  Theoretical Computer Science}, pp.\  123--134, 2016.

\bibitem[Hovhannisyan et~al.(2016)Hovhannisyan, Parpas, and
  Zafeiriou]{hovhannisyan2016magma}
Vahan Hovhannisyan, Panos Parpas, and Stefanos Zafeiriou.
\newblock {MAGMA}: Multilevel accelerated gradient mirror descent algorithm for
  large-scale convex composite minimization.
\newblock \emph{SIAM Journal on Imaging Sciences}, 9\penalty0 (4):\penalty0
  1829--1857, 2016.

\bibitem[Jacot et~al.(2018)Jacot, Gabriel, and Hongler]{jacot2018neural}
Arthur Jacot, Franck Gabriel, and Cl{\'e}ment Hongler.
\newblock Neural tangent kernel: Convergence and generalization in neural
  networks.
\newblock \emph{Advances in Neural Information Processing Systems}, 31, 2018.

\bibitem[Jin et~al.(2017)Jin, McCann, Froustey, and Unser]{jin2017deep}
Kyong~Hwan Jin, Michael~T McCann, Emmanuel Froustey, and Michael Unser.
\newblock Deep convolutional neural network for inverse problems in imaging.
\newblock \emph{IEEE Transactions on Image Processing}, 26\penalty0
  (9):\penalty0 4509--4522, 2017.

\bibitem[Khodak et~al.(2019)Khodak, Balcan, and Talwalkar]{khodak2019adaptive}
Mikhail Khodak, Maria-Florina~F Balcan, and Ameet~S Talwalkar.
\newblock Adaptive gradient-based meta-learning methods.
\newblock \emph{Advances in Neural Information Processing Systems}, 32, 2019.

\bibitem[Kingma \& Ba(2015)Kingma and Ba]{kingma2014adam}
Diederik~P Kingma and Jimmy Ba.
\newblock Adam: A method for stochastic optimization.
\newblock \emph{International Conference on Learning Representations}, 2015.

\bibitem[Krichene et~al.(2015)Krichene, Bayen, and Bartlett]{accelMD15Walid}
Walid Krichene, Alexandre Bayen, and Peter~L Bartlett.
\newblock Accelerated mirror descent in continuous and discrete time.
\newblock \emph{Advances in Neural Information Processing Systems}, 28, 2015.

\bibitem[Lake et~al.(2011)Lake, Salakhutdinov, Gross, and
  Tenenbaum]{lake2011one}
Brenden Lake, Ruslan Salakhutdinov, Jason Gross, and Joshua Tenenbaum.
\newblock One shot learning of simple visual concepts.
\newblock In \emph{Proceedings of the annual meeting of the cognitive science
  society}, volume~33, 2011.

\bibitem[Lan(2012)]{lan2012optimal}
Guanghui Lan.
\newblock An optimal method for stochastic composite optimization.
\newblock \emph{Mathematical Programming}, 133\penalty0 (1-2):\penalty0
  365--397, 2012.

\bibitem[Lan(2020)]{lan2020first}
Guanghui Lan.
\newblock \emph{First-order and stochastic optimization methods for machine
  learning}, volume~1.
\newblock Springer, 2020.

\bibitem[Lan \& Zhou(2015)Lan and Zhou]{lan2015optimal}
Guanghui Lan and Yi~Zhou.
\newblock An optimal randomized incremental gradient method.
\newblock \emph{arXiv preprint arXiv:1507.02000}, 2015.

\bibitem[Lan et~al.(2012)Lan, Nemirovski, and Shapiro]{lan2012validation}
Guanghui Lan, Arkadi Nemirovski, and Alexander Shapiro.
\newblock Validation analysis of mirror descent stochastic approximation
  method.
\newblock \emph{Mathematical programming}, 134\penalty0 (2):\penalty0 425--458,
  2012.

\bibitem[Lee et~al.(2018)Lee, Bahri, Novak, Schoenholz, Pennington, and
  Sohl-Dickstein]{lee2018deep}
Jaehoon Lee, Yasaman Bahri, Roman Novak, Samuel~S Schoenholz, Jeffrey
  Pennington, and Jascha Sohl-Dickstein.
\newblock Deep neural networks as {G}aussian processes.
\newblock In \emph{International Conference on Learning Representations}, 2018.

\bibitem[Lenc \& Vedaldi(2015)Lenc and Vedaldi]{lenc2015understanding}
Karel Lenc and Andrea Vedaldi.
\newblock Understanding image representations by measuring their equivariance
  and equivalence.
\newblock In \emph{Proceedings of the IEEE conference on computer vision and
  pattern recognition}, pp.\  991--999, 2015.

\bibitem[Leuschner et~al.(2019)Leuschner, Schmidt, and
  Erzmann]{leuschner2019deep}
Johannes Leuschner, Maximilian Schmidt, and David Erzmann.
\newblock Deep inversion validation library.
\newblock \emph{Software available from https://github.com/jleuschn/dival},
  2019.

\bibitem[Li \& Malik(2016)Li and Malik]{li2016learning}
Ke~Li and Jitendra Malik.
\newblock Learning to optimize.
\newblock In \emph{International Conference on Learning Representations}, 2016.

\bibitem[Li et~al.(2019)Li, Zhang, Wang, and Kumar]{li2019learning}
Yanjun Li, Kai Zhang, Jun Wang, and Sanjiv Kumar.
\newblock Learning adaptive random features.
\newblock In \emph{Proceedings of the AAAI Conference on Artificial
  Intelligence}, volume~33, pp.\  4229--4236, 2019.

\bibitem[Metz et~al.(2019)Metz, Maheswaranathan, Nixon, Freeman, and
  Sohl-Dickstein]{metz2019understanding}
Luke Metz, Niru Maheswaranathan, Jeremy Nixon, Daniel Freeman, and Jascha
  Sohl-Dickstein.
\newblock Understanding and correcting pathologies in the training of learned
  optimizers.
\newblock In \emph{International Conference on Machine Learning}, pp.\
  4556--4565. PMLR, 2019.

\bibitem[Nemirovski \& Yudin(1983)Nemirovski and Yudin]{nemirovsky1983problem}
Arkadi Nemirovski and David~Berkovich Yudin.
\newblock \emph{Problem Complexity and Method Efficiency in Optimization /
  translated by E.R. Dawson.}
\newblock Wiley-Interscience series in discrete mathematics. Wiley, Chichester,
  1983.
\newblock ISBN 0471103454.

\bibitem[Nemirovski et~al.(2009)Nemirovski, Juditsky, Lan, and
  Shapiro]{nemirovski2009robust}
Arkadi Nemirovski, Anatoli Juditsky, Guanghui Lan, and Alexander Shapiro.
\newblock Robust stochastic approximation approach to stochastic programming.
\newblock \emph{SIAM Journal on Optimization}, 19\penalty0 (4):\penalty0
  1574--1609, 2009.

\bibitem[Nesterov(1983)]{nesterov1983method}
Yurii~Evgen'evich Nesterov.
\newblock A method of solving a convex programming problem with convergence
  rate ${O}(1/k^2)$.
\newblock In \emph{Doklady Akademii Nauk}, volume 269, pp.\  543--547. Russian
  Academy of Sciences, 1983.

\bibitem[Nichol et~al.(2018)Nichol, Achiam, and Schulman]{nichol2018first}
Alex Nichol, Joshua Achiam, and John Schulman.
\newblock On first-order meta-learning algorithms.
\newblock \emph{arXiv preprint arXiv:1803.02999}, 2018.

\bibitem[Orabona et~al.(2015)Orabona, Crammer, and
  Cesa-Bianchi]{orabona2015generalized}
Francesco Orabona, Koby Crammer, and Nicolo Cesa-Bianchi.
\newblock A generalized online mirror descent with applications to
  classification and regression.
\newblock \emph{Machine Learning}, 99:\penalty0 411--435, 2015.

\bibitem[Paszke et~al.(2019)Paszke, Gross, Massa, Lerer, Bradbury, Chanan,
  Killeen, Lin, Gimelshein, Antiga, Desmaison, Kopf, Yang, DeVito, Raison,
  Tejani, Chilamkurthy, Steiner, Fang, Bai, and Chintala]{PyTorch}
Adam Paszke, Sam Gross, Francisco Massa, Adam Lerer, James Bradbury, Gregory
  Chanan, Trevor Killeen, Zeming Lin, Natalia Gimelshein, Luca Antiga, Alban
  Desmaison, Andreas Kopf, Edward Yang, Zachary DeVito, Martin Raison, Alykhan
  Tejani, Sasank Chilamkurthy, Benoit Steiner, Lu~Fang, Junjie Bai, and Soumith
  Chintala.
\newblock Pytorch: An imperative style, high-performance deep learning library.
\newblock In H.~Wallach, H.~Larochelle, A.~Beygelzimer, F.~d\textquotesingle
  Alch\'{e}-Buc, E.~Fox, and R.~Garnett (eds.), \emph{Advances in Neural
  Information Processing Systems 32}, pp.\  8024--8035. Curran Associates,
  Inc., 2019.

\bibitem[Raskutti \& Mukherjee(2015)Raskutti and
  Mukherjee]{raskutti2015information}
Garvesh Raskutti and Sayan Mukherjee.
\newblock The information geometry of mirror descent.
\newblock \emph{IEEE Transactions on Information Theory}, 61\penalty0
  (3):\penalty0 1451--1457, 2015.

\bibitem[Reddi et~al.(2018)Reddi, Kale, and Kumar]{reddi2018convergence}
Sashank~J Reddi, Satyen Kale, and Sanjiv Kumar.
\newblock On the convergence of adam and beyond.
\newblock In \emph{International Conference on Learning Representations}, 2018.

\bibitem[Robbins \& Monro(1951)Robbins and Monro]{robbins1951stochastic}
Herbert Robbins and Sutton Monro.
\newblock A stochastic approximation method.
\newblock \emph{The Annals of Mathematical Statistics}, 22\penalty0
  (3):\penalty0 400--407, 1951.

\bibitem[Rudin et~al.(1992)Rudin, Osher, and Fatemi]{rudin1992nonlinear}
Leonid~I Rudin, Stanley Osher, and Emad Fatemi.
\newblock Nonlinear total variation based noise removal algorithms.
\newblock \emph{Physica D: nonlinear phenomena}, 60\penalty0 (1-4):\penalty0
  259--268, 1992.

\bibitem[Sambharya et~al.(2023)Sambharya, Hall, Amos, and
  Stellato]{sambharya2023learning}
Rajiv Sambharya, Georgina Hall, Brandon Amos, and Bartolomeo Stellato.
\newblock Learning to warm-start fixed-point optimization algorithms.
\newblock \emph{arXiv preprint arXiv:2309.07835}, 2023.

\bibitem[Srebro et~al.(2011)Srebro, Sridharan, and
  Tewari]{srebro2011universality}
Nati Srebro, Karthik Sridharan, and Ambuj Tewari.
\newblock On the universality of online mirror descent.
\newblock \emph{Advances in Neural Information Processing Systems}, 24, 2011.

\bibitem[Strassen(1965)]{strassen1965existence}
Volker Strassen.
\newblock The existence of probability measures with given marginals.
\newblock \emph{The Annals of Mathematical Statistics}, 36\penalty0
  (2):\penalty0 423--439, 1965.

\bibitem[Su et~al.(2014)Su, Boyd, and Candes]{su2014differential}
Weijie Su, Stephen Boyd, and Emmanuel Candes.
\newblock A differential equation for modeling {N}esterov’s accelerated
  gradient method: theory and insights.
\newblock \emph{Advances in neural information processing systems}, 27, 2014.

\bibitem[Tan et~al.(2023{\natexlab{a}})Tan, Mukherjee, Tang, Hauptmann, and
  Sch{\"o}nlieb]{tan2023robust}
Hong~Ye Tan, Subhadip Mukherjee, Junqi Tang, Andreas Hauptmann, and
  Carola-Bibiane Sch{\"o}nlieb.
\newblock Robust data-driven accelerated mirror descent.
\newblock In \emph{2023 IEEE International Conference on Acoustics, Speech and
  Signal Processing (ICASSP)}, pp.\  1--5. IEEE, 2023{\natexlab{a}}.

\bibitem[Tan et~al.(2023{\natexlab{b}})Tan, Mukherjee, Tang, and
  Sch{\"o}nlieb]{tan2023data}
Hong~Ye Tan, Subhadip Mukherjee, Junqi Tang, and Carola-Bibiane Sch{\"o}nlieb.
\newblock Data-driven mirror descent with input-convex neural networks.
\newblock \emph{SIAM Journal on Mathematics of Data Science}, 5\penalty0
  (2):\penalty0 558--587, 2023{\natexlab{b}}.

\bibitem[Tang et~al.(2018)Tang, Golbabaee, Bach, and Davies]{tang2018rest}
Junqi Tang, Mohammad Golbabaee, Francis Bach, and Mike~E Davies.
\newblock Rest-{K}atyusha: Exploiting the solutions' structure via scheduled
  restart schemes.
\newblock In \emph{Advances in Neural Information Processing Systems}, pp.\
  427--438. Curran Associates, Inc., 2018.

\bibitem[Tang et~al.(2020)Tang, Egiazarian, Golbabaee, and
  Davies]{tang2020practicality}
Junqi Tang, Karen Egiazarian, Mohammad Golbabaee, and Mike Davies.
\newblock The practicality of stochastic optimization in imaging inverse
  problems.
\newblock \emph{IEEE Transactions on Computational Imaging}, 6:\penalty0
  1471--1485, 2020.

\bibitem[Wang \& Zhou(2006)Wang and Zhou]{wang2006total}
Yang Wang and Haomin Zhou.
\newblock Total variation wavelet-based medical image denoising.
\newblock \emph{International Journal of Biomedical Imaging}, 2006, 2006.

\bibitem[Woodworth \& Srebro(2016)Woodworth and Srebro]{woodworth2016tight}
Blake~E Woodworth and Nati Srebro.
\newblock Tight complexity bounds for optimizing composite objectives.
\newblock In \emph{Advances in Neural Information Processing Systems}, pp.\
  3639--3647, 2016.

\bibitem[Worrall et~al.(2017)Worrall, Garbin, Turmukhambetov, and
  Brostow]{worrall2017harmonic}
Daniel~E Worrall, Stephan~J Garbin, Daniyar Turmukhambetov, and Gabriel~J
  Brostow.
\newblock Harmonic networks: Deep translation and rotation equivariance.
\newblock In \emph{Proceedings of the IEEE conference on computer vision and
  pattern recognition}, pp.\  5028--5037, 2017.

\bibitem[Xu et~al.(2018)Xu, Wang, and Gu]{xu2018accelerated}
Pan Xu, Tianhao Wang, and Quanquan Gu.
\newblock Accelerated stochastic mirror descent: From continuous-time dynamics
  to discrete-time algorithms.
\newblock In \emph{International Conference on Artificial Intelligence and
  Statistics}, pp.\  1087--1096. PMLR, 2018.

\bibitem[Yang et~al.(2023)Yang, Chen, Zhu, He, Tao, Liang, and
  Wang]{yang2023learning}
Junjie Yang, Tianlong Chen, Mingkang Zhu, Fengxiang He, Dacheng Tao, Yingbin
  Liang, and Zhangyang Wang.
\newblock Learning to generalize provably in learning to optimize.
\newblock In \emph{International Conference on Artificial Intelligence and
  Statistics}, pp.\  9807--9825. PMLR, 2023.

\bibitem[Zhang(2004)]{zhang2004solving}
Tong Zhang.
\newblock Solving large scale linear prediction problems using stochastic
  gradient descent algorithms.
\newblock In \emph{International Conference on Machine learning}, pp.\  116.
  ACM, 2004.

\bibitem[Zhou et~al.(2018{\natexlab{a}})Zhou, Di, Du, Peng, Yang, Pan, Tsang,
  Liu, Qin, and Goh]{zhou2018sc2net}
Joey~Tianyi Zhou, Kai Di, Jiawei Du, Xi~Peng, Hao Yang, Sinno~Jialin Pan, Ivor
  Tsang, Yong Liu, Zheng Qin, and Rick Siow~Mong Goh.
\newblock Sc2net: Sparse lstms for sparse coding.
\newblock In \emph{Proceedings of the AAAI Conference on Artificial
  Intelligence}, volume~32, 2018{\natexlab{a}}.

\bibitem[Zhou et~al.(2018{\natexlab{b}})Zhou, Shang, and Cheng]{zhou2018simple}
Kaiwen Zhou, Fanhua Shang, and James Cheng.
\newblock A simple stochastic variance reduced algorithm with fast convergence
  rates.
\newblock In \emph{International Conference on Machine Learning}, pp.\
  5975--5984, 2018{\natexlab{b}}.

\bibitem[Zhou et~al.(2017)Zhou, Mertikopoulos, Bambos, Boyd, and
  Glynn]{zhou2017stochastic}
Zhengyuan Zhou, Panayotis Mertikopoulos, Nicholas Bambos, Stephen Boyd, and
  Peter~W Glynn.
\newblock Stochastic mirror descent in variationally coherent optimization
  problems.
\newblock \emph{Advances in Neural Information Processing Systems}, 30, 2017.

\bibitem[Zimmert \& Lattimore(2019)Zimmert and
  Lattimore]{zimmert2019connections}
Julian Zimmert and Tor Lattimore.
\newblock Connections between mirror descent, {T}hompson sampling and the
  information ratio.
\newblock \emph{Advances in Neural Information Processing Systems}, 32, 2019.

\bibitem[Zou et~al.(2019)Zou, Shen, Jie, Zhang, and Liu]{zou2019sufficient}
Fangyu Zou, Li~Shen, Zequn Jie, Weizhong Zhang, and Wei Liu.
\newblock A sufficient condition for convergences of {A}dam and {RMS}prop.
\newblock In \emph{Proceedings of the IEEE/CVF Conference on computer vision
  and pattern recognition}, pp.\  11127--11135, 2019.

\end{thebibliography}

\appendix
\section{Proofs in \Cref{sec:AMD}}\label{app:AMD}
\subsection{Proof of \Cref{lem:ConsecEnergyBd}} \label{app:lemConsecEnergyBd}
\begin{lemma}
    Consider the approximate AMD iterations from \Cref{alg:ApproxAMD}, and let $\hat{z}^{(k)}$ be the exact MD iterates given by \Eqref{eq:AMDExactMDIterate}. Assume $\psi^*$ is $L_{\psi^*}$-smooth with respect to a reference norm $\|\cdot \|_*$ on the dual space, i.e. $B_{\psi^*}(z,y) \le \frac{L_{\psi^*}}{2}\|z-y\|_*^2$, or equivalently that $\nabla \psi^*$ is $L_{\psi^*}$-Lipschitz. Assume further that there exists $0<\ell_R \le L_R$ such that for all $x, x' \in \mathcal{X}$, $\frac{\ell_R}{2} \|x-x'\|^2 \le R(x,x') \le \frac{L_R}{2} \|x-x'\|^2$. Define the energy $\tilde{E}^{(k)}$ for $k \ge 0$ as follows (where $t_{-1} = 0$):
    \begin{equation} 
        \tilde{E}^{(k)} := \frac{k^2 t_{k-1}}{r} \left(f(\tilde{x}^{(k)}) - f^*\right) + r B_{\psi^*} \left( \nabla \psi(\tilde{z}^{(k)}), \nabla \psi(x^*)\right).
    \end{equation}
    Assume the step-size conditions $\gamma \ge L_R L_{\psi^*}$, and $t_k \le \frac{l_R}{L_f \gamma}$. Then the difference between consecutive energies satisfies the following:
    \begin{align*}
        \tilde{E}^{(k+1)} - \tilde{E}^{(k)} & \le \frac{(2k+1-rk)t_k}{r} \left(f(\tilde{x}^{(k+1)}) - f^*\right) - \frac{k^2(t_{k-1} - t_k)}{r} \left(f (\tilde{x}^{(k)}) - f^*\right)  \\
        & \qquad + r \left\langle \nabla \psi(\tilde{z}^{(k+1)}) - \nabla \psi(\hat{z}^{(k+1)}), \tilde{z}^{(k+1)} - x^* \right\rangle.
    \end{align*}
\end{lemma}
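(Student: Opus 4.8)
The plan is to write $\tilde{E}^{(k)} = E_1^{(k)} + E_2^{(k)}$ with $E_1^{(k)} = \frac{k^2 t_{k-1}}{r}(f(\tilde{x}^{(k)}) - f^*)$ and $E_2^{(k)} = r B_{\psi^*}(\nabla\psi(\tilde{z}^{(k)}), \nabla\psi(x^*))$, and handle the two pieces separately. The difference $E_1^{(k+1)} - E_1^{(k)}$ is purely algebraic: expanding $(k+1)^2 = k^2 + 2k + 1$ and regrouping produces exactly the two target terms $\frac{(2k+1-rk)t_k}{r}(f(\tilde{x}^{(k+1)}) - f^*) - \frac{k^2(t_{k-1}-t_k)}{r}(f(\tilde{x}^{(k)}) - f^*)$ together with a residual $\frac{kt_k(k+r)}{r}(f(\tilde{x}^{(k+1)}) - f^*) - \frac{k^2 t_k}{r}(f(\tilde{x}^{(k)}) - f^*)$ that must be absorbed by the Bregman term.

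For the Bregman part I would insert the exact dual iterate $\nabla\psi(\hat{z}^{(k+1)}) = \nabla\psi(\tilde{z}^{(k)}) - \frac{kt_k}{r}\nabla f(x^{(k+1)})$ and split
\[E_2^{(k+1)} - E_2^{(k)} = \big[\,r B_{\psi^*}(\nabla\psi(\hat{z}^{(k+1)}), \nabla\psi(x^*)) - E_2^{(k)}\,\big] + r\big[\,B_{\psi^*}(\nabla\psi(\tilde{z}^{(k+1)}), \nabla\psi(x^*)) - B_{\psi^*}(\nabla\psi(\hat{z}^{(k+1)}), \nabla\psi(x^*))\,\big].\]
The second bracket carries the entire forward–backward inconsistency. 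Expanding both divergences, the $\psi^*(\nabla\psi(x^*))$ terms and the terms linear in $\nabla\psi(x^*)$ cancel (using $\nabla\psi^*(\nabla\psi(x^*)) = x^*$), leaving $\psi^*(\nabla\psi(\tilde{z}^{(k+1)})) - \psi^*(\nabla\psi(\hat{z}^{(k+1)})) - \langle x^*, \nabla\psi(\tilde{z}^{(k+1)}) - \nabla\psi(\hat{z}^{(k+1)})\rangle$. Convexity of $\psi^*$ bounds the potential difference by $\langle \nabla\psi^*(\nabla\psi(\tilde{z}^{(k+1)})), \nabla\psi(\tilde{z}^{(k+1)}) - \nabla\psi(\hat{z}^{(k+1)})\rangle$, and because $\psi^*$ is the genuine conjugate of $\psi$ we have $\nabla\psi^*(\nabla\psi(\tilde{z}^{(k+1)})) = \tilde{z}^{(k+1)}$; this produces precisely $r\langle \nabla\psi(\tilde{z}^{(k+1)}) - \nabla\psi(\hat{z}^{(k+1)}), \tilde{z}^{(k+1)} - x^*\rangle$. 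This is the one genuinely new step relative to the exact analysis of \citet{accelMD15Walid}.

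The first bracket is the exact AMD estimate. Here I would apply the three-point identity for $B_{\psi^*}$ to rewrite it as $r B_{\psi^*}(\nabla\psi(\hat{z}^{(k+1)}), \nabla\psi(\tilde{z}^{(k)})) - kt_k\langle \tilde{z}^{(k)} - x^*, \nabla f(x^{(k+1)})\rangle$, bound the first summand by $\frac{L_{\psi^*}k^2 t_k^2}{2r}\|\nabla f(x^{(k+1)})\|_*^2$ via $L_{\psi^*}$-smoothness of $\psi^*$, and substitute the coupling $\tilde{z}^{(k)} - x^* = \frac{r+k}{r}(x^{(k+1)} - x^*) - \frac{k}{r}(\tilde{x}^{(k)} - x^*)$ obtained by inverting Step 2 of \Cref{alg:ApproxAMD}. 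Applying convexity of $f$ at $x^{(k+1)}$ against $x^*$ and against $\tilde{x}^{(k)}$ turns the inner products into function-value gaps; after collecting terms the $f(\tilde{x}^{(k)})$ contributions cancel against the algebraic residual from $E_1$, and what remains to be shown nonpositive is $\frac{kt_k(k+r)}{r}\big(f(\tilde{x}^{(k+1)}) - f(x^{(k+1)})\big) + \frac{L_{\psi^*}k^2 t_k^2}{2r}\|\nabla f(x^{(k+1)})\|_*^2 \le 0$.

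This final inequality is supplied by a descent estimate for the correction step (Step 4). Using that $\tilde{x}^{(k+1)}$ minimizes $\gamma t_k\langle \nabla f(x^{(k+1)}), \cdot\rangle + R(\cdot, x^{(k+1)})$, comparing against the trial point $x^{(k+1)} - s\nabla f(x^{(k+1)})$, and combining $L_f$-smoothness of $f$ with $\frac{L_f}{2}\|\cdot\|^2 \le \frac{L_f}{\ell_R}R$, the condition $t_k \le \frac{\ell_R}{L_f\gamma}$ lets me replace the smoothness penalty by $\frac{1}{\gamma t_k}R$ and then optimize over $s$, yielding $f(\tilde{x}^{(k+1)}) - f(x^{(k+1)}) \le -\frac{\gamma t_k}{2L_R}\|\nabla f(x^{(k+1)})\|_*^2$. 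Substituting, the residual is nonpositive exactly when $\frac{\gamma}{L_R} \ge \frac{L_{\psi^*}k}{k+r}$, which holds since $\gamma \ge L_R L_{\psi^*}$ and $\frac{k}{k+r} < 1$. I expect this last step to be the main obstacle: confirming that the gradient-norm term generated by the dual smoothness is cancelled precisely by the correction step, which is where the two step-size conditions $\gamma \ge L_R L_{\psi^*}$ and $t_k \le \frac{\ell_R}{L_f\gamma}$ are used in tandem. The rest is routine coefficient bookkeeping.
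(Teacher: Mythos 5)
Your proposal is correct, and it reaches the stated bound by a genuinely different route than the paper's. The paper's proof (following Krichene et al.) manipulates the approximate iterates throughout: it applies the three-point identity to the triple $\bigl(\nabla \psi(\tilde{z}^{(k+1)}), \nabla \psi(x^*), \nabla \psi(\tilde{z}^{(k)})\bigr)$, extracts the inconsistency term by inserting $\nabla \psi(\hat{z}^{(k+1)})$ inside the resulting inner product, retains the negative quadratic terms $\|\tilde{z}^{(k+1)} - \tilde{z}^{(k)}\|^2$ and $\|\tilde{x}^{(k+1)} - x^{(k+1)}\|^2$, lower-bounds the former via the Step-4 variational inequality tested at $d^{(k+1)} = \lambda_k \tilde{z}^{(k+1)} + (1-\lambda_k)\tilde{x}^{(k)}$, and finally checks that the assembled coefficients $\alpha_k, \beta_k$ of these quadratics are nonnegative under the two step-size conditions. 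You instead split the Bregman part into an exact-step bracket plus an inconsistency bracket. Your bound on the inconsistency bracket via convexity of $\psi^*$ (which delivers exactly $r\langle \nabla \psi(\tilde{z}^{(k+1)}) - \nabla \psi(\hat{z}^{(k+1)}), \tilde{z}^{(k+1)} - x^*\rangle$) is cleaner and more modular than the paper's extraction, and your cancellation mechanism is different: the exact step's cost becomes a gradient-norm term $\frac{L_{\psi^*} k^2 t_k^2}{2r}\|\nabla f(x^{(k+1)})\|_*^2$ via dual smoothness, which is then absorbed by the descent estimate $f(\tilde{x}^{(k+1)}) - f(x^{(k+1)}) \le -\frac{\gamma t_k}{2L_R}\|\nabla f(x^{(k+1)})\|_*^2$ coming from the correction step, rather than by sign checks on $\alpha_k, \beta_k$. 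I verified your bookkeeping: the residual from the $E_1$ split and the exact-step bound do combine to exactly $\frac{k t_k(k+r)}{r}\bigl(f(\tilde{x}^{(k+1)}) - f(x^{(k+1)})\bigr) + \frac{L_{\psi^*} k^2 t_k^2}{2r}\|\nabla f(x^{(k+1)})\|_*^2$, and nonpositivity is equivalent to $\frac{\gamma}{L_R} \ge \frac{L_{\psi^*} k}{k+r}$, implied by $\gamma \ge L_R L_{\psi^*}$. Your route also clarifies the roles of the two conditions ($t_k \le \ell_R/(L_f \gamma)$ makes Step 4 a genuine descent step; $\gamma \ge L_R L_{\psi^*}$ makes that descent dominate the dual-smoothness penalty) and needs only plain convexity of $f$ in the coupling step, where the paper invokes its smoothness-based Lemma 4 twice.

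One small repair is needed in your descent estimate. For a general (non-Euclidean) norm, the trial point $x^{(k+1)} - s\nabla f(x^{(k+1)})$ does not produce $-\frac{\gamma t_k}{2L_R}\|\nabla f(x^{(k+1)})\|_*^2$: one gets $\langle \nabla f, -s \nabla f\rangle = -s\langle \nabla f, \nabla f\rangle$, which cannot be related to $-s\|\nabla f\|_* \|\nabla f\|$ in the needed direction (generalized Cauchy--Schwarz goes the wrong way). Instead, minimize over all trial points $u$: by optimality of $\tilde{x}^{(k+1)}$ and $R(u, x^{(k+1)}) \le \frac{L_R}{2}\|u - x^{(k+1)}\|^2$, the relevant bracket is at most $\min_u \bigl\{\langle \nabla f(x^{(k+1)}), u - x^{(k+1)}\rangle + \frac{L_R}{2\gamma t_k}\|u - x^{(k+1)}\|^2\bigr\} = -\frac{\gamma t_k}{2L_R}\|\nabla f(x^{(k+1)})\|_*^2$, using that the Fenchel conjugate of $\frac{c}{2}\|\cdot\|^2$ is $\frac{1}{2c}\|\cdot\|_*^2$. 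In the Euclidean case $R(x,x') = \frac{1}{2}\|x - x'\|_2^2$ your ray argument is exact, so nothing else in the proof changes.
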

\begin{proof}
    The proof closely follows that of Lemma 2 in \citet{accelMD15Walid}. We will use the following lemmas, which will be stated without proof. The proofs of these results can be found in the supplementary material of \cite{accelMD15Walid}.
    \begin{lemma}\label{lem:AMDLem4}
        Let $f$ be convex with $L_f$-Lipschitz gradient with respect to $\| \cdot \|$. Then for all $x, x', x^+$,
        \[f(x^+) \le f(x') + \langle \nabla f(x), x^+ - x' \rangle + \frac{L_f}{2}\|x^+ - x\|^2.\]
    \end{lemma}
    \begin{lemma}\label{lem:AMDLem5}
        For any differentiable convex $\psi^*$ and $u, v, w \in \dom \psi^*$,
        \[B_{\psi^*}(u,v) - B_{\psi^*}(w,v) = -B_{\psi^*}(w,u) + \langle \nabla \psi^* (u) - \nabla \psi^*(v), u-w \rangle.\]
    \end{lemma}
    \begin{lemma}\label{lem:AMDLem6}
        If $\psi^*$ has $L_{\psi^*}$-Lipschitz gradient, then for all $u,v \in (\R^n)^*$, 
        \[ \frac{1}{2L_{\psi^*}} \|\nabla \psi^*(u) - \nabla \psi^*(v)\|^2 \le B_{\psi^*}(u,v) \le \frac{L_{\psi^*}}{2} \|u-v\|_*^2.\]
    \end{lemma}
    We begin our analysis with these lemmas in place. Let $x^* = z^*$ be some minimizer. We begin by bounding the difference in Bregman divergence. 
    \begin{equation} \label{eq:AMDEq8}
        \begin{alignedat}{2}
        & B_{\psi^*}\left(\nabla \psi (\tilde{z}^{(k+1)}), \nabla \psi(x^*)\right) - B_{\psi^*}\left(\nabla \psi (\tilde{z}^{(k)}), \nabla \psi(x^*)\right) \\
    & \stackrel{\text{(a)}}{=} -B_{\psi^*}\left(\nabla \psi (\tilde{z}^{(k)}), \nabla \psi (\tilde{z}^{(k+1)})\right)  + \left\langle \nabla \psi(\tilde{z}^{(k+1)}) - \nabla \psi(\tilde{z}^{(k)}), \tilde{z}^{(k+1)} - x^* \right\rangle  \\
    &\stackrel{\text{(b)}}{\le} -\frac{1}{2L_{\psi^*}} \|\tilde{z}^{(k+1)}-\tilde{z}^{(k)}\|^2 + \left\langle \nabla \psi(\tilde{z}^{(k+1)}) - \nabla \psi(\tilde{z}^{(k)}), \tilde{z}^{(k+1)} - x^* \right\rangle\\
    &\stackrel{\text{(c)}}{\le} -\frac{1}{2L_{\psi^*}} \|\tilde{z}^{(k+1)}-\tilde{z}^{(k)}\|^2 + \left\langle -\frac{kt_k}{r} \nabla f(x^{(k+1)}), \tilde{z}^{(k+1)} - x^*\right\rangle  + \left\langle \nabla \psi(\tilde{z}^{(k+1)}) - \nabla \psi (\hat{z}^{(k+1)}), \tilde{z}^{(k+1)} - x^*\right\rangle.  
        \end{alignedat}
    \end{equation}
In Step (a) above, we used $\nabla \psi^* \circ \nabla \psi = I$ and $z^* = x^*$ together with \Cref{lem:AMDLem5}. In Step (b), we use \Cref{lem:AMDLem6} to bound the first term. In Step (c), we decomposed the right-hand side of the inner product using 
    \begin{align*}
        & \nabla \psi(\tilde{z}^{(k+1)}) - \textcolor{blue}{\nabla \psi(\hat{z}^{(k+1)}) + \nabla \psi(\hat{z}^{(k+1)})} - \nabla \psi(\tilde{z}^{(k)}) = \nabla \psi(\tilde{z}^{(k+1)}) - \nabla \psi(\hat{z}^{(k+1)}) - \frac{kt_k}{r}\nabla f(x^{(k+1)}),
    \end{align*}
    which follows directly from \Eqref{eq:AMDExactMDIterate}. Recall Step 4 of \Cref{alg:ApproxAMD}:
    \[\tilde{x}^{(k+1)} = \argmin_{\tilde{x} \in \R^n} \left\langle \nabla f(x^{(k+1)}) , \tilde{x}\right\rangle + \frac{1}{\gamma t_k} R(\tilde{x}, x^{(k+1)}),\]
    where $\frac{l_R}{2} \|x-y\|^2 \le R(x,y) \le \frac{L_R}{2}\|x-y\|^2$. From the definition of $\tilde{x}^{(k+1)}$, we have that for any $x \in \R^n$,
    \begin{equation}\label{eq:LemmaRRelation}
    R(x, x^{(k+1)}) \ge R(\tilde{x}^{(k+1)}, x^{(k+1)}) + \gamma t_k \langle \nabla f(x^{(k+1)}), \tilde{x}^{(k+1)}-x \rangle.
    \end{equation}
    Recalling the step for $x^{(k+1)}$ in Step 2 of \Cref{alg:ApproxAMD}, we can write
    \[\tilde{z}^{(k+1)} - \tilde{z}^{(k)} = \frac{1}{\lambda_k}\left(\lambda_k \tilde{z}^{(k+1)} + (1-\lambda_k) \tilde{x}^{(k)} - x^{(k+1)}\right) = \frac{1}{\lambda_k} \left(d^{(k+1)} - x^{(k+1)}\right),\]
    where we define $d^{(k+1)} \coloneqq \lambda_k \tilde{z}^{(k+1)} + (1-\lambda_k) \tilde{x}^{(k)}$. We then compute
    \begin{align*}
    &\|\tilde{z}^{(k+1)} - \tilde{z}^{(k)}\|^2 = \frac{1}{\lambda_k^2} \|d^{(k+1)}-x^{(k+1)}\|^2\ge \frac{2}{L_R\lambda_k^2} R(d^{(k+1)},x^{(k+1)}), \text{\,\,since $R(x,x')\leq \frac{L_R}{2}\|x-x'\|^2$.} \\
    \implies &\|\tilde{z}^{(k+1)} - \tilde{z}^{(k)}\|^2\ge \frac{2}{L_R\lambda_k^2} \left(R(\tilde{x}^{(k+1)}, x^{(k+1)}) + \gamma t_k \left\langle\nabla f(x^{(k+1)}), \tilde{x}^{(k+1)} - d^{(k+1)} \right\rangle\right),\\ \intertext{\text{by setting $x=d^{(k+1)}$ in \Eqref{eq:LemmaRRelation}. Now, using $R(\tilde{x}^{(k+1)}, x^{(k+1)})\geq \frac{l_R}{2}\|\tilde{x}^{(k+1)}-x^{(k+1)}\|^2$, we have:}}%
    & \|\tilde{z}^{(k+1)} - \tilde{z}^{(k)}\|^2 \ge \frac{2}{L_R\lambda_k^2}\left(\frac{l_R}{2} \|\tilde{x}^{(k+1)} - x^{(k+1)}\|^2 +\gamma t_k \left\langle \nabla f(x^{(k+1)}), \tilde{x}^{(k+1)} - d^{(k+1)}\right\rangle\right).
    \end{align*}
    By replacing $d^{(k+1)}=\lambda_k \tilde{z}^{(k+1)} + (1-\lambda_k) \tilde{x}^{(k)}$ and multiplying both sides by $\lambda_k \frac{kL_R}{2r\gamma}$, we get
    \begin{multline} \label{eq:AMDEq9}
        \lambda_k \frac{kL_R}{2r\gamma}\|\tilde{z}^{(k+1)} - \tilde{z}^{(k)}\|^2 \ge \frac{kl_R}{2r\lambda_k \gamma}\|\tilde{x}^{(k+1)} -x^{(k+1)}\|^2 \\
        + \left\langle \frac{kt_k}{r} \nabla f(x^{(k+1)}), \frac{1}{\lambda_k} \tilde{x}^{(k+1)} - \tilde{z}^{(k+1)} - \frac{1-\lambda_k}{\lambda_k} \tilde{x}^{(k)}\right\rangle.
    \end{multline}

    Subtracting \Eqref{eq:AMDEq9} from \Eqref{eq:AMDEq8}:
    \begin{align*}
        & B_{\psi^*}(\nabla \psi (\tilde{z}^{(k+1)}), \nabla \psi(x^*)) - B_{\psi^*}(\nabla \psi (\tilde{z}^{(k)}), \nabla \psi(x^*)) \le -\alpha_k \|\tilde{z}^{(k+1)} - \tilde{z}^{(k)}\|^2 - \frac{kl_R}{2r\lambda_k \gamma} \|\tilde{x}^{(k+1)} - x^{(k+1)}\|^2 \\
        & \qquad + \left\langle \frac{-kt_k}{r} \nabla f(x^{(k+1)}), -x^* + \frac{1}{\lambda_k} \tilde{x}^{(k+1)} - \frac{1-\lambda_k}{\lambda_k} \tilde{x}^{(k)}\right\rangle  + \left\langle \nabla \psi(\tilde{z}^{(k+1)}) - \nabla \psi(\hat{z}^{(k+1)}), \tilde{z}^{(k+1)} - x^*\right\rangle,
    \end{align*}
    where $\alpha_k = \frac{1}{2L_{\psi^*}} - \frac{k\lambda_k L_R}{2r\gamma}$. Defining $D_1^{(k+1)} = \|\tilde{x}^{(k+1)} - x^{(k+1)}\|^2$ and $D_2^{(k+1)} = \|\tilde{z}^{(k+1)} - \tilde{z}^{(k)}\|^2$, we rewrite the last inequality as 
    \begin{align*}
         &\quad B_{\psi^*}(\nabla \psi (\tilde{z}^{(k+1)}), \nabla \psi(x^*)) - B_{\psi^*}(\nabla \psi (\tilde{z}^{(k)}), \nabla \psi(x^*)) \\
         & \le -\alpha_k D_2^{(k+1)} - \frac{k l_R}{2r\lambda_k \gamma} D_1^{(k+1)} + \frac{kt_k}{r} \left\langle-\nabla f(x^{(k+1)}), \tilde{x}^{(k+1)} - x^* \right\rangle \\
         & \qquad + \frac{1-\lambda_k}{\lambda_k} \frac{kt_k}{r} \left\langle -\nabla f(x^{(k+1)}), \tilde{x}^{(k+1)} - \tilde{x}^{(k)}\right\rangle  + \left\langle \nabla \psi(\tilde{z}^{(k+1)}) - \nabla \psi(\hat{z}^{(k+1)}), \tilde{z}^{(k+1)} - x^*\right\rangle.
    \end{align*}

    Using \Cref{lem:AMDLem4} with $x, x^+, x'$ as follows, we bound the inner products:
    \begin{gather*}
        \left\langle -\nabla f(x^{(k+1)}), \tilde{x}^{(k+1)} - \tilde{x}^{(k)} \right\rangle \le f(\tilde{x}^{(k)}) - f(\tilde{x}^{(k+1)}) + \frac{L_f}{2} D_1^{(k+1)}, \text{\,($\textcolor{blue}{x}=x^{(k+1)}$, $\textcolor{blue}{x^{+}}=\tilde{x}^{(k+1)}$, and $\textcolor{blue}{x'}=\tilde{x}^{(k)}$)}; \\
        \left\langle -\nabla f(x^{(k+1)}), \tilde{x}^{(k+1)} - x^* \right\rangle \le f^* - f(\tilde{x}^{(k+1)}) + \frac{L_f}{2} D_1^{(k+1)}, \text{\,($\textcolor{blue}{x}=x^{(k+1)}$, $\textcolor{blue}{x^{+}}=\tilde{x}^{(k+1)}$, and $\textcolor{blue}{x'}=x^*$).} \\
    \end{gather*}
    Combining the inequalities and using $\frac{1-\lambda_k}{\lambda_k} = \frac{k}{r}$, we obtain
    \begin{align*}
        & B_{\psi^*}(\nabla \psi (\tilde{z}^{(k+1)}), \nabla \psi(x^*)) - B_{\psi^*}(\nabla \psi (\tilde{z}^{(k)}), \nabla \psi(x^*)) \\
        &  \le -\alpha_k D_2^{(k+1)}+ \frac{k^2 t_k}{r^2} \left(f(\tilde{x}^{(k)}) - f(\tilde{x}^{(k+1)}) + \frac{L_f}{2} D_1^{(k+1)}\right) + \frac{kt_k}{r} \left(f^* - f(\tilde{x}^{(k+1)}) + \frac{L_f}{2} D_1^{(k+1)}\right) \\ 
        & \qquad - \frac{kl_R}{2r\lambda_k \gamma} D_1^{(k+1)} + \left\langle \nabla \psi(\tilde{z}^{(k+1)}) - \nabla \psi(\hat{z}^{(k+1)}), \tilde{z}^{(k+1)} - x^*\right\rangle \\
        &  = \frac{k^2 t_k}{r^2} \left(f(\tilde{x}^{(k)}) - f(\tilde{x}^{(k+1)})\right) + \frac{kt_k}{r}\left(f^* - f(\tilde{x}^{(k+1)})\right) - \alpha_k D_2^{(k+1)} - \beta_k D_1^{(k+1)}  \\
        & \qquad + \left\langle \nabla \psi(\tilde{z}^{(k+1)}) - \nabla \psi(\hat{z}^{(k+1)}), \tilde{z}^{(k+1)} - x^*\right\rangle,
    \end{align*}
    where $\displaystyle\beta_k := \frac{kl_R}{2r\lambda_k \gamma} - \frac{L_f k^2 t_k}{2r^2} - \frac{L_f kt_k}{2r}$.
    %
    Re-introducing the energy function for $k \ge 1$,
    \begin{equation*}
        \tilde{E}^{(k)} := \frac{k^2 t_{k-1}}{r} (f(\tilde{x}^{(k)}) - f^*) + r B_{\psi^*}(\nabla \psi(z^{(k)}), \nabla \psi(x^*)),
    \end{equation*}
    we can compute the difference $\tilde{E}^{(k+1)} - \tilde{E}^{(k)}$:
    \begin{align*}
        \tilde{E}^{(k+1)} - \tilde{E}^{(k)}  &= \frac{(k+1)^2 t_k}{r} \left(f(\tilde{x}^{(k+1)}) - f^*\right) - \frac{k^2 t_{k-1}}{r} \left(f (\tilde{x}^{(k)}) - f^*\right) \\
        & \qquad + r(B_{\psi^*}(\nabla \psi (\tilde{z}^{(k+1)}), \nabla \psi(x^*)) - B_{\psi^*}(\nabla \psi (\tilde{z}^{(k)}), \nabla \psi(x^*))) \\
        & = \frac{k^2 t_k}{r} \left(f(\tilde{x}^{(k+1)}) - f^*\right) + \frac{(2k+1)t_k}{r}\left(f(\tilde{x}^{(k+1)}) - f^*\right) \\
        & \qquad - \frac{k^2 t_k}{r}\left(f (\tilde{x}^{(k)}) - f^*\right) - \frac{k^2 t_{k-1} - k^2 t_k}{r}\left(f (\tilde{x}^{(k)}) - f^*\right) \\
        & \qquad + \frac{k^2 t_k}{r} \left(f(\tilde{x}^{(k)}) - f(\tilde{x}^{(k+1)})\right) + kt_k \left(f^* - f(\tilde{x}^{(k+1)})\right) - r\alpha_k D_2^{(k+1)} - r\beta_k D_1^{(k+1)} \\
        & \qquad + r \left\langle \nabla \psi(\tilde{z}^{(k+1)}) - \nabla \psi(\hat{z}^{(k+1)}), \tilde{z}^{(k+1)} - x^*\right\rangle \\
        & = \frac{(2k+1-rk)t_k}{r} \left(f(\tilde{x}^{(k+1)}) - f^*\right) - \frac{k^2(t_{k-1} - t_k)}{r} \left(f (\tilde{x}^{(k)}) - f^*\right)  - r\alpha_k D_2^{(k+1)} - r\beta_k D_1^{(k+1)} \\
        & \qquad + r \left\langle \nabla \psi(\tilde{z}^{(k+1)}) - \nabla \psi(\hat{z}^{(k+1)}), \tilde{z}^{(k+1)} - x^*\right\rangle.
    \end{align*}
    For the desired inequality to hold, it suffices that $\alpha_k, \beta_k\ge0$. Recalling their definitions (and $\lambda_k$), we want
    \begin{gather*}
        \frac{1}{2L_{\psi^*}} - \frac{kL_R}{2(r+k)\gamma}\ge 0\text{\,\,\,and\,\,\,}
        \frac{k(r+k)l_R}{2r^2\gamma} - \frac{L_f k^2t_k}{2r^2} - \frac{L_fkt_k}{2r} \ge 0,
    \end{gather*}
    which are equivalent to
    \begin{gather*}
        \gamma \ge \frac{kr}{kr+r^2} L_R L_{\psi^*} \text{\,\,\,and\,\,\,}
        t_k \le \frac{l_R}{L_f \gamma}.
    \end{gather*}
    It thus suffices to have
    \begin{equation*}
        \gamma \ge L_R L_{\psi^*},\quad t_k \le \frac{l_R}{L_f \gamma}.
    \end{equation*}
    Under these conditions, we get the bound as required
    \begin{align*}
        \tilde{E}^{(k+1)} - \tilde{E}^{(k)}&  \le \frac{(2k+1-rk)t_k}{r} \left(f(\tilde{x}^{(k+1)}) - f^*\right) - \frac{k^2(t_{k-1} - t_k)}{r} \left(f (\tilde{x}^{(k)}) - f^*\right)  \\
        & \qquad + r \left\langle \nabla \psi(\tilde{z}^{(k+1)}) - \nabla \psi(\hat{z}^{(k+1)}), \tilde{z}^{(k+1)} - x^* \right\rangle.
    \end{align*}
\end{proof}

\subsection{Proof of \Cref{prop:AMDInitEnergyBd}}\label{app:AMDInitEnergyBd}
\begin{proposition}
    Assume the conditions as in \Cref{lem:ConsecEnergyBd}. Then
    \begin{equation}
        \tilde{E}^{(1)} \le r B_{\psi^*}(\nabla \psi(x^{(0)}), \nabla \psi(x^*)) + \frac{t_0}{r} (f(x^{(0)}) - f^*) + r \left\langle \nabla \psi(\tilde{z}^{(1)}) - \nabla \psi(\hat{z}^{(1)}), \tilde{z}^{(1)} - x^*\right\rangle
    \end{equation}
\end{proposition}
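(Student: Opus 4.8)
The plan is to unwind the definition of the energy at $k=1$ and reduce the claim to two facts. By \Cref{eq:energy_AMD} and the initialization $\tilde{z}^{(0)} = x^{(0)}$, we have $\tilde{E}^{(1)} = \tfrac{t_0}{r}\bigl(f(\tilde{x}^{(1)}) - f^*\bigr) + r B_{\psi^*}\bigl(\nabla\psi(\tilde{z}^{(1)}), \nabla\psi(x^*)\bigr)$, whereas the target right-hand side is the same expression with $f(\tilde{x}^{(1)})$ replaced by $f(x^{(0)})$ and $B_{\psi^*}(\nabla\psi(\tilde{z}^{(1)}), \nabla\psi(x^*))$ replaced by $B_{\psi^*}(\nabla\psi(x^{(0)}), \nabla\psi(x^*))$, plus the approximation-error inner product. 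So it suffices to establish (i) the descent inequality $f(\tilde{x}^{(1)}) \le f(x^{(0)})$, and (ii) a bound on the Bregman increment $B_{\psi^*}(\nabla\psi(\tilde{z}^{(1)}), \nabla\psi(x^*)) - B_{\psi^*}(\nabla\psi(x^{(0)}), \nabla\psi(x^*))$ by the error term $\langle \nabla\psi(\tilde{z}^{(1)}) - \nabla\psi(\hat{z}^{(1)}), \tilde{z}^{(1)} - x^*\rangle$.

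For (i), I would exploit that the first step collapses the coupling. Since $\lambda_0 = r/(r+0) = 1$, Step 2 of \Cref{alg:ApproxAMD} gives $x^{(1)} = \tilde{z}^{(0)} = x^{(0)}$, so Step 4 makes $\tilde{x}^{(1)}$ an ordinary prox-type descent step from $x^{(0)}$. Taking $x = x^{(1)}$ in the optimality relation \Cref{eq:LemmaRRelation} and using $R(x^{(1)}, x^{(1)}) = 0$ (forced by $\tfrac{\ell_R}{2}\|\cdot\|^2 \le R \le \tfrac{L_R}{2}\|\cdot\|^2$) yields $\gamma t_0 \langle \nabla f(x^{(1)}), \tilde{x}^{(1)} - x^{(1)}\rangle \le -\tfrac{\ell_R}{2}\|\tilde{x}^{(1)} - x^{(1)}\|^2$. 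Feeding this into the descent lemma \Cref{lem:AMDLem4} (with $x = x' = x^{(1)}$ and $x^+ = \tilde{x}^{(1)}$) gives $f(\tilde{x}^{(1)}) \le f(x^{(1)}) + \tfrac12\bigl(L_f - \tfrac{\ell_R}{\gamma t_0}\bigr)\|\tilde{x}^{(1)} - x^{(1)}\|^2$, and the step-size condition $t_0 \le \ell_R/(L_f \gamma)$ makes the coefficient non-positive, so $f(\tilde{x}^{(1)}) \le f(x^{(1)}) = f(x^{(0)})$. This is precisely the first-iterate bound carried out in \citet{accelMD15Walid}.

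For (ii), I would specialize the Bregman estimate \Cref{eq:AMDEq8} (derived inside the proof of \Cref{lem:ConsecEnergyBd}) to $k=0$. There the middle inner-product term carries a factor $\tfrac{k t_k}{r}$ and therefore vanishes at $k=0$, leaving $B_{\psi^*}(\nabla\psi(\tilde{z}^{(1)}), \nabla\psi(x^*)) - B_{\psi^*}(\nabla\psi(x^{(0)}), \nabla\psi(x^*)) \le -\tfrac{1}{2L_{\psi^*}}\|\tilde{z}^{(1)} - \tilde{z}^{(0)}\|^2 + \langle \nabla\psi(\tilde{z}^{(1)}) - \nabla\psi(\hat{z}^{(1)}), \tilde{z}^{(1)} - x^*\rangle$. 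Dropping the non-positive squared-norm term and multiplying by $r$ gives the required estimate. Combining (i) and (ii) with the decomposition of $\tilde{E}^{(1)}$ above yields the proposition.

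The main obstacle is really just (i): one must notice that $\lambda_0 = 1$ degenerates the accelerated update into a plain descent step at the first iteration, so that the correction step produces a genuine decrease $f(\tilde{x}^{(1)}) \le f(x^{(0)})$ under the prescribed step-size. Once this is in hand, the rest is a routine $k=0$ specialization of the machinery already built for \Cref{lem:ConsecEnergyBd}, with the helpful $-\tfrac{1}{2L_{\psi^*}}\|\tilde{z}^{(1)} - \tilde{z}^{(0)}\|^2$ term simply discarded.
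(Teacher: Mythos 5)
Your proof is correct, and its skeleton matches the paper's: both reduce the claim to (a) the $k=0$ instance of the energy-difference machinery and (b) the bound $f(\tilde{x}^{(1)}) \le f(x^{(1)}) = f(x^{(0)})$, exploiting $\lambda_0 = 1$. The difference lies in how each part is executed. For (a), the paper simply invokes \Cref{lem:ConsecEnergyBd} with $k=0$ (whose first term then reads $\tfrac{t_0}{r}(f(\tilde{x}^{(1)})-f^*)$ and whose second term vanishes), together with $\tilde{E}^{(0)} = rB_{\psi^*}(\nabla\psi(x^{(0)}),\nabla\psi(x^*))$; you instead re-derive the same content from the intermediate inequality \Cref{eq:AMDEq8} specialized to $k=0$ and drop the non-positive squared-norm term, which is equivalent in substance, just slightly more work than citing the lemma. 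The genuine divergence is in (b). The paper, following \citet{accelMD15Walid}, bounds $f(\tilde{x}^{(1)}) - f^*$ by a chain that passes through the minimizer $x^*$, applying \Cref{lem:AMDLem4} twice and the optimality relation \Cref{eq:AMDEq10}; along the way it invokes $\tfrac{2L_f}{\ell_R}\le\tfrac{1}{\gamma t_0}$, a factor-2 stronger requirement than the assumed $t_k \le \ell_R/(L_f\gamma)$, and its final cancellation compares $\tfrac{L_f}{2}\|x^{(1)}-x^*\|^2$ against $\tfrac{L_f}{\ell_R}R(\tilde{x}^{(1)},x^{(1)})$, two quantities that are not obviously comparable. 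Your argument for (b) is a direct sufficient-decrease bound: optimality of $\tilde{x}^{(1)}$ tested at $x=x^{(1)}$ gives $\gamma t_0\langle\nabla f(x^{(1)}),\tilde{x}^{(1)}-x^{(1)}\rangle \le -R(\tilde{x}^{(1)},x^{(1)}) \le -\tfrac{\ell_R}{2}\|\tilde{x}^{(1)}-x^{(1)}\|^2$, and the descent lemma then yields $f(\tilde{x}^{(1)}) \le f(x^{(1)}) + \tfrac12\bigl(L_f - \tfrac{\ell_R}{\gamma t_0}\bigr)\|\tilde{x}^{(1)}-x^{(1)}\|^2 \le f(x^{(1)})$ under exactly the assumed step-size condition. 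This is more elementary and tighter than the paper's route: it never references $x^*$, uses the descent lemma only once, and sidesteps both the factor-of-2 discrepancy and the questionable final cancellation in the paper's chain.
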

\begin{proof}
    From \Cref{lem:ConsecEnergyBd} applied with $k=0$, we have
    \begin{align*}
        \tilde{E}^{(1)} & \le \tilde{E}^{(0)} + \frac{t_0}{r}\left(f(\tilde{x}^{(1)}) - f^*\right) + r \left\langle \nabla \psi(\tilde{z}^{(1)}) - \nabla \psi(\hat{z}^{(1)}), \tilde{z}^{(1)} - x^*\right\rangle \\
        & = r B_{\psi^*}\left(\nabla \psi(z^{(0)}), \nabla \psi(x^*)\right) + \frac{t_0}{r}\left(f(\tilde{x}^{(1)}) - f^*\right)+ r \left\langle \nabla \psi(\tilde{z}^{(1)}) - \nabla \psi(\hat{z}^{(1)}), \tilde{z}^{(1)} - x^*\right\rangle.
    \end{align*}
    By definition, $\tilde{x}^{(1)} = \argmin_{\tilde{x} \in \R^n} \gamma t_0 \langle \nabla f(x^{(1)}), \tilde{x} \rangle + R(\tilde{x}, x^{(1)})$, thus (since $R(x, x) = 0$)
    \begin{equation}\label{eq:AMDEq10}
        \gamma t_0 \left\langle \nabla f(x^{(1)}), \tilde{x}^{(1)}\right\rangle + R(\tilde{x}^{(1)}, x^{(1)}) \le \gamma t_0 \left\langle \nabla f(x^{(1)}), x^{(1)} \right\rangle.
    \end{equation}
    Therefore, we get
    \begin{align*}
        f(\tilde{x}^{(1)}) - f^* & \le \left\langle \nabla f(x^{(1)}), \tilde{x}^{(1)}-x^*\right\rangle + \frac{L_f}{2} \|\tilde{x}^{(1)} - x^{(1)}\|^2 & \text{ by \Cref{lem:AMDLem4}} \\
        & \le \left\langle \nabla f(x^{(1)}), \tilde{x}^{(1)}-x^*\right\rangle + \frac{L_f}{l_R} R(\tilde{x}^{(1)}, x^{(1)}) & \text{ by assumption on } R\\
        & \le \left\langle \nabla f(x^{(1)}), \tilde{x}^{(1)}-x^*\right\rangle + \frac{1}{\gamma t_0} R(\tilde{x}^{(1)}, x^{(1)}) - \frac{L_f}{l_R} R(\tilde{x}^{(1)}, x^{(1)}) & \text{ using } \frac{2L_f}{l_R} \le \frac{1}{\gamma t_0} \\
        & \le \left\langle \nabla f(x^{(1)}), x^{(1)}-x^*\right\rangle - \frac{L_f}{l_R} R(\tilde{x}^{(1)}, x^{(1)}) & \text{ using \Eqref{eq:AMDEq10}}\\
        & \le f(x^{(1)}) - f^* + \frac{L_f}{2}\|x^{(1)}-x^*\|^2 - \frac{L_f}{l_R} R(\tilde{x}^{(1)}, x^{(1)}) & \text{ using \Cref{lem:AMDLem4}} \\
        & \le f(x^{(1)}) - f^* & \text{ by assumption on }R.
    \end{align*}

    Now recalling that $x^{(1)} = \lambda_0 \tilde{z}^{(0)} + (1-\lambda_0) \tilde{x}^{(0)}$ and $\lambda_0 = 1$, we have $x^{(1)} = \tilde{x}^{(0)} = x^{(0)}$. Therefore, we have $f(\tilde{x}^{(1)}) - f^* \le f(x^{(0)}) - f^*$. Further using $\tilde{z}^{(0)} = x^{(0)}$ shows the desired inequality.
\end{proof}

\section{Proof of \Cref{sec:SMD} (approximate SMD)}\label{app:SMD}
The following lemma characterizes the effect of a mirror step on the Bregman divergence. We use this lemma to show an \rev{optimality gap} bound on SMD with the inexact stochastic oracle $\stocG(x,\xi)$ similarly to \cite{nemirovski2009robust}, where the bound depends on the variance of the stochastic component $\Delta(x,\xi)$ and the norm of the inexactness $U(x)$.
\begin{lemma}[{\citealt[Lem.\ 2.1]{nemirovski2009robust}}]\label{lem:BregmanMDBound}
    For any $u, x \in \mathcal{X}$ and $y \in (\R^n)^*$, we have (where $\psi$ is $\alpha$-strongly convex),
    \begin{equation}\label{eq:nemiEq233}
        B_\psi(u, P_x(y)) \le B_\psi(u,x) + \langle y, u-x\rangle + \frac{\|y\|_*^2}{2\alpha}.
    \end{equation}
\end{lemma}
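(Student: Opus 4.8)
The plan is to follow the standard ``prox-inequality'' argument, working directly with the minimizer $w := P_x(y)$. Since $\psi$ is $\alpha$-strongly convex, the map $x' \mapsto \langle y, x'\rangle + B_\psi(x', x)$ is strongly convex and hence admits a unique minimizer, so $w$ is well-defined. First I would record the first-order optimality condition for $w$. In the unconstrained setting $\mathcal{X} = \R^n$ adopted here, differentiating $\langle y, x'\rangle + \psi(x') - \psi(x) - \langle \nabla\psi(x), x'-x\rangle$ in $x'$ and setting the gradient to zero at $w$ yields the clean identity
\begin{equation*}
    \nabla\psi(w) - \nabla\psi(x) = -y.
\end{equation*}
In a general constrained case one would instead obtain the variational inequality $\langle y + \nabla\psi(w) - \nabla\psi(x), w - u\rangle \le 0$ for all $u \in \mathcal{X}$, which suffices for the same conclusion; the equality above simply makes the bookkeeping cleaner.

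Next I would invoke the three-point identity for Bregman divergences,
\begin{equation*}
    B_\psi(u, x) - B_\psi(u, w) - B_\psi(w, x) = \langle \nabla\psi(w) - \nabla\psi(x), u - w\rangle,
\end{equation*}
which is a direct expansion of the definition of $B_\psi$ and holds for arbitrary $u, x, w$. Substituting the optimality condition replaces the right-hand side by $\langle -y, u-w\rangle$, and rearranging gives
\begin{equation*}
    B_\psi(u, w) = B_\psi(u, x) + \langle y, u - w\rangle - B_\psi(w, x).
\end{equation*}
Splitting the inner product as $\langle y, u-w\rangle = \langle y, u-x\rangle + \langle y, x-w\rangle$ isolates the desired terms $B_\psi(u,x) + \langle y, u-x\rangle$, leaving the residual $\langle y, x-w\rangle - B_\psi(w,x)$ to be controlled.

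It then remains to bound this residual by $\|y\|_*^2/(2\alpha)$. Here I would use the dual-norm inequality $\langle y, x-w\rangle \le \|y\|_* \|x - w\|$ together with the strong-convexity lower bound $B_\psi(w,x) \ge \frac{\alpha}{2}\|w-x\|^2$, reducing the residual to $\|y\|_*\,t - \frac{\alpha}{2}t^2$ with $t = \|x-w\| \ge 0$. Maximizing this scalar quadratic — equivalently applying Young's inequality $ab \le \frac{\alpha}{2}a^2 + \frac{1}{2\alpha}b^2$ — gives exactly $\|y\|_*^2/(2\alpha)$, which completes the bound. I do not anticipate a genuine obstacle, as every step is either an identity or a one-line estimate; the only point requiring care is the optimality condition, namely using the (sub)gradient characterization of $w$ with the correct sign, and, were one to treat a constrained $\mathcal{X}$, replacing the equality by the appropriate variational inequality tested against the direction $u - w$.
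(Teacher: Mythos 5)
Your proof is correct in every step: the first-order optimality condition $\nabla\psi(w)-\nabla\psi(x)=-y$ (or the variational inequality $\langle y+\nabla\psi(w)-\nabla\psi(x),\,w-u\rangle\le 0$ in the constrained case), the three-point identity, and the final bound $\|y\|_*\,t-\tfrac{\alpha}{2}t^2\le \|y\|_*^2/(2\alpha)$ via strong convexity and Young's inequality all check out, and the residual-splitting $\langle y,u-w\rangle=\langle y,u-x\rangle+\langle y,x-w\rangle$ lands exactly on the claimed inequality. Note that the paper itself gives no proof of this lemma---it is imported verbatim as Lemma~2.1 of Nemirovski et al.\ (2009)---and your argument is essentially the standard derivation found in that reference, so you have correctly reconstructed the proof behind the citation.
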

Applying \Eqref{eq:nemiEq233} with $x=x^{(k)},\, y = t_k \stocG(x^{(k)}, \xi^{(k)})$, given any point $u \in \mathcal{X}$, we have
\begin{equation}
    t_k \langle \stocG(x^{(k)}, \xi^{(k)}),x^{(k)} - u\rangle \le B_\psi(u,x^{(k)}) - B_\psi(u,x^{(k+1)}) + \frac{t_k^2}{2\alpha} \|\stocG(x^{(k)}, \xi^{(k)})\|^2_*.
\end{equation}
Using the definition of $\stocG$, we can expand 
\begin{align} \label{eq:nemiEq237}
\begin{split}
    t_k \langle g(x^{(k)}), x^{(k)} - u\rangle &\le B_\psi(u,x^{(k)}) - B_\psi(u,x^{(k+1)}) + \frac{t_k^2}{2\alpha} \|\stocG(x^{(k)}, \xi^{(k)})\|^2_* \\ &\quad - t_k \langle \Delta_k, x^{(k)} - u\rangle - t_k \langle U_k, x^{(k)} - u\rangle.
    \end{split}
\end{align}
Observe that since $f$ is convex and $g(x) \in \partial f(x)$, we have $\langle g(x^{(k)}), x^{(k)} - u\rangle \ge f(x^{(k)}) - f(u)$. Using this, summing \Eqref{eq:nemiEq237} from 1 to $k$, and noting that $B_\psi \ge 0$, we have
\begin{equation}\label{eq:summedSMD1}
    \sum_{i=0}^k t_i [f(x^{(i)}) - f(u)] \le B_\psi(u,x^{(0)}) + \sum_{i=0}^k \frac{t_i^2}{2\alpha} \|\stocG(x^{(i)}, \xi^{(i)})\|^2_* \\ - \sum_{i=0}^k t_i  \langle \Delta_i, x^{(i)} - u\rangle - \sum_{i=0}^k t_i \langle U_i, x^{(i)} - u  \rangle.
\end{equation}

Observing that $\mathbb{E}[\Delta_i \vert \xi_0,...,\xi_{i-1}] = 0$ and that $x^{(i)}$ lives in the $\sigma$-algebra defined by $\xi_0,...,\xi_{i-1}$, we have that for any $i$, 
\begin{equation*}
    \mathbb{E} [\langle \Delta_i, x^{(i)} - u\rangle] = \mathbb{E}\left[\mathbb{E} \left[\langle \Delta_i, x^{(i)} - u\rangle \mid \xi_0,...,\xi_{i-1}\right]\right] = 0.
\end{equation*}
We can take expectations over $\xi$ to get
\begin{equation*}
    \mathbb{E}\left[\sum_{i=0}^k t_i [f(x^{(i)}) - f(u)]\right] \le B_\psi(u,x^{(0)}) + \sum_{i=0}^k \frac{t_i^2}{2\alpha} \mathbb{E} \left[\|\stocG(x^{(i)}, \xi^{(i)})\|^2_*\right] \\ - \sum_{i=0}^k t_i \mathbb{E} \left[\langle U_i, x^{(i)} - u \rangle\right].
\end{equation*}

Assuming that the stochasticity is bounded
\begin{gather*}
    \mathbb{E}[\|\stocG(x, \xi)\|^2_*] \le \sigma^2 \quad \forall x \in \mathcal{X},
\end{gather*}
this gives an \rev{optimality gap} bound, which can be extended to convergence of the ergodic average.

If $f$ were $\mu$-strongly convex, then we can remove the uniform boundedness assumption since this allows us to control $\|x^{(i)}-u\|$, using the fact that $\langle g(x^{(k)}), x^{(k)} - u\rangle \ge f(x^{(k)}) - f(u) + \mu \|x^{(k)}-u\|^2/2$. \Eqref{eq:summedSMD1} instead becomes
\begin{multline}\label{eq:summedSMD2}
    \sum_{i=0}^k t_i [f(x^{(i)}) - f(u)] \le B_\psi(u,x^{(0)}) + \sum_{i=0}^k \frac{t_i^2}{2\alpha} \|\stocG(x^{(i)}, \xi^{(i)})\|^2_* \\ - \sum_{i=0}^k t_i  \langle \Delta_i, x^{(i)} - u \rangle - \sum_{i=0}^k t_i \left[\langle U_i, x^{(i)} - u  \rangle + \frac{\mu}{2}\|x^{(i)}-u\|^2\right].
\end{multline}
Taking expectations as before and using Young's inequality on the final term, we get
\begin{equation*}
    \mathbb{E}\left[\sum_{i=0}^k t_i [f(x^{(i)}) - f(u)]\right] \le B_\psi(u,x^{(0)}) + \sum_{i=0}^k \frac{t_i^2}{2\alpha} \mathbb{E} \left[\|\stocG(x^{(i)}, \xi^{(i)})\|^2_*\right] + \sum_{i=0}^k \frac{t_i}{2\mu}\|U_i\|_*^2.
\end{equation*}

\section{Proof of \Cref{thm:ASMD} (approximate ASMD)}\label{app:ASMD}
\begin{theorem}
    Suppose $f$ has $L_f$-Lipschitz gradient, the mirror map is $\alpha$-strongly convex, the approximation error $U^{(k)}$ is bounded, and that the stochastic oracle is otherwise unbiased with bounded second moments. Assume the diameter of $\mathcal{X}$ is finite, that is, there exists a constant $M_\psi>0$ such that
    \[M_\psi = \sup_{x,x' \in \mathcal{X}} B_\psi(x, x').\]
    Suppose further that there exists a constant $K$ such that for every iterate $x^{(k)}$, we have
    \[\langle \nabla f(x^{(k)}), U^{(k)} \rangle \le K.\]
    Then the convergence rate of approximate ASMD is (where the right hand side is also given by \Eqref{eq:ASMDFatBound}),
    \begin{equation*}
        \mathbb{E}[f(x^{(k)}) - f(x^*)] \le K + \mathcal{O}(k^{-2} + k^{-1/2}).
    \end{equation*}
\end{theorem}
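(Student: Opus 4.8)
The plan is to follow the exact accelerated stochastic mirror descent analysis of \citet{xu2018accelerated} for the energy $\mathcal{E}^{(k)} = \mathbb{E}[A_k(f(x^{(k)}) - f^*) + s_k B_\psi(x^*, \nabla\psi^*(y^{(k)}))]$, and to track the additional error introduced by the inexact conjugate $U^{(k)}$ through the optimality conditions \Cref{eq:optiCond1,eq:optiCond2}. First I would establish a one-step inequality bounding $\mathcal{E}^{(k+1)} - \mathcal{E}^{(k)}$. For the function-value part, I apply the $L_f$-smoothness descent (\Cref{lem:AMDLem4}) at $x^{(k+1)}$ together with convexity, and use the convex-combination structure of Step 3 of \Cref{alg:ASMD} to express $x^{(k+1)} - x^{(k)}$ via $\tau_k$ and $\nabla\psi^*(y^{(k)})$; this produces a term proportional to $(A_{k+1}-A_k)\langle\nabla f(x^{(k+1)}), \nabla\psi^*(y^{(k)}) - x^*\rangle$. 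For the Bregman part, I treat the dual update \Cref{eq:optiCond2} as a prox-mapping step and apply \Cref{lem:BregmanMDBound} with $u = x^*$, yielding $s_k B_\psi(x^*, \nabla\psi^*(y^{(k+1)})) \le s_k B_\psi(x^*, \nabla\psi^*(y^{(k)})) + (A_{k+1}-A_k)\langle\stocG, x^* - \nabla\psi^*(y^{(k)})\rangle + \frac{(A_{k+1}-A_k)^2}{2\alpha s_k}\|\stocG\|_*^2$.

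The second step is to combine these two inequalities. Writing $\stocG = \nabla f(x^{(k+1)}) + \Delta_{k+1}$, the $\nabla f$ cross-terms from the two parts are designed to cancel once the coefficients $A_k, s_k, \tau_k$ are matched, leaving only the mean-zero stochastic term $(A_{k+1}-A_k)\langle\Delta_{k+1}, x^* - \nabla\psi^*(y^{(k)})\rangle$. The crucial modification for the approximate scheme is to substitute \Cref{eq:optiCond1}, $\nabla\psi^*(y^{(k)}) = x^{(k+1)} + \frac{A_k}{A_{k+1}-A_k}(x^{(k+1)} - x^{(k)}) - U^{(k)}$, wherever $\nabla\psi^*(y^{(k)})$ appears in the cancelling cross-terms; the $-U^{(k)}$ contribution leaves a residual $(A_{k+1}-A_k)\langle\nabla f(x^{(k+1)}), U^{(k)}\rangle$, which by the standing hypothesis is bounded by $(A_{k+1}-A_k)K$.

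Third, I take conditional expectations. Since $\nabla\psi^*(y^{(k)})$ and $x^{(k+1)}$ are measurable with respect to the history before $\xi^{(k+1)}$ and $\mathbb{E}[\Delta_{k+1}] = 0$, the stochastic cross-term vanishes in expectation. The quadratic term $\|\stocG\|_*^2$ I bound by splitting $\|\stocG\|_*^2 \le 2\|\nabla f(x^{(k+1)})\|_*^2 + 2\|\Delta_{k+1}\|_*^2$, then using $\nabla f(x^*)$ as a reference point, the $L_f$-Lipschitz gradient, the strong-convexity estimate $\|x^{(k+1)} - x^*\|^2 \le 2B_\psi(x^*, x^{(k+1)})/\alpha$, and the diameter bound $B_\psi \le M_\psi$ from \Cref{assmp:ASMDBdd}; together with $\mathbb{E}\|\Delta_{k+1}\|_*^2 \le \sigma^2$ this is what yields the bracket $8L_f^2 M_\psi + 2\alpha\sigma^2 + 4\|\nabla f(x^*)\|_*^2$ over $\alpha^2$. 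I would also absorb the increment $s_{k+1} > s_k$ by bounding the leftover $(s_{k+1}-s_k)B_\psi(x^*, \nabla\psi^*(y^{(k+1)})) \le (s_{k+1}-s_k)M_\psi$.

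Finally, telescoping $\mathcal{E}^{(k+1)} - \mathcal{E}^{(k)} \le (A_{k+1}-A_k)K + (s_{k+1}-s_k)M_\psi + \frac{(A_{k+1}-A_k)^2}{2\alpha s_k}\,(\text{const})$ from $0$ to $k-1$ gives $\mathcal{E}^{(k)} \le \mathcal{E}^{(0)} + A_k K + s_k M_\psi + (\text{const})\sum_j \frac{(A_{j+1}-A_j)^2}{2\alpha s_j}$; using $A_{j+1}-A_j = j+1$ and $s_j = (j+1)^{3/2}$, the last sum is $\mathcal{O}(k^{3/2})$. Dividing by $A_k$ and invoking $\mathcal{E}^{(k)} \ge A_k\mathbb{E}[f(x^{(k)}) - f^*]$ (from $B_\psi \ge 0$) produces $\mathbb{E}[f(x^{(k)}) - f^*] \le K + \mathcal{E}^{(0)}/A_k + \frac{(k+1)^{3/2}}{A_k}[\ldots] = K + \mathcal{O}(k^{-2} + k^{-1/2})$. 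The main obstacle will be the coefficient bookkeeping in the first two steps: verifying that with the specific schedule $A_{k+1} = (k+1)(k+2)/2$ and $s_k = (k+1)^{3/2}$ the $\nabla f$ cross-terms cancel exactly, and that the inexactness $U^{(k)}$ enters only through the controllable inner product $\langle\nabla f, U\rangle$ rather than contaminating that cancellation, so that the residual error terms telescope at the claimed $\mathcal{O}(k^{-2})$ and $\mathcal{O}(k^{-1/2})$ rates.
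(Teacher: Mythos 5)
Your proposal is correct and follows essentially the same route as the paper's proof: the same energy $\mathcal{E}^{(k)}$, the same substitution of the perturbed optimality condition \Cref{eq:optiCond1} so that the inexactness enters only through $(A_{k+1}-A_k)\langle \nabla f(x^{(k+1)}), U^{(k)}\rangle \le (A_{k+1}-A_k)K$, the same conditional-expectation argument killing the $\Delta_{k+1}$ cross-term, the same diameter/smoothness bound producing the bracket over $\alpha^2$, and the same telescoping with $A_j = j(j+1)/2$, $s_j = (j+1)^{3/2}$. The only cosmetic deviations are that you invoke \Cref{lem:BregmanMDBound} for the Bregman recursion where the paper uses the three-point identity plus the $\frac{1}{2\alpha}\|y^{(k+1)}-y^{(k)}\|_*^2$ bound (these are the same computation), and you cite the smoothness descent lemma in the function-value step where plain convexity suffices.
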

\begin{proof}
Recall the definition of the energy,
\begin{equation}
    \mathcal{E}^{(k)} = \mathbb{E}[A_k(f(x^{(k)}) - f(x^*)) + s_k B_\psi(x^*, \nabla \psi^*(y^{(k)}))].
\end{equation}
Using the following identity (which can be shown by expanding the Bregman distances using their definitions and using $\nabla\psi^* = (\nabla\psi)^{-1}$)
\begin{equation*}
    B_\psi(x^*, \nabla \psi^*(y^{(k+1)})) - B_\psi(x^*, \nabla \psi^*(y^{(k)})) - B_\psi(\nabla \psi^*(y^{(k)}), \nabla \psi^*(y^{(k+1)})) = \langle y^{(k)} - y^{(k+1)}, x^* - \nabla \psi^*(y^{(k)})\rangle,
\end{equation*}
we compute the difference in energy
\begin{align*}
    \mathcal{E}^{(k+1)} - \mathcal{E}^{(k)} &= 
    \mathbb{E}[A_{k+1}(f(x^{(k+1)}) - f(x^*)) - A_k(f(x^{(k)}) - f(x^*))] + \mathbb{E}[(s_{k+1}-s_k)B_\psi(x^*, \nabla \psi^*(y^{(k+1)}))] \\
        & \qquad + s_k \mathbb{E}[B_\psi(x^*, \nabla \psi^*(y^{(k+1)})) - B_\psi(x^*, \nabla \psi^*(y^{(k)}))] \\
    &= \mathbb{E}[A_{k+1}(f(x^{(k+1)}) - f(x^*)) - A_k(f(x^{(k)}) - f(x^*))] + \mathbb{E}[(s_{k+1}-s_k)B_\psi(x^*, \nabla \psi^*(y^{(k+1)}))] \\
        & \qquad + s_k \mathbb{E}[B_\psi(\nabla \psi^*(y^{(k)}), \nabla \psi^*(y^{(k+1)})) + \langle y^{(k)} - y^{(k+1)}, x^* - \nabla \psi^*(y^{(k)})\rangle] \\
    &= \mathbb{E}[A_{k+1}(f(x^{(k+1)}) - f(x^*)) - A_k(f(x^{(k)}) - f(x^*))] + \mathbb{E}[(s_{k+1}-s_k)B_\psi(x^*, \nabla \psi^*(y^{(k+1)}))] \\
        & \qquad + \mathbb{E}[s_kB_\psi(\nabla \psi^*(y^{(k)}), \nabla \psi^*(y^{(k+1)})) + (A_{k+1} - A_k)\langle \stocG(x^{(k+1)}, \xi^{(k+1)}), x^* - \nabla \psi^*(y^{(k)})\rangle]
\end{align*}
where the second equality comes from the above identity, and the third equality from the definition of $y^{(k)}$. We further compute using the optimality conditions in \Cref{eq:optiCond1,eq:optiCond2} and expanding $\stocG(x^{(k+1)}, \xi^{(k+1)}) = \nabla f(x^{(k+1)})+\Delta_{k+1}$: 
\begin{align*}
    \mathcal{E}^{(k+1)} - \mathcal{E}^{(k)} &= \mathbb{E}[A_{k+1}(f(x^{(k+1)}) - f(x^*)) - A_k(f(x^{(k)}) - f(x^*))] + \mathbb{E}[(s_{k+1}-s_k)B_\psi(x^*, \nabla \psi^*(y^{(k+1)}))] \\
        & \qquad + \mathbb{E}[s_kB_\psi(\nabla \psi^*(y^{(k)}), \nabla \psi^*(y^{(k+1)}))] \\
        & \qquad + \mathbb{E}[(A_{k+1}-A_k) \langle \nabla f(x^{(k+1)}), x^* - x^{(k+1)} \rangle - A_k \langle \nabla f(x^{(k+1)}), x^{(k+1)} - x^{(k)} \rangle] \\
        & \qquad + \mathbb{E}[(A_{k+1} - A_k) \langle \Delta_{k+1}, x^* - \nabla \psi^*(y^{(k)})\rangle] \\
        & \qquad + \mathbb{E}[(A_{k+1}-A_k) \langle \stocG(x^{(k+1)}, \xi^{(k+1)}), U^{(k)}\rangle] \\
    &\le \mathbb{E}[A_{k+1}(f(x^{(k+1)}) - f(x^*)) - A_k(f(x^{(k)}) - f(x^*))] + \mathbb{E}[(s_{k+1}-s_k)B_\psi(x^*, \nabla \psi^*(y^{(k+1)}))] \\
        & \qquad + \mathbb{E}[s_kB_\psi(\nabla \psi^*(y^{(k)}), \nabla \psi^*(y^{(k+1)}))] \\
        & \qquad + \mathbb{E}[(A_{k+1}-A_k) (f(x^*)-f(x^{(k+1)})) + A_k (f(x^{(k)}) - f(x^{(k+1)})) \rangle] \\
        & \qquad + \mathbb{E}[(A_{k+1} - A_k) \langle \Delta_{k+1}, x^* - \nabla \psi^*(y^{(k)})\rangle] \\
        & \qquad + \mathbb{E}[(A_{k+1}-A_k) \langle \stocG(x^{(k+1)}, \xi^{(k+1)}), U^{(k)}\rangle] \\
    &= \mathbb{E}[(s_{k+1}-s_k)B_\psi(x^*, \nabla \psi^*(y^{(k+1)}))] + \mathbb{E}[s_kB_\psi(\nabla \psi^*(y^{(k)}), \nabla \psi^*(y^{(k+1)}))] \\
        & \qquad + \mathbb{E}[(A_{k+1}-A_k) \langle \nabla f(x_{k+1}), U^{(k)}\rangle]
\end{align*}
where the second inequality follows from convexity of $f$, and the final equality from zero mean of $\Delta_{k+1}$ and independence with $y^{(k)}, U^{(k)}$.

Recalling that since $\psi$ is $\alpha$-strongly convex, we have that $\psi^*$ is $1/\alpha$-strongly smooth. Using this, we have the following bound on the Bregman divergence: 


\begin{align*}
    B_\psi(\nabla \psi^*(y^{(k)}), \nabla \psi^*(y^{(k+1)})) &= B_{\psi^*} (y^{(k+1)}, y^{(k)}) \le \frac{1}{2\alpha} \|y^{(k+1)} - y^{(k)}\|_*^2 \\
    & \le \frac{8L_f^2 M_\psi + 2\alpha \sigma^2 + 4\|\nabla f(x^*)\|_*^2}{\alpha} \frac{(A_{k+1}-A_k)^2}{2\alpha s_k^2},
\end{align*}
where the final iterate comes from the fact that for any $x \in \mathcal{X}$, $\|\nabla f(x)\|_* \le \frac{L_f\sqrt{2M_\psi}}{\sqrt{\alpha}} + \|\nabla f(x^*)\|_*$, which is derived from the smoothness of $f$, strong convexity of $\psi$ and bounded domain assumption. Furthermore, we have $B_\psi(x^*, \nabla \psi^*(y^{(k+1)}) \le M_\psi$. Further assume that $\langle \nabla f(x^{(k+1)}), U^{(k)}\rangle$ is bounded, say by $K$. Substituting back we have
\begin{align*}
    \mathcal{E}^{(k+1)} - \mathcal{E}^{(k)} &\le M_\psi(s_{k+1} - s_k) + \frac{4L_f^2 M_\psi + \alpha \sigma^2 + 2\|\nabla f(x^*)\|_*^2}{\alpha^2} \frac{(A_{k+1}-A_k)^2}{s_k} + |A_{k+1}-A_k| K.
\end{align*}
Assuming that $A_k$ is monotonically increasing (which will be justified later), we can sum from $0$ to $k-1$ and get
\begin{align*}
    \mathcal{E}^{(k)} \le \mathcal{E}^{(0)} + M_\psi s_k + \frac{4L_f^2 M_\psi + \alpha \sigma^2 + 2\|\nabla f(x^*)\|_*^2}{\alpha^2} \sum_{j=0}^{k-1} \frac{(A_{j+1}-A_j)^2}{s_j} \\ + A_k K.
\end{align*}
Taking $A_j = j(j+1)/2$ (which is monotonically increasing) and $s_j = (j+1)^{3/2}$, we get 
\begin{align*}
    \mathcal{E}^{(k)} &\le \mathcal{E}^{(0)} + M_\psi (k+1)^{3/2} + \frac{4L_f^2 M_\psi + \alpha \sigma^2 + 2\|\nabla f(x^*)\|_*^2}{\alpha^2} \sum_{j=0}^{k-1} \sqrt{j+1} + A_k K \\
    & \le \mathcal{E}^{(0)} + M_\psi (k+1)^{3/2} + \frac{8L_f^2 M_\psi + 2\alpha \sigma^2 + 4\|\nabla f(x^*)\|_*^2}{\alpha^2} (k+1)^{3/2} + A_k K.
\end{align*}
where we use the following lemma for divergent series for the final inequality.
\begin{lemma}[{\citealt{chlebus2009approximate}}]
    For $p<0$, 
    \begin{equation}
        1+\frac{k^{1-p}-1}{1-p} \le \sum_{j=1}^k j^{-p} \le \frac{(k+1)^{1-p}-1}{1-p}
    \end{equation}
\end{lemma}

Dividing throughout by $A_k$, we get
\begin{align}
    \mathbb{E}[f(x^{(k)}) - f(x^*)] &\le \mathcal{E}^{(0)}/A_k + \frac{(k+1)^{3/2}}{A_k} \left[M_\psi  + \frac{8L_f^2 M_\psi + 2\alpha \sigma^2 + 4\|\nabla f(x^*)\|_*^2}{\alpha^2} \right] + K \label{eq:ASMDFatBound}\\ &= K + \mathcal{O}(k^{-2} + k^{-1/2}).\notag
\end{align}
\end{proof}

\section{Proofs in \Cref{sec:equivariant}}
\subsection{Proof of \Cref{prop:GDEquivariant}}\label{app:GDEquivariant}
\setcounter{proposition}{1}
\begin{proposition}
    If $\Omega_0$ is gradient descent on the $G$-invariant objective function $L$, then it is $G$-equivariant, i.e. $\Omega_0(g\cdot z) = g \cdot [\Omega_0(z)]$ for all $g \in G, z \in \mathcal{Z}$. 
\end{proposition}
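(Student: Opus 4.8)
The plan is to reduce the claim for gradient descent to a single equivariance property of the gradient map, namely $\nabla L(g \cdot z) = g \cdot \nabla L(z)$, and then verify that property directly from the $G$-invariance of $L$. Writing $\Omega_0(z) = z - \eta \nabla L(z)$ and using linearity of the action (so that $g \cdot (z - \eta w) = g\cdot z - \eta\, g \cdot w$), I would first observe that
\[
g \cdot \Omega_0(z) = g \cdot z - \eta\, g \cdot \nabla L(z), \qquad \Omega_0(g \cdot z) = g \cdot z - \eta \nabla L(g \cdot z).
\]
Hence the two sides agree for every $z$ if and only if $\nabla L(g \cdot z) = g \cdot \nabla L(z)$, so the whole proposition collapses to establishing equivariance of the gradient.

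To establish that, I would differentiate the invariance identity $L(g \cdot z) = L(z)$ in an arbitrary direction $v \in \mathcal{Z}$. Using linearity of the action inside the difference quotient, $g \cdot (z + tv) = g \cdot z + t\, (g \cdot v)$, the chain rule gives
\[
\langle \nabla L(g \cdot z), g \cdot v \rangle = \langle \nabla L(z), v \rangle \qquad \text{for all } v \in \mathcal{Z}.
\]
Here the left-hand side is the directional derivative of $z \mapsto L(g \cdot z)$ (obtained via the Riesz representation of the derivative $DL(g\cdot z)$), and the right-hand side is the directional derivative of $L$ itself; invariance forces them to coincide.

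Next I would invoke the adjoint assumption $\langle g \cdot a, b \rangle = \langle a, g^{-1} \cdot b \rangle$ to move the group action off of $v$: rewriting $\langle \nabla L(g \cdot z), g \cdot v \rangle = \langle g^{-1} \cdot \nabla L(g \cdot z), v \rangle$, the identity becomes
\[
\langle g^{-1} \cdot \nabla L(g \cdot z), v \rangle = \langle \nabla L(z), v \rangle \qquad \text{for all } v \in \mathcal{Z}.
\]
Since this holds for every $v$, nondegeneracy of the inner product yields $g^{-1} \cdot \nabla L(g \cdot z) = \nabla L(z)$, i.e.\ $\nabla L(g \cdot z) = g \cdot \nabla L(z)$, which is precisely the gradient equivariance needed to close the argument from the first paragraph.

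I expect the only delicate point to be the careful bookkeeping in the second and third steps: one must apply linearity of the action when passing the perturbation $tv$ through $g$, correctly identify each side of the differentiated identity with an inner product against the appropriate gradient via Riesz representation, and then apply the adjoint in the right direction. The differentiability of $L$ (assumed, since $\Omega_0$ is gradient descent on $L$) makes these manipulations legitimate, and no further structure beyond linearity of the action, the adjoint property, and invariance of $L$ is required.
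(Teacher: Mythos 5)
Your proposal is correct and follows essentially the same route as the paper's proof: differentiate the invariance identity $L(g\cdot z)=L(z)$ to get $\langle \nabla L(g\cdot z), g\cdot v\rangle = \langle \nabla L(z), v\rangle$, use the adjoint property to move the action off $v$, conclude gradient equivariance $\nabla L(g\cdot z)=g\cdot \nabla L(z)$ via Riesz/nondegeneracy, and finish by linearity of the action applied to the gradient-descent update. The only difference is presentational—the paper phrases the first step through the derivative operator $DL$ and the canonical isomorphism $\iota:\mathcal{Z}^*\to\mathcal{Z}$ while you work directly with gradients and inner products, and you state the reduction to gradient equivariance up front rather than at the end—but the mathematical content is identical.
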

\begin{proof}
We first check that $\nabla L$ commutes with $g$ actions. For any $y \in \mathcal{Z}$, the derivative operator $DL: \mathcal{Z} \rightarrow \mathcal{Z}^*$ satisfies 
\begin{align*}
    D L(g \cdot z)(y) &= \lim_{t \rightarrow 0} \frac{L(g \cdot z + ty) - L(g \cdot z)}{t} \\
    &= \lim_{t \rightarrow 0} \frac{L(z - t g^{-1} \cdot y)- L(z)}{t}\\
    &= D L(z) (g^{-1}y),
\end{align*}
where the second inequality holds since $L(g\cdot z) = L(z)$, applied with $g^{-1}$ and linearity of $g$ actions. Let $\iota$ denote the canonical isomorphism $\iota: \mathcal{Z}^* \rightarrow \mathcal{Z}$. For any $y \in \mathcal{Z}$, 
\begin{align*}
    \langle \iota^{-1} \nabla L(g \cdot z), y \rangle &=  DL(g \cdot z)(y)\\
    &= DL(z) (g^{-1} \cdot y) \\
    &= \langle \iota^{-1} \nabla L(z), g^{-1} \cdot y \rangle \\
    &= \langle g \cdot \iota^{-1} \nabla L(z), y \rangle.
\end{align*}

Hence, $G$ commutes with $\iota^{-1} \nabla L = DL$. We now have that
\begin{align*}
    \Omega_0(g \cdot z) &= g \cdot z - \eta (\nabla L)(g \cdot z) \\
    &= g \cdot z - \eta g \cdot \nabla L(z)\\
    &= g \cdot (z - \eta g \nabla L(z))\\
    &= g \cdot \Omega_0(z),
\end{align*}
where the first equality comes from the fact that $g$ trivially commutes with $\iota$ (and thus $\iota^{-1}$) using the induced group action, and the second and third equality comes from the linearity of $g$ actions. Since this is true for all $y \in \mathcal{Z}$, by the Riesz representation theorem, we have $\nabla L(g \cdot z) = g \cdot \nabla L(z)$.
\end{proof}

\section{Step-size comparison for baseline methods}\label{app:baseline}
Here we detail the step-sizes used to compare. In general, we choose the best performing method at 100 iterations,with preference given to the one converging to a smaller value at higher iterations in case of ambiguity. Note later iteration cutoffs usually mean insignificant performance at earlier iterations. The grid used for the step-sizes are of the form $\{1,2,5\} \times 10^{-k}$. The endpoints are

\begin{itemize}
    \item \textbf{GD} $[2\times 10^{-4}, 1\times 10^{-1}]$
    \item \textbf{Adam} $[1\times 10^{-3}, 1\times 10^{-1}]$
    \item \textbf{Nesterov} $[1\times 10^{-4}, 5\times 10^{-3}]$
\end{itemize}
\Cref{fig:ssTest} shows the loss evolution for the proposed step-sizes up to 5000 iterations. We note that the grid is sufficiently wide such that smaller or larger step-sizes are suboptimal, or lead to slow convergence in the low iteration count setting. The specific choices are as below.




\textbf{Image denoising.} GD: $1\times 10^{-2}$. Adam: $2\times 10^{-3}$. Nesterov: $1\times 10^{-3}$.

\textbf{Image inpainting.} GD: $1\times 10^{-2}$. Adam: $1\times 10^{-2}$. Nesterov: $2 \times 10^{-3}$.

\textbf{SVM training.} GD: $1\times 10^{-2}$. Adam: $2\times 10^{-2}$. Nesterov: $5\times 10^{-3}$.

\begin{figure}[H]
    \centering
    \subfloat[\centering GD (denoise)]{{\includegraphics[width=0.32\linewidth,trim={0.9cm 0 0.9cm 0},clip]{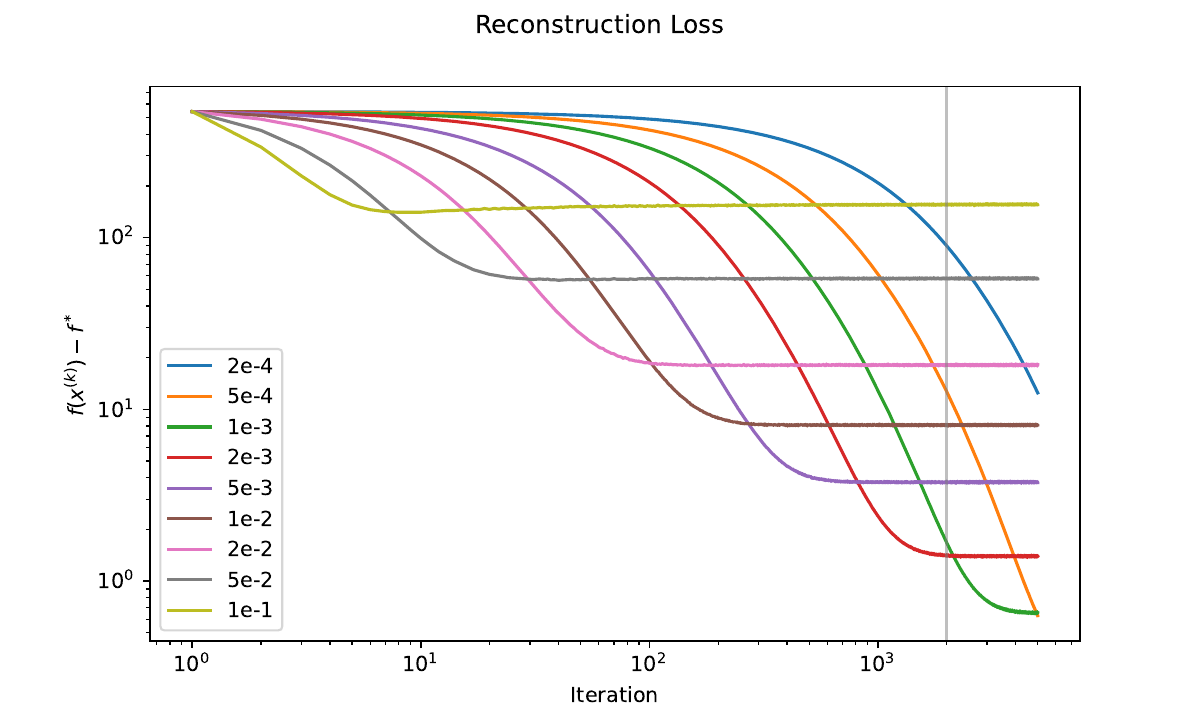}}}%
    \subfloat[\centering GD (inpaint)]{{\includegraphics[width=0.32\linewidth,trim={0.9cm 0 0.9cm 0},clip]{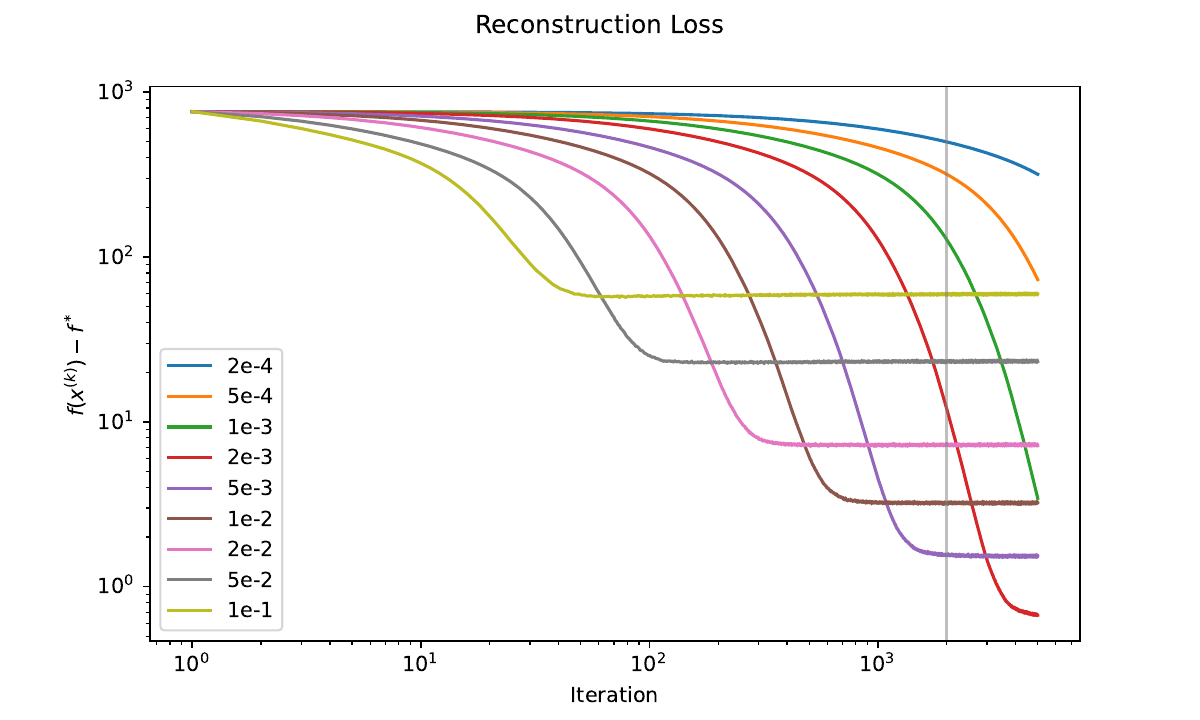}}}%
    \subfloat[\centering GD (SVM)]{{\includegraphics[width=0.32\linewidth,trim={0.9cm 0 0.9cm 0},clip]{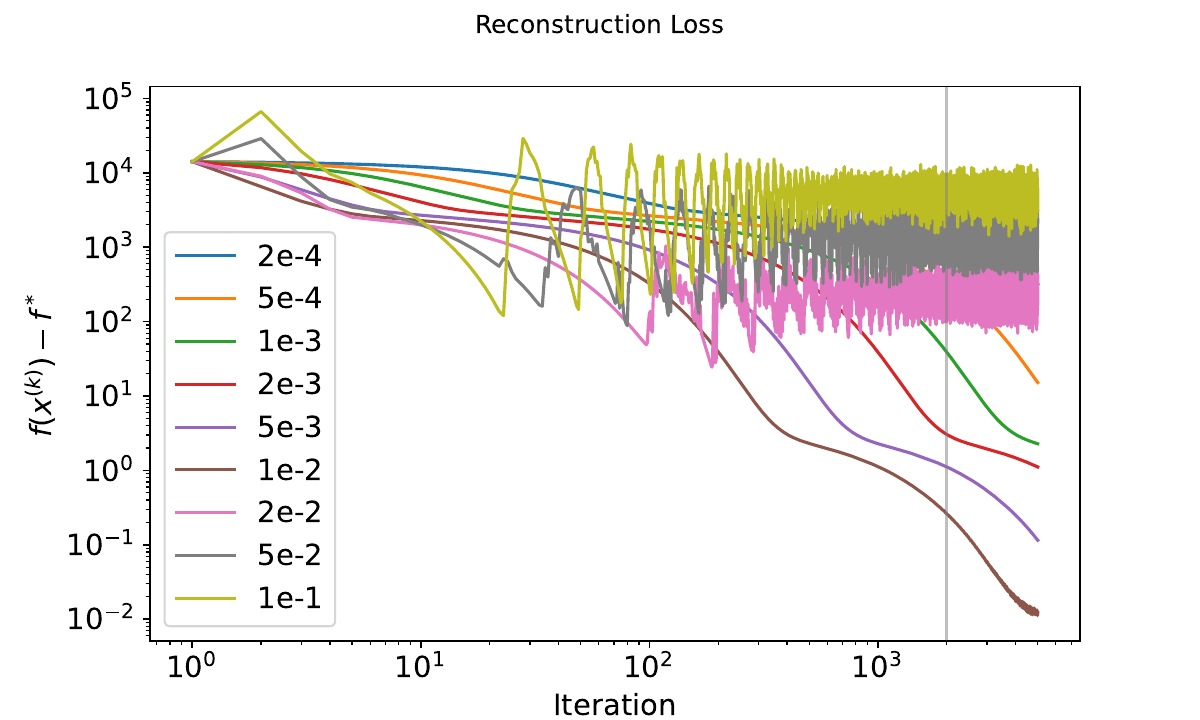}}}
    
    \subfloat[\centering Adam (denoise)]{{\includegraphics[width=0.32\linewidth,trim={0.9cm 0 0.9cm 0},clip]{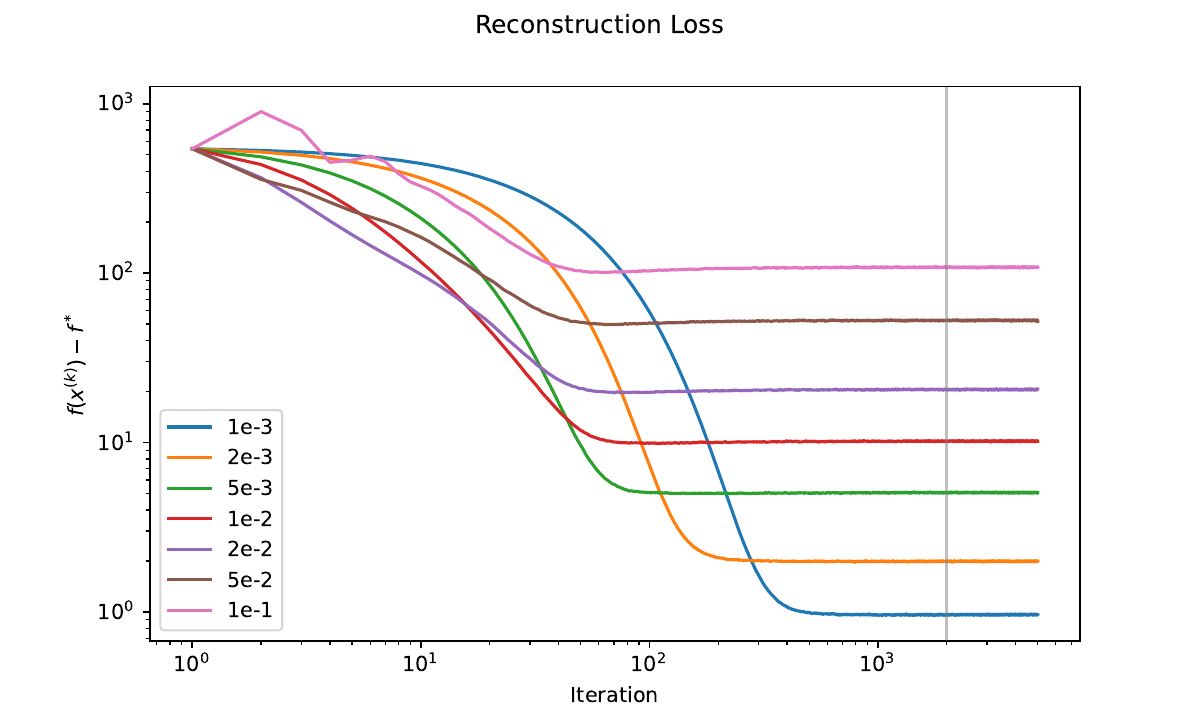}}}%
    \subfloat[\centering Adam (inpaint)]{{\includegraphics[width=0.32\linewidth,trim={0.9cm 0 0.9cm 0},clip]{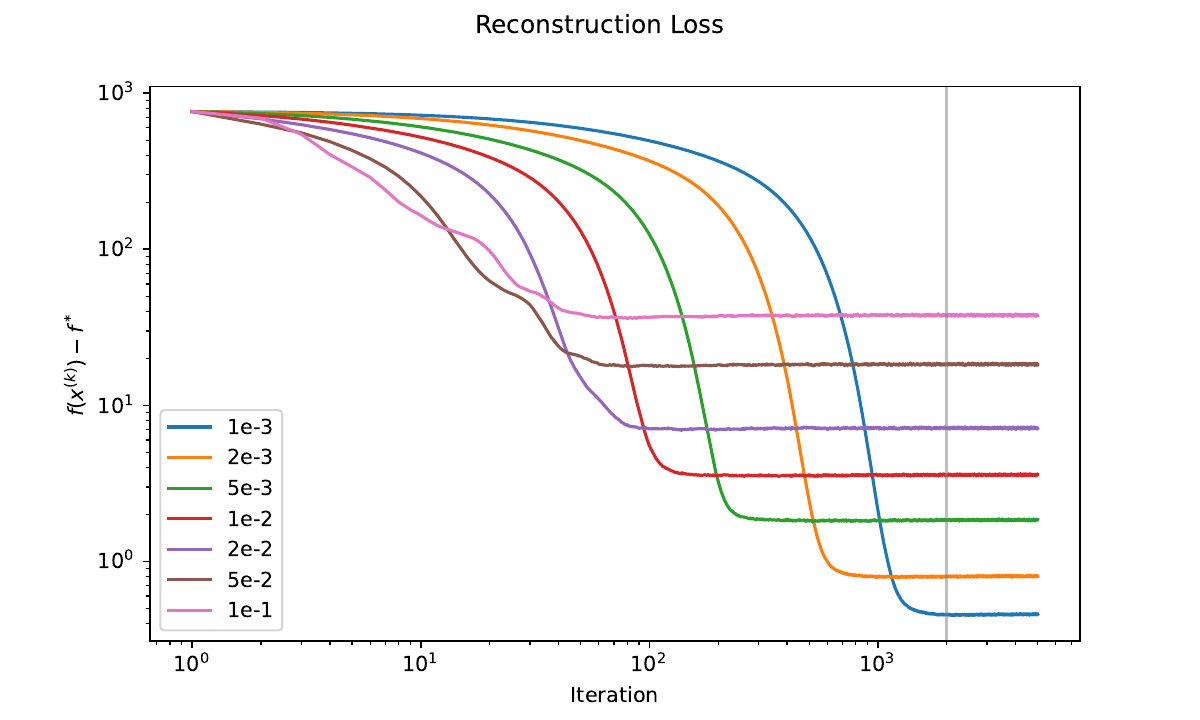}}}%
    \subfloat[\centering Adam (SVM)]{{\includegraphics[width=0.32\linewidth,trim={0.9cm 0 0.9cm 0},clip]{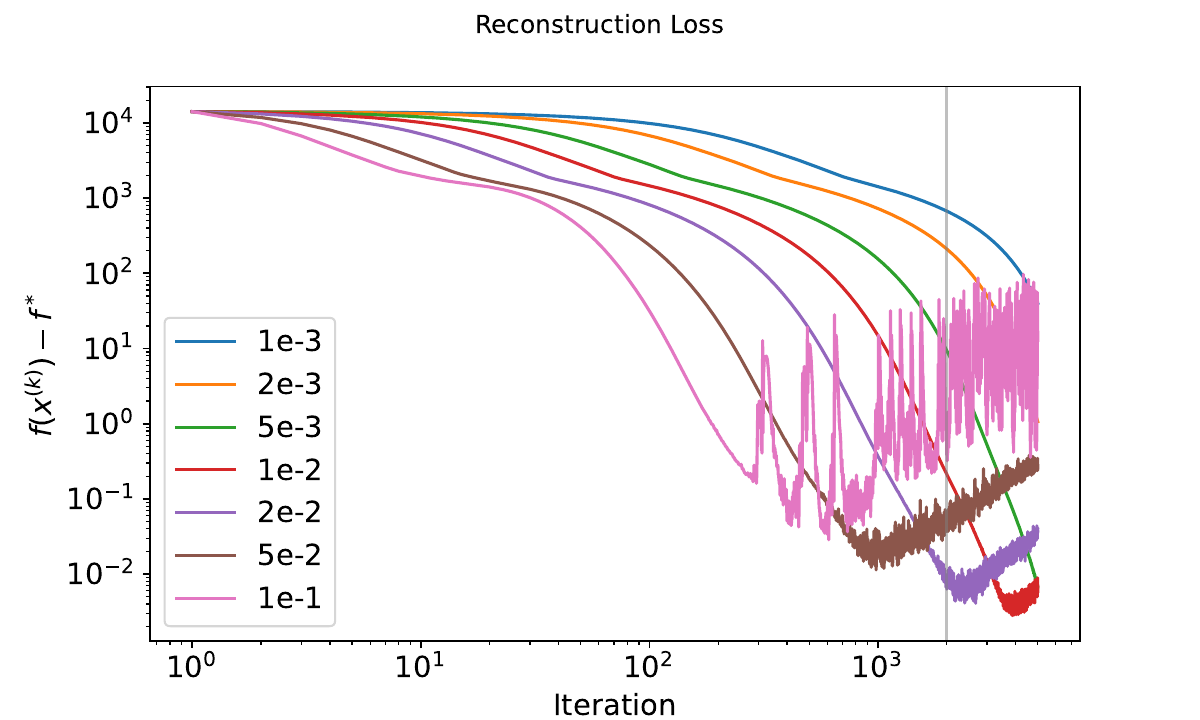}}}

    \subfloat[\centering Nesterov (denoise)]{{\includegraphics[width=0.32\linewidth,trim={0.9cm 0 0.9cm 0},clip]{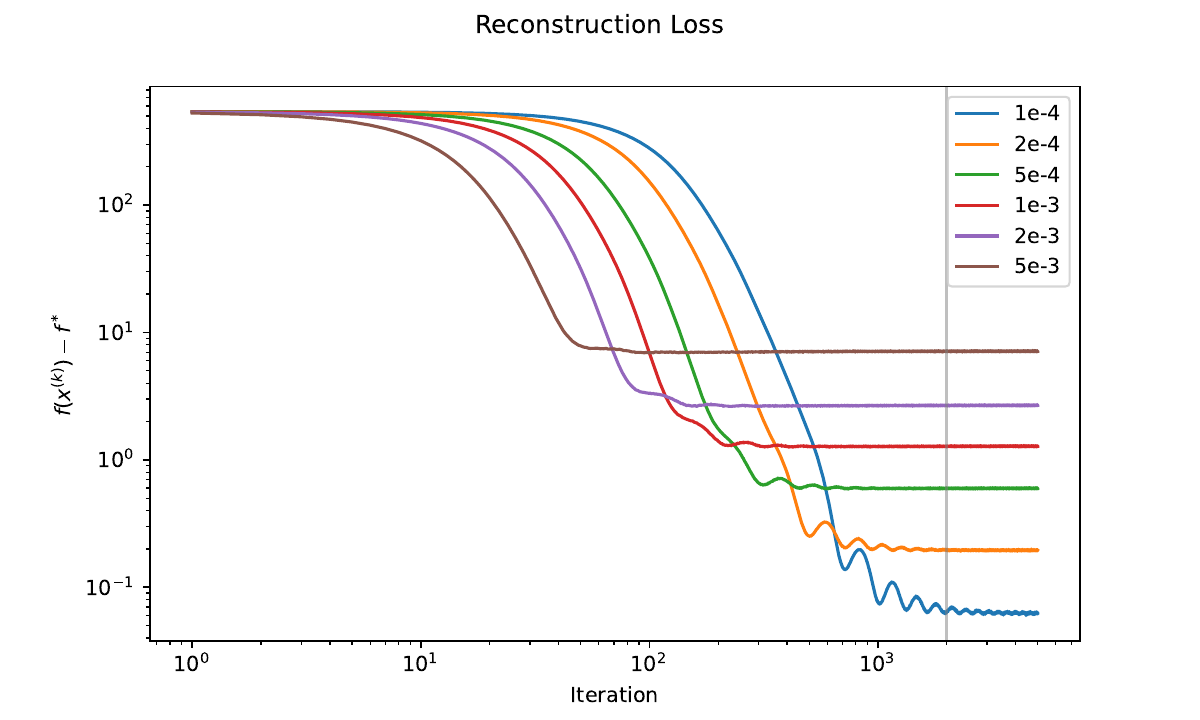}}}%
    \subfloat[\centering Nesterov (inpaint)]{{\includegraphics[width=0.32\linewidth,trim={0.9cm 0 0.9cm 0},clip]{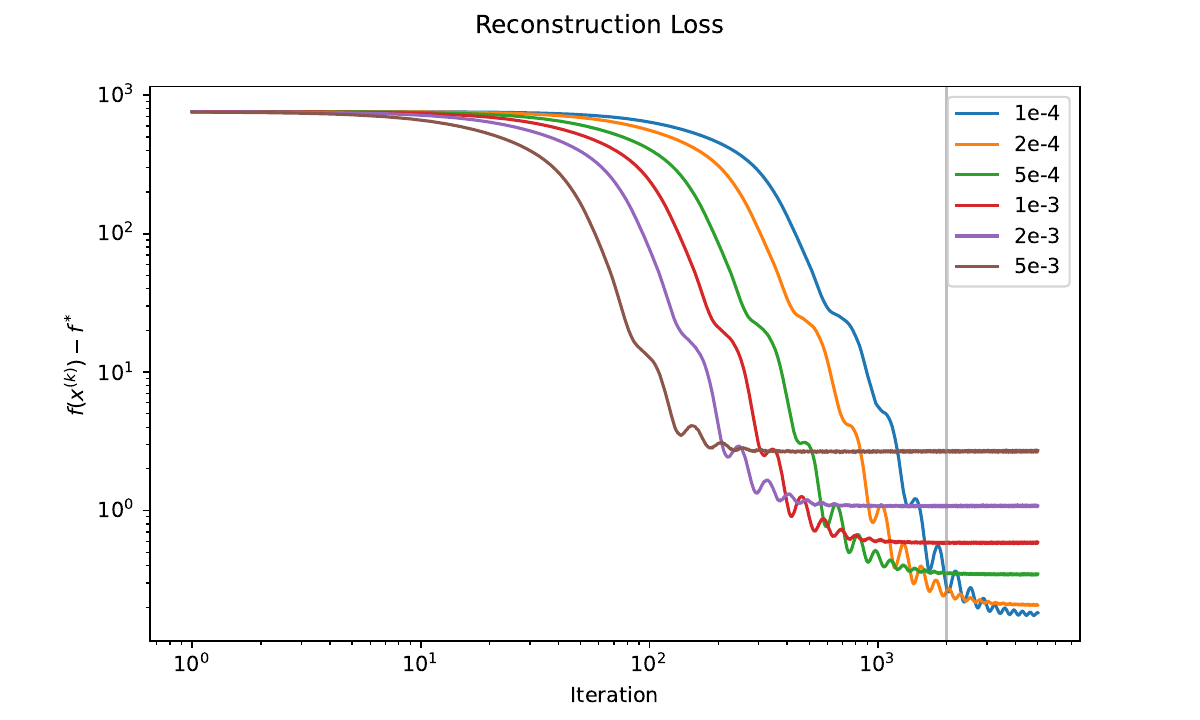}}}%
    \subfloat[\centering Nesterov (SVM)]{{\includegraphics[width=0.32\linewidth,trim={0.9cm 0 0.9cm 0},clip]{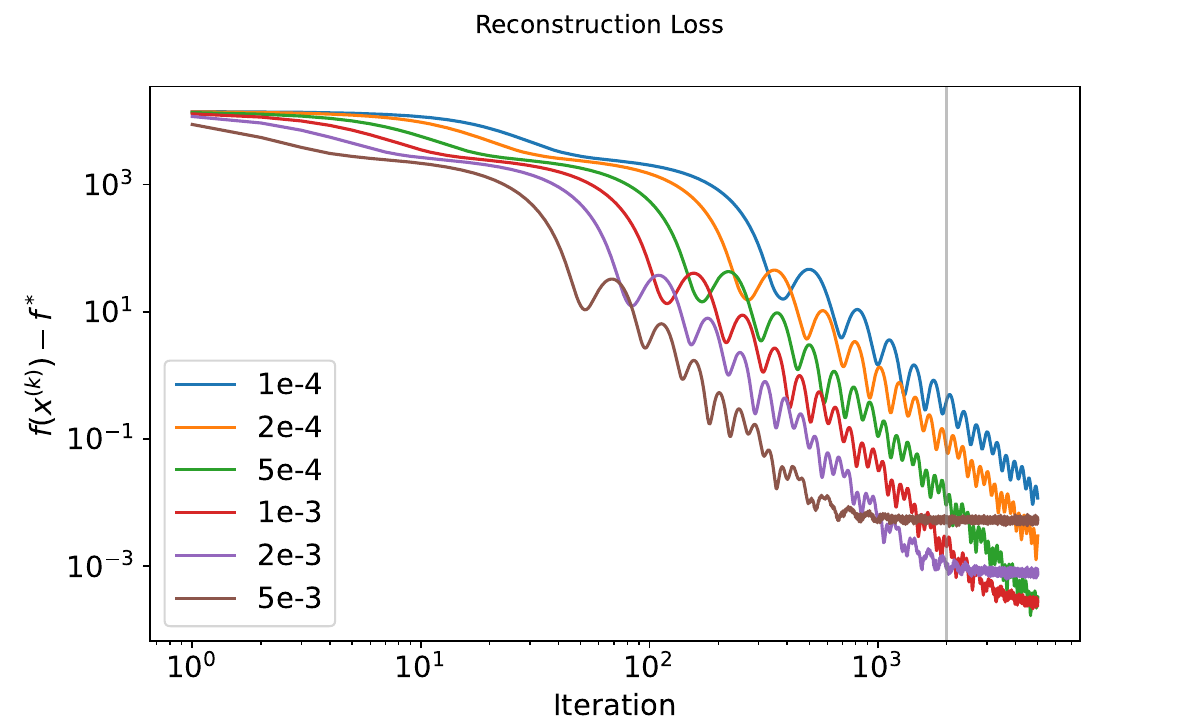}}}
    \caption{Comparison of the baseline methods with various step-sizes for the ellipse denoising and image inpainting tasks. Vertical gray line represents iteration 2000, which is the maximum iteration considered in the main text figures.}\label{fig:ssTest}

\end{figure}

\section{Comparison of loss against time}
We consider the image inpainting experiment and plot against the wall-clock time instead of the iteration count. In \Cref{fig:vs_time}, we consider various different step-sizes for the baselines GD, Nesterov accelerated GD, and Adam, as chosen in \Cref{app:baseline}. Each solid line corresponds to a different choice of step-size. We observe that the baseline methods plateau after a period of fast optimization, while the proposed LAMD method is able to continue decreasing.

\begin{figure}[H]
    \centering
    \includegraphics[width=0.8\linewidth]{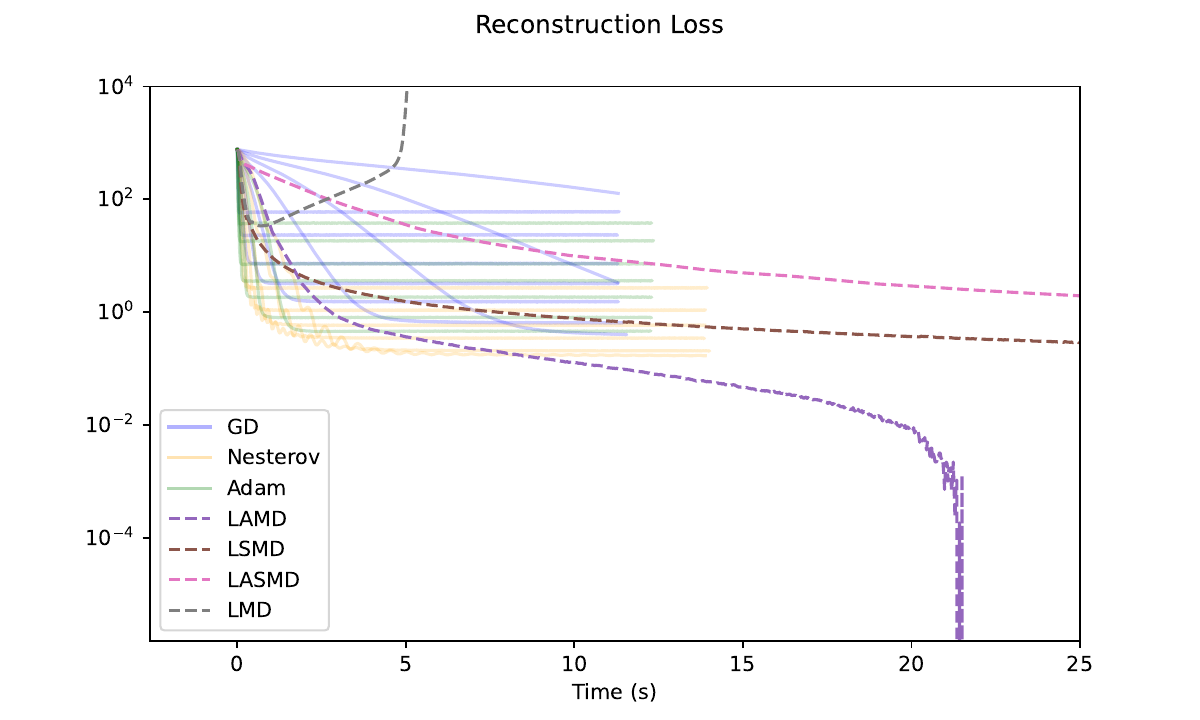}
    \caption{Plot of reconstruction loss against wall-clock time for image inpainting experiment.}
    \label{fig:vs_time}
\end{figure}

In \Cref{fig:vs_time_cnn}, we plot the training loss of CNN training on MNIST as in \Cref{ssec:CNN}. Due to the compact parameterization, we observe that the diagonal LMD methods introduce minor overheads compared to the baseline SGD and Adam methods, which have been optimized in the PyTorch framework, retaining its competitive nature when plotting against iteration count. 

\begin{figure}[H]
    \centering
    \includegraphics[width=0.8\linewidth]{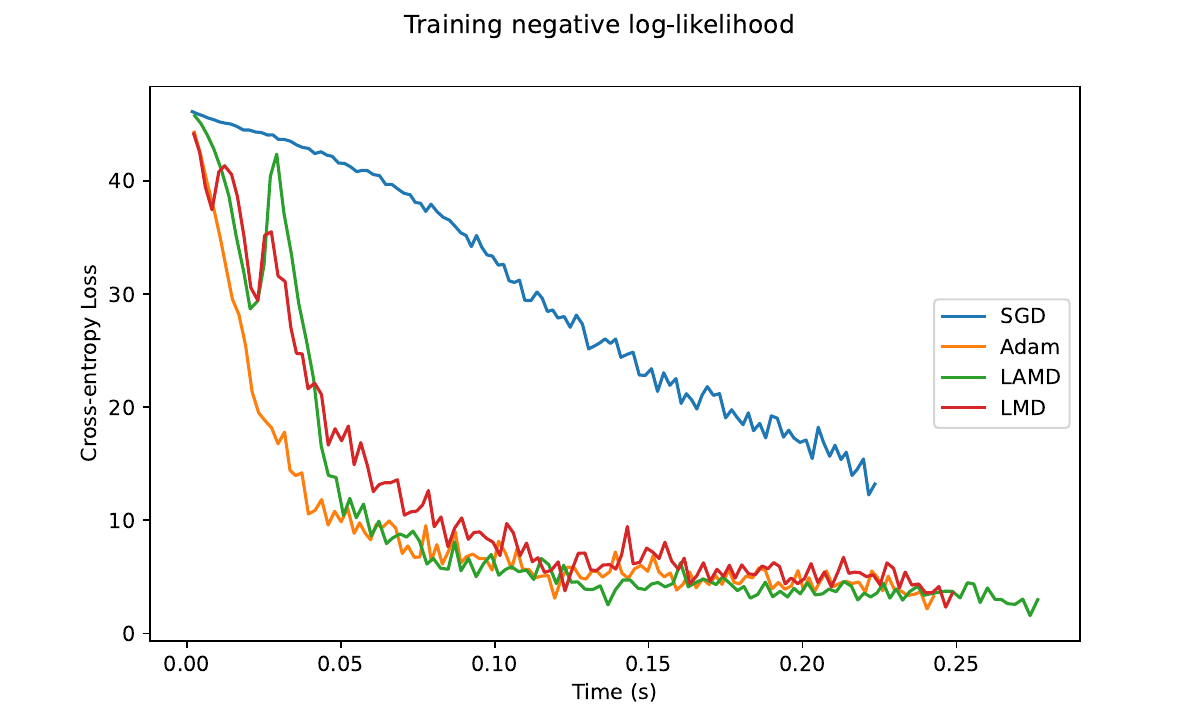}
    \caption{Plot of training cross-entropy loss against wall-clock time when training CNN on MNIST. The wall-clock time includes only the optimization phases and excludes overhead resulting from testing at each iteration.}
    \label{fig:vs_time_cnn}
\end{figure}

\section{Monotonic spline parameterization}\label{app:mono}
In \Cref{sec:experimentsNN}, we utilize an element-wise spline-based parameterization (shared across parameters of the same layer) of the mirror map. Here we detail the implementation of the spline.

Recall that we define the mirror map (acting element-wise) to be 
\[\psi:\R \rightarrow \R,\, \psi(x) = \int_0^x \sigma(t) \dd t.\]
Here, $\sigma$ is a learnable piecewise-linear monotonic spline, with $\sigma(0) = 0$. Thus $\psi(x)$ is a piecewise quadratic monotonic spline. $\sigma$ can be parameterized in terms of knot locations $t_{-K} < ... < t_0 = 0 < ... < t_K$, as well as the values $c_i = \sigma(t_i)$, with linear interpolation between the intervals (and extrapolation on $(-\infty, t_{-K})$ and $(t_K, +\infty)$). The monotonic condition can be enforced during training by forcing $c_{-K} < c_{-K+1} < ... < c_K$. We note that shifting $\sigma$ up or down by a constant does not change the mirror map dynamics, so without loss of generality $c_0$ can be taken to be $0$.

\begin{equation*}
    \sigma(x) = \begin{cases}
        c_i + \frac{x-t_i}{t_{i+1}-t_i}(c_{i+1}-c_i)&,\,x \in [t_i, t_{i+1}]; \\
        c_{-K+1} - \frac{t_{-K+1}-x}{t_{-K+1}-t_K} (c_{-K+1}-c_{-K})&,\, x < t_{-K}; \\
        c_{K-1} + \frac{x-t_{K-1}}{t_{K}-t_{K-1}}(c_{K}-c_{K-1})&,\, x > t_K.
    \end{cases}
\end{equation*}
In our implementation, we fix the knots as in \cite{goujon2022neural} to be $t_i = 0.05 i,\, i=-20,...,20$, which reduces the number of learnable parameters to only the values of $\sigma$ at the knots, which is $2K=40$. One useful point of this characterization is that the inverse of $\sigma$ can be easily computed by computing the values at the knots $\sigma(t_i)$ and linearly interpolating.

\section{Reptile Initialization for \Cref{ssec:fewshot}}\label{app:reptile}
In \Cref{ssec:fewshot}, we compared LMD with baseline methods from a meta-learning objective. Informally, Reptile can be treated as an algorithm that finds a solution that is close to each task's manifold of optimal solutions. Here we detail how the Reptile initialization is computed for our purpose. The algorithm is as follows:

\begin{algorithm}[H]
\caption{Reptile \citep[Alg. 1]{nichol2018first}}\label{alg:reptile}
\begin{algorithmic}[1]

\State Initialize parameters $\theta$
\For{$k = 1,..., K$} 
    \State Sample task $f \in \mathcal{F}$
    \State Perform $n$ steps of SGD/Adam on $f$ from $\theta$ to obtain parameters $\tilde\theta$
    \State Update initialization $\theta \gets \theta + \epsilon(\tilde\theta - \theta)$
\EndFor
\State \textbf{return} initialization $\theta$
\end{algorithmic}
\end{algorithm}

We consider training on the 5-shot 10-way Omniglot dataset for $K= 5\times 10^5$ meta-epochs. The inner optimization loop in Step 4 is done using full-batch SGD with learning rate $1\times 10^{-2}$ for $n=10$ iterations. The meta-stepsize $\epsilon$ is decreased linearly from $\epsilon=0.1$ to $0$ over the entire training process. This returns the Reptile initialization $\theta_K$.
\end{document}